\newcommand{\query}[1]%
{\mbox{}\marginpar{\raggedright\hspace{0pt}{\small\em #1}}}%
\title[Polyhedral divisors and torus actions over arbitrary fields]{Polyhedral divisors and torus actions of
 complexity one over arbitrary fields}
\author{Kevin Langlois}
\address{Universit\'e
Grenoble I, Institut Fourier, UMR 5582 CNRS-UJF, BP 74,
38402 St.\ Martin
d'H\`eres c\'edex, France}
\email{kevin.langlois@ujf-grenoble.fr}
\begin{document}

\maketitle

\
{\footnotesize
\em Key words: \rm multigraded ring, polyhedral divisor, algebraic torus action.

\
MSC 2010: 14R20 13A02 12F10.}

\begin{abstract}
We show that the presentation of affine $\mathbb{T}$-varieties of complexity one 
in terms of polyhedral divisors holds over an arbitrary field. 
We also describe a class of multigraded algebras over Dedekind domains.
We study how the algebra associated to a polyhedral divisor changes when we extend the scalars. 
As another application, we provide a combinatorial description of 
affine $\mathbf{G}$-varieties of complexity one over a field, where $\mathbf{G}$ is a (not necessarily split) 
torus, by using elementary 
facts on Galois descent. This class of affine $\mathbf{G}$-varieties is described via a new combinatorial
object, which we call (Galois) invariant polyhedral divisor.
\end{abstract}
{\footnotesize \tableofcontents}

\theoremstyle{plain}
\newtheorem{theorem}{Theorem}[section]
\newtheorem{lemme}[theorem]{Lemma}
\newtheorem{proposition}[theorem]{Proposition}
\newtheorem{corollaire}[theorem]{Corollary}
\newtheorem*{theorem*}{Theorem}

\theoremstyle{definition}
\newtheorem{definition}[theorem]{Definition}
\newtheorem{rappel}[theorem]{}
\newtheorem{conjecture}[theorem]{Conjecture}
\newtheorem{exemple}[theorem]{Example}
\newtheorem{notation}[theorem]{Notation}

\theoremstyle{remark}
\newtheorem{remarque}[theorem]{Remark}
\newtheorem{note}[theorem]{Note}

\section*{Introduction}
In this paper, we are interested in a combinatorial description of multigraded normal
affine algebras of complexity one. From a geometrical viewpoint, these algebras are related 
to the classification of algebraic torus actions of complexity one on affine varieties.
Let $k$ be a field. Consider a split algebraic torus $\mathbb{T}$ over $k$. 
Recall that a $\mathbb{T}$-variety is a normal variety over $k$ endowed with an effective 
$\mathbb{T}$-action. There exist several combinatorial descriptions of $\mathbb{T}$-varieties
in term of the convex geometry. See [Dol75, Pin77, Dem88, FZ03] for the Dolgachev-Pinkham-Demazure (D.P.D.) 
presentation, [KKMS73, Tim97, Tim08] for toric case and complexity one case, and [AH06, AHS08, AOPSV12]
for higher complexity. Most classical works on $\mathbb{T}$-varieties require the ground field $k$ to be 
algebraically closed of characteristic zero. It is worthwhile mentioning that 
the description of affine $\mathbb{G}_{m}$-varieties [Dem88] due to Demazure holds over any field.

Let us list the most important results of the paper.

- The Altmann-Hausen presentation of affine $\mathbb{T}$-varieties
of complexity one in terms of polyhedral divisor holds over an arbitrary field,
see Theorem $4.3$. 

- This description holds as well for an important class of multigraded algebras over Dedekind domains, see Theorem $2.5$.

- We study how the algebra associated to a polyhedral divisor changes when we extend the scalars, 
see $2.12$ and $3.9$.

- As another application, we provide a combinatorial description of 
affine $\mathbf{G}$-varieties of complexity one, where $\mathbf{G}$ is a (not necessarily split) 
torus over $k$, by using elementary 
facts on Galois descent. This class of affine $\mathbf{G}$-varieties is classified via a new combinatorial
object, which we call a (Galois) invariant polyhedral divisor, see Theorem $5.10$. 

Let us discuss these results in more details. We start with a simple case of varieties with an 
action of a split torus.
Recall that a split algebraic torus $\mathbb{T}$ of dimension $n$ defined over $k$
is an algebraic group over $k$ isomorphic to $\mathbb{G}_{m}^{n}$, where $\mathbb{G}_{m} = 
\mathbb{G}_{m,k}$ is the multiplicative algebraic group ${\rm Spec}\,k[t,t^{-1}]$.
Let $M = \rm Hom(\it\mathbb{T},\mathbb{G}_{\it m})$ be the
character lattice of the torus $\mathbb{T}$. Then defining a $\mathbb{T}$-action on an affine variety
$X$ is equivalent to fixing an $M$-grading on the algebra $A = k[X]$, where $k[X]$
is the coordinate ring of $X$. Following the classification of affine $\mathbb{G}_{m}$-surfaces [FK91],
we say as in [Lie13, $1.1$] that the $M$-graded algebra $A$ is \em elliptic \rm if the graded piece $A_{0}$ is reduced to $k$. 

Multigraded affine algebras are classified via a numerical invariant called
complexity. This invariant was  introduced in [LV83] for the classification of homogeneous spaces
under the action of a connected reductive group. Consider the field $k(X)$ of rational functions on $X$ and 
its subfield $K_{0}$ of $\mathbb{T}$-invariant functions. The complexity of the $\mathbb{T}$-action on $X$ is the 
transcendence degree of $K_{0}$ over the field $k$. 
Note that for the situation where $k$ is algebraically closed, 
the complexity is also the codimension of 
the general $\mathbb{T}$-orbit in $X$ (see [Ros63]). 

In order to describe affine $\mathbb{T}$-varieties of complexity one, 
we have to consider combinatorial 
objects coming from convex geometry and from the geometry of algebraic curves. Let $C$ be a regular curve 
over $k$. A point of $C$ is assumed to be a closed 
point, and in particular, not necessarily rational. Thus, the residue field extension of $k$ 
at any point of $C$
has finite degree. 

To reformulate our first result, we recall some notation introduced in [AH06, Section $1$].
Denote by $N = \rm Hom(\it\mathbb{G}_{\it m},
\mathbb{T})$ the lattice of one-parameter subgroups of the torus $\mathbb{T}$ which is the dual of the lattice $M$. 
 Let $M_{\mathbb{Q}} = \mathbb{Q}\otimes_{\mathbb{Z}}M$, 
$N_{\mathbb{Q}} = \mathbb{Q}\otimes_{\mathbb{Z}}N$ be the associated dual $\mathbb{Q}$-vector spaces
of $M,N$, respectively, and let $\sigma\subset N_{\mathbb{Q}}$ be a strongly convex polyhedral cone. We can define 
as in [AH06] a Weil divisor $\mathfrak{D} = \sum_{z\in C}\Delta_{z}\cdot z$ with $\sigma$-polyhedral 
coefficients in $N_{\mathbb{Q}}$, called polyhedral divisor of Altmann-Hausen. More precisely,
each $\Delta_{z}\subset N_{\mathbb{Q}}$ is a polyhedron with a tail cone $\sigma$ (see $2.1$) and 
$\Delta_{z} = \sigma$ for all but finitely many points $z\in C$. Denoting by $\kappa_{z}$ the residue 
field of the point $z\in C$ and by $[\kappa_{z}:k]\cdot \Delta_{z}$ the
image of $\Delta_{z}$ under the homothety of ratio $[\kappa_{z}:k]$,
the sum
\begin{eqnarray*}
\rm deg\,\it\mathfrak{D} = \sum_{z\in C}[\kappa_{z}:k]\cdot\rm\Delta_{\it z} 
\end{eqnarray*}
is a polyhedron in $N_{\mathbb{Q}}$. This sum may be seen as the finite Minkowski sum of all polyhedra 
$[\kappa_{z}:k]\cdot\Delta_{z}$ 
different from $\sigma$. Considering the dual cone $\sigma^{\vee}\subset M_{\mathbb{Q}}$ of $\sigma$,
we define an evaluation function  
\begin{eqnarray*}
\sigma^{\vee}\rightarrow \rm Div_{\it\mathbb{Q}}(\it C),\,\,\,
m\mapsto \mathfrak{D}(m) = \sum_{z\in C}\min_{l\in\rm\Delta_{\it z}\it}\,\langle m, l\it\rangle\cdot z 
\end{eqnarray*}
with values in the vector space $\rm Div_{\it\mathbb{Q}}(\it C)$ of Weil
$\mathbb{Q}$-divisors over $C$. As in the classical case [AH06, $2.12$] we introduce the technical
condition of properness for the polyhedral divisor $\mathfrak{D}$ (see $2.2$, $3.4$, $4.2$) that we recall
thereafter. 

\begin{definition}
A $\sigma$-polyhedral divisor $\mathfrak{D} = \sum_{z\in C}\Delta_{z}\cdot z$ 
is called \em proper \rm if it
satisfies one of the following conditions.
\begin{enumerate}
\item[\rm (i)]
$C$ is affine. 
\item[\rm (ii)]
$C$ is projective and $\rm deg\it\, \mathfrak{D}$
is strictly contained in the cone $\sigma$. Furthermore, 
if $\rm deg\it\, \mathfrak{D}(m) \rm = 0$, then $m$ belongs 
to the boundary of $\sigma^{\vee}$ and some non-zero 
 integral multiple of $\mathfrak{D}(m)$ is principal. 
\end{enumerate}
\end{definition}
For instance, if $C = \mathbb{P}^{1}_{k}$ is the projective line, then the polyhedral divisor 
$\mathfrak{D}$ is proper if and only if $\rm deg\,\it \mathfrak{D}$ is strictly included in $\sigma$.

For every affine variety $X$ with an effective $\mathbb{T}$-action,
we will call \emph{multiplicative system} of $k(X)$ a sequence $(\chi^{m})_{m\in M}$,
where each $\chi^{m}$ is a homogeneous element of $k(X)$ of degree $m$ satisfying the conditions
$\chi^{m}\cdot \chi^{m'} = \chi^{m + m'}$ for all $m,m'\in M$, and $\chi^{0} = 1$.
One of the main results of this paper can be stated as follows.
\begin{theorem}
Let $k$ be a field.
\begin{enumerate}
\item[\rm (i)] If $\mathfrak{D}$ is a proper $\sigma$-polyhedral 
divisor on a regular curve $C$ over $k$, then the $M$-graded algebra $A[C,\mathfrak{D}] = \bigoplus_{m\in\sigma^{\vee}\cap M}A_{m},$
where $$A_{m} = H^{0}(C,\mathcal{O}_{C}(\lfloor \mathfrak{D}(m)\rfloor)),$$
is the coordinate ring of an affine $\mathbb{T}$-variety of complexity one over $k$.
\item[\rm (ii)] Conversely, to any affine $\mathbb{T}$-variety $X = \rm Spec\,\it A$ of complexity one over $k$,
one can associate a pair $(C_{X}, \mathfrak{D}_{X,\gamma})$ as follows.
\begin{enumerate}
\item[(a)] $C_{X}$ is the abstract regular curve over $k$ defined by the conditions 
$k[C_{X}] = k[X]^{\mathbb{T}}$ and $k(C_{X}) = k(X)^{\mathbb{T}}$. 
\item[(b)] $\mathfrak{D}_{X,\gamma}$ is a proper $\sigma_{X}$-polyhedral divisor 
over $C_{X}$, which is uniquely determined by $X$ and by a multiplicative system $\gamma = (\chi^{m})_{m\in M}$ of $k(X)$.
\end{enumerate}
We have a natural identification $A = A[C_{X},\mathfrak{D}_{X,\gamma}]$ of $M$-graded algebras
with the property that every homogeneous element $f\in A$ of degree $m$ is equal to $f_{m}\chi^{m}$, 
for a unique global section $f_{m}$ of the sheaf $\mathcal{O}_{C_{X}}(\lfloor \mathfrak{D}_{X,\gamma}(m)\rfloor)$.
\end{enumerate} 
\end{theorem}
In the proof of assertion $\rm (ii)$, we use an effective calculation from [Lan13]. 
We divide the proof into two cases. In the \em non-elliptic case \rm we show that the assertion
holds more generally in the context of Dedekind domains. More precisely, we give a perfect dictionary
similar to $0.2 \rm (i),(ii)$ for $M$-graded algebras defined by a polyhedral divisor over
a Dedekind ring (see $2.2, 2.3$ and Theorem $2.5$). We deal in $2.6$ with an example 
of a polyhedral divisor over $\mathbb{Z}[\sqrt{-5}]$. In the \em elliptic case, \rm 
we consider an elliptic $M$-graded algebra $A$ over $k$ satisfying the 
assumptions of $0.2\,\rm (ii)$. By a well-known result (see [EGA II, $7.4$]),
we can construct a regular projective curve arising from the 
algebraic function field $K_{0} = (\rm Frac\,\it A)^{\mathbb{T}}$. In this construction, 
the points of $C$ are identified with the places of $K_{0}$. 
Then we show that the $M$-graded algebra is described by a polyhedral divisor 
over $C$ (see Theorem $3.5$).

Let us pass further to the general case of varieties with an action of a non necessarily split
torus. The reader may consult [Bry79, CTHS05, Vos82, ELST12]
for the theory of non-split toric varieties and [Hur11] for the spherical embeddings.
Let $\mathbf{G}$ be a torus over $k$; then $\mathbf{G}$ splits in a finite Galois
extension $E/k$. Let $\rm Var_{\it \mathbf{G}, E}(\it k)$ be the category
of affine $\mathbf{G}$-varieties of complexity one splitting in $E/k$ (see $5.4$).
For an object $X\in\rm Var_{\it \mathbf{G}, E}(\it k)$ let
$[X]$ be the isomorphism class and $X_{E} = X\times_{\rm Spec\,\it k}\rm Spec\,\it E$
be its extension of $X$ over the field extension. Fixing $X\in\rm Var_{\it \mathbf{G}, E}(\it k)$, 
as an application of our previous results, we study the pointed set
\begin{eqnarray*}
\left(\left\{[Y]\,|\, Y\in {\rm Var}_{\mathbf{G}, E}(k) \,\,\, {\rm and}\,\,\,X_{E}
\simeq_{{\rm Var}_{\mathbf{G}, E}(E)}
Y_{E}\right\},[X]\right) 
\end{eqnarray*}
of isomorphism classes of $E/k$-forms of $X$ that is in bijection with the first pointed 
set $H^{1}(E/k, \rm Aut_{\it\mathbf{G}_{E}}(\it X_{E}\rm ))$ of non abelian Galois cohomology.
By elementary arguments (see $5.7$) the latter pointed sets are described by all possible
homogeneous semi-linear $\mathfrak{G}_{E/k}$-actions on the multigraded algebra $E[X_{E}]$, 
where $\mathfrak{G}_{E/k}$ is the Galois group of $E/k$.
Translating this to the language of polyhedral divisors, we obtain a combinatorial description 
of $E/k$-forms of $X$, see Theorem $5.10$. This theorem can be viewed as a first step towards the study of the forms of 
$\mathbf{G}$-varieties of complexity one.

Let us give a brief summary of the contents of each section. In the first section, we recall how to extend the D.P.D.
presentation of parabolic graded algebra to the context of Dedekind domain.
 This fact has been mentioned in [FZ03] and 
firstly treated by Nagat Karroum
in a master dissertation [Kar04]. 
In the second and the third sections, 
we study respectively a class of multigraded algebras over Dedekind domains and a class of elliptic
multigraded algebras over a field. In the fourth section, we classify 
affine $\mathbb{T}$-varieties of complexity one. 
The last section is devoted to the non-split case.

\begin{rappel}
All considered rings are assumed to be commutative and unitary.
Let $k$ be a field. Given a lattice $M$ we let $k[M]$ be
the semigroup algebra 
\begin{eqnarray*}
\bigoplus_{m\in M}k\chi^{m},\,\,\,{\rm where}\,\,\,\chi^{m+m'}=\chi^{m}\cdot \chi^{m'}\,\,\,{\rm and}\,\,\,\chi^{0} = 1.
\end{eqnarray*}
Recall that a \em $\mathbb{Q}$-divisor \rm on a scheme $Y$ is a Weil divisor on $Y$ 
with rational coefficients. 
By a \em variety \rm $X$ over $k$ we mean an integral
separated scheme of finite type over $k$; one assumes in addition that $k$ is 
algebraically closed in the field of rational functions $k(X)$. In particular, $X$ is geometrically
irreducible.
\end{rappel}
\em  Acknowledgments. \rm The author is grateful to Karol Palka and Mikhail Zaidenberg for his remarks which helped 
to improve the text. We would like to thank Matthieu Romagny for kindly 
answering our questions, and Hanspeter Kraft for proposing to treat the non-split case.
We also thank the jury members of the Ph.D. thesis and the referee for many suggestions and corrections.
\section{Graded algebras over Dedekind domains}
In this section, we recall how to generalize 
the Dolgachev-Pinkham-Demazure (D.P.D.)
presentation in [FZ03, Section $3$] to the context
of Dedekind domains (see Lemma $1.6$). This generalization concerns in particular 
an algebraic description of affine normal parabolic complex $\mathbb{C}^{\star}$-surfaces. 
Let us recall the definition of a Dedekind domain.  
\begin{rappel}
An integral domain $A_{0}$ is called a \em Dedekind 
domain \rm (or a Dedekind ring) 
if it is not a field 
and if it satisfies the following conditions.
\begin{enumerate}
\item[(i)] The ring $A_{0}$ is noetherian.

\item[(ii)]The ring $A_{0}$ is integrally closed in 
its field of fractions.

\item[(iii)]Every nonzero prime ideal is a 
maximal ideal.
\end{enumerate}
\end{rappel}
Let us mention several classical examples 
of Dedekind domains.
\begin{exemple}
Let $K$ be a number field.
Then the ring $\mathbb{Z}_{K}$ of integers of $K$ 
is a Dedekind ring. 

Let $A$ be a finitely generated normal algebra of dimension one
over a field $k$. This means that the
scheme $C = \rm Spec\,\it A$ is a regular affine curve.
Then the coordinate ring $A = k[C]$ is Dedekind.

The algebra of power series $k[[t]]$ in one variable over the field $k$ is a Dedekind
domain. More generally, every principal ideal domain (and so every discrete valuation ring) that
is not a field is a Dedekind domain. 
\end{exemple}  
\begin{rappel}
Let $A_{0}$ be an integral domain, and let $K_{0}$ be 
its field of fractions. 
Recall that a \em fractional ideal \rm $\mathfrak{b}$ is a finitely
generated nonzero $A_{0}$-submodule of $K_{0}$. Actually, every
fractional ideal is of the form $\frac{1}{f}\cdot\mathfrak{a}$, 
where $f\in A_{0}$ is nonzero and 
$\mathfrak{a}$ is a nonzero ideal of $A_{0}$. If $\mathfrak{b}$ is
equal to $u\cdot A_{0}$ for some nonzero element $u\in K_{0}$, then we say that 
$\mathfrak{b}$ is a \em principal \rm fractional ideal.
\end{rappel}
The following result
gives a description of fractional ideals of $A_{0}$ in 
terms of Weil divisors
 on $Y = \rm Spec\,\it A_{\rm 0}$, where $A_{0}$ 
is a Dedekind domain.
This assertion is well known, and so the proof is omitted. 
\begin{lemme}
Let $A_{0}$ be a Dedekind ring with field of fractions $K_{0}$. Let
$Y =\rm Spec\,\it A_{\rm 0}$. Then the map
\begin{eqnarray*}
\rm Div_{\it \mathbb{Z}}(\it Y\rm )\rightarrow\rm Id(\it A_{\rm 0}\rm )\it , \,\,
D\mapsto H^{\rm 0}(Y,\mathcal{O}_{Y}( D))
\end{eqnarray*}
is a bijection between the set of integral Weil divisors on $Y$ and the set
of fractional ideals of $A_{0}$. 
Every fractional ideal is locally free of rank $1$ as $A_{0}$-module
and the natural map
\begin{eqnarray*}
H^{\rm 0}(Y,\mathcal{O}_{Y}( D))\otimes H^{\rm 0}(Y,\mathcal{O}_{Y}( D'))\rightarrow 
H^{\rm 0}(Y,\mathcal{O}_{Y}( D+D'))
\end{eqnarray*}
is surjective.
A Weil divisor $D$ on $Y$ is principal {\rm (}resp. effective{\rm )} if and only 
if the corresponding fractional ideal is principal {\rm (}resp. contains $A_{0}${\rm )}.
\end{lemme}
\begin{notation}
Let $A_{0}$ be a Dedekind domain. 
For a $\mathbb{Q}$-divisor $D$ 
on the affine scheme $Y = \rm Spec\,\it A_{\rm 0}$ we denote by $A_{0}[D]$ 
the graded algebra
\begin{eqnarray*}
\bigoplus_{i\in\mathbb{N}}
H^{0}(Y,\mathcal{O}_{Y}(\lfloor iD\rfloor ))\,t^{i},  
\end{eqnarray*}
where $t$ is a variable over the field 
$K_{0}$. Note that
$A_{0}[D]$ is normal as intersection of discrete 
valuation rings with field of fractions $K_{0}(t)$
(see the argument for [Dem88, $2.7$]).
\end{notation}
The next lemma provides
a D.P.D. presentation for a class of
graded subrings of $K_{0}[t]$. It will be
used in the next section. Here we give an elementary proof
using the description in $1.4$ of fractional ideals.
\begin{lemme}
Let $A_{0}$ be a Dedekind ring with the field of fractions $K_{0}$.
Let
\begin{eqnarray*}
A = \bigoplus_{i\in\mathbb{N}}A_{i}\,t^{i}\subset K_{0}[t]
\end{eqnarray*}
be a normal graded subalgebra of finite type over $A_{0}$, where every $A_{i}$
is contained in $K_{0}$. 
Assume that the field of fractions of $A$ is $K_{0}(t)$. 
Then there exists a unique $\mathbb{Q}$-divisor $D$ on 
$Y = \rm Spec\it\,A_{\rm 0}$ such that $A = A_{0}[D]$.
Furthermore we have $Y = \rm Proj\,\it A$.
\end{lemme}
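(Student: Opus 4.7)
\emph{Proof sketch.} The plan is to extract a $\mathbb{Q}$-divisor $D$ on $Y$ from the graded pieces $A_i$ using Theorem $1.4$, and then to verify the floor formula $A_i = H^0(Y, \mathcal{O}_Y(\lfloor iD \rfloor))$ by a normality argument. Each $A_i \subset K_0$ is a finitely generated $A_0$-submodule, since $A$ is of finite type over $A_0$, and the hypothesis $\mathrm{Frac}(A) = K_0(t)$, combined with the normality argument below, forces $A_i \neq 0$ for every $i \geq 0$, so that each $A_i$ is a fractional ideal. Theorem $1.4$ then yields a unique integral Weil divisor $D_i$ on $Y$ with $A_i = H^0(Y, \mathcal{O}_Y(D_i))$, and the inclusion $A_i \cdot A_j \subset A_{i+j}$ translates, via the surjectivity of the tensor-product map and the injectivity of the assignment in Theorem $1.4$, into the superadditivity $D_i + D_j \leq D_{i+j}$.

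Fix a closed point $y \in Y$ and set $a_i(y) = \mathrm{mult}_y D_i$, so that $(a_i(y))_{i \geq 0}$ is superadditive. Taking homogeneous generators $f_1 t^{n_1}, \ldots, f_r t^{n_r}$ of $A$ over $A_0$ with $n_k > 0$, every $f \in A_i$ is an $A_0$-combination of weighted monomials in the $f_k$'s, giving $a_i(y)/i \leq \max_k(-v_y(f_k)/n_k)$; on the other hand $f_k \in A_{n_k}$ provides $a_{n_k}(y)/n_k \geq -v_y(f_k)/n_k$. Hence $d_y := \sup_{i \geq 1} a_i(y)/i$ is rational, finite, attained at some $j_0(y) \in \{n_1, \ldots, n_r\}$, and vanishes outside the finite set where some $f_k$ has a zero or pole. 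Setting $D = \sum_y d_y \cdot y$ then produces a $\mathbb{Q}$-divisor on $Y$ with finite support.

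The remaining and main step is the identity $a_i(y) = \lfloor i d_y \rfloor$. The inequality $\leq$ is immediate from $a_i(y)/i \leq d_y$. For the reverse, assume $a_i(y_0) < \lfloor i d_{y_0} \rfloor$ at some $y_0$ and some $i > 0$. Using Theorem $1.4$ and prime avoidance, I choose $h \in K_0^\times$ with $v_{y_0}(h) = -\lfloor i d_{y_0} \rfloor$, with $v_{y'}(h) \geq -\lfloor i d_{y'} \rfloor$ at every other $y'$ in the support of $D$, and with $v_{y'}(h) \geq 0$ elsewhere. Taking $N$ divisible by $j_0(y')/\gcd(i, j_0(y'))$ for every $y'$ in the finite support of $D$, superadditivity combined with the attainment of $d_{y'}$ at $j_0(y')$ gives $a_{iN}(y') = iN d_{y'} \geq N \lfloor i d_{y'}\rfloor$ at each such $y'$, while $a_{iN}(y') = 0$ at points outside this locus (every generator is a unit there). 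These inequalities together force $h^N \in A_{iN}$, so $(ht^i)^N \in A$, showing that $ht^i$ is integral over $A$ in the fraction field $K_0(t)$. But $v_{y_0}(h) < -a_i(y_0)$ shows $ht^i \notin A$, contradicting normality. Uniqueness of $D$ is then immediate since each $d_y$ is the asymptotic slope of $i \mapsto \mathrm{mult}_y D_i$. Finally, the identification $Y = \mathrm{Proj}\,A$ follows from a direct check on the affine charts $D_+(f_k t^{n_k})$, each of which computes to a principal open of $Y$, and these principal opens cover $Y$. The main obstacle is the third paragraph: the success of the integrality argument hinges on coordinating the choice of $h$ with an exponent $N$ that secures $h^N \in A_{iN}$ simultaneously at every point in the support of $D$, which is precisely where the attainment of $d_y$ (a consequence of finite generation) is essential.
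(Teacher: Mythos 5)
Your reconstruction of $D$ is a genuinely different route from the paper's. The paper also begins with $A_i = H^0(Y,\mathcal{O}_Y(D_i))$ (Theorem $1.4$ plus [GY]), but then invokes [Bou, III.$3$, Prop. $3$] to find $d$ such that the Veronese subalgebra $A^{(d)}$ is generated by $A_dt^d$; this gives $D_{di}=iD_d$ immediately, one sets $D=D_d/d$, and $A=A_0[D]$ follows from the normality sandwich $f\in A_0[D]\Leftrightarrow f^d\in A_0[D]\Leftrightarrow f^d\in A\Leftrightarrow f\in A$ for homogeneous $f$. You instead define $D$ pointwise as the asymptotic slope $d_y=\sup_i a_i(y)/i$ and prove the floor identity by an integrality argument; the attainment of the sup at a generator degree is exactly the formula of Corollary $1.7$, $D=-\min_k \mathrm{div}(f_k)/n_k$, so your proof merges $1.6$ and $1.7$ into one local computation. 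This is more elementary (no Veronese generation result) at the price of the bookkeeping with $h$ and $N$, and the core mechanism (a power of $ht^i$ lands in $A$, normality forces $ht^i\in A$) is sound.

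Two points need repair. First, in the key step the finite set over which you impose the divisibility of $N$ must be the whole zero/pole locus of the generators, not just $\mathrm{supp}\,D$: at a point $y'$ with $d_{y'}=0$ but where some $f_k$ vanishes, every element of $A_{iN}$ has nonnegative valuation, and if no monomial of degree $iN$ can be built from generators that are units at $y'$ then $a_{iN}(y')<0$, so the requirement $v_{y'}(h^N)\ge -a_{iN}(y')$ is strictly positive while your $h$ is only constrained by $v_{y'}(h)\ge 0$ there; the fix is immediate, since your attainment argument also provides $j_0(y')$ with $a_{j_0(y')}(y')=0$ at such points, and imposing $j_0(y')\mid iN$ there gives $a_{iN}(y')=0$. (Similarly, the claim $A_i\neq\{0\}$ for all $i$ deserves its one-line argument, e.g.\ a section of $H^0(Y,\mathcal{O}_Y(\lfloor D_{ei}/e\rfloor))$ has $e$-th power in $A_{ei}$, since your main argument quantifies over $a_i(y)$.) Second, the Proj part is asserted rather than proved, and the specific claim that each chart $D_+(f_kt^{n_k})$ "computes to a principal open of $Y$" is false in general: the degree-zero localization $\bigcup_j A_{jn_k}f_k^{-j}$ is the ring of sections over the open set where the maximum defining $D$ is attained by $f_k$, i.e.\ the complement of finitely many closed points, and over a Dedekind domain with non-torsion class group (say the coordinate ring of a smooth affine curve of positive genus) such a complement need not be principal. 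One must also check that the structure morphism $\mathrm{Proj}\,A\to Y$ restricts on each chart to the corresponding open immersion (or conclude by properness, birationality and normality of $Y$). The paper avoids both issues by first reducing, via the Veronese trick and [Ha, II, Ex. $5.13$], to $A$ generated by $A_1t$, and then trivializing the invertible sheaf $\mathcal{O}_Y(D)$ on genuinely principal opens $Y_{g_e}$, where the chart computation is immediate.
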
 
\begin{proof}
Theorem $1.4$ and Lemma $2.2$ in [GY83]
imply that every nonzero module $A_{i}$ can be written as  
\begin{eqnarray*}
A_{i} = H^{0}(Y,\mathcal{O}_{Y}(D_{i}))
\end{eqnarray*}
for some $D_{i}\in \rm Div_{\it \mathbb{Z}}(\it Y\rm )$.
By Proposition $3$ in [Bou72, III.$3$] 
there exists a positive integer $d$ such 
that the subalgebra
\begin{eqnarray*}
A^{(d)} := \bigoplus_{i\geq 0}A_{di}\,t^{di}
\end{eqnarray*} 
is generated by $A_{d}\,t^{d}$. 
Proceeding by induction, for any $i\in\mathbb{N}$
we have $D_{di} = iD_{d}$. Let $D = D_{d}/d$.
Then using the normality of $A$ and $A_{0}[D]$, we obtain 
for any homogenous element $f\in K_{0}[t]$ the
equivalences
\begin{eqnarray*}
f\in A_{0}[D]\Leftrightarrow f^{d}\in A_{0}[D]  
\Leftrightarrow f^{d}\in A \Leftrightarrow f\in A.
\end{eqnarray*}
This yields $A = A_{0}[D]$.

Let $D'$ be another $\mathbb{Q}$-divisor on $Y$ such that
$A = A_{0}[D']$. Comparing the graded pieces of
$A_{0}[D]$ and of $A_{0}[D']$, it follows that 
$\lfloor iD\rfloor = \lfloor iD'\rfloor$ for any $i\in\mathbb{N}$.
Hence $D = D'$ and so the decomposition is unique.

It remains to show the equality $Y = \rm Proj\,\it A$.
Let $V =  \rm Proj\,\it A$. By Exercice $5.13$ in [Har77, II]
and Proposition $3$ in [Bou72, III.$1$],
we may assume that $A = A_{0}[D]$ is generated by
$A_{1}t$. Since the sheaf $\mathcal{O}_{Y}(D)$ is 
locally free of rank one over $\mathcal{O}_{Y}$, there
exist $g_{1},\ldots,g_{s}\in A_{0}$ such that
\begin{eqnarray*}
Y = \bigcup_{j = 1}^{s}Y_{g_{j}},\,\,\,\rm  where\,\,\, 
\it Y_{g_{j}} = \rm Spec\,\it (A_{\rm 0\it})_{g_{j}}
\end{eqnarray*}
and such that for $e = 1,\ldots ,s$,
\begin{eqnarray*}
A_{1}\otimes_{A_{0}} (A_{0})_{g_{e}} =
 \mathcal{O}_{Y}(D)(Y_{g_{e}}) = h_{e}\cdot A_{0} 
\end{eqnarray*}
for some $h_{e}\in K_{0}^{\star}$. Let $\pi:V \rightarrow Y$ be
the natural morphism induced by the inclusion $A_{0}\subset A$.
The preimage of the open subset $Y_{g_{e}}$ under 
$\pi$ is
\begin{eqnarray*}
\rm Proj \it\, A\otimes_{A_{\rm 0\it}} (A_{\rm 0\it})_{g_{e}} =
\rm Proj \it\, (A_{\rm 0\it })_{g_{e}}
[A_{\rm 1 \it}\otimes_{A_{\rm 0\it}} (A_{\rm 0\it})_{g_{e}}t] =
\rm Proj \it\, (A_{\rm 0\it})_{g_{e}}[h_{e}t] = Y_{g_{e}}.
\end{eqnarray*}
Hence $\pi$ is the identity map and so $Y =V$, as required.  
\end{proof}
As an immediate consequence we obtain
the following. The reader can see that the proof of [FZ03, $3.9$] is applicable word by word to
positively
graded $2$-dimensional normal algebras of finite type over 
a Dedekind domain. 
\begin{lemme}
Let $A_{0}$ be a Dedekind ring with the field of fractions 
$K_{0}$, and let $t$ be a variable over $K_{0}$. Consider
the subalgebra
\begin{eqnarray*}
A = A_{0}[f_{1}t^{m_{1}},\ldots, f_{r}t^{m_{r}}]\subset K_{0}[t], 
\end{eqnarray*}
where $m_{1},\ldots ,m_{r}$ are positive integers and
$f_{1},\ldots , f_{r}\in K_{0}^{\star}$ are such that 
the field of fractions of $A$ is $K_{0}(t)$. 
Then the normalization of $A$ is equal to $A_{0}[D]$, where 
$D$ is the $\mathbb{Q}$-divisor\footnote{Let $D_{1},\ldots, D_{r}$
be $\mathbb{Q}$-divisors on a scheme $Y$. We define the minimum of $D_{1},\ldots, D_{r}$ by
letting
$$\min_{1\leq 1\leq r}\, D_{i} = \sum_{H\subset Y}\min_{1\leq 1\leq r}\{a_{i, H}\}\cdot H,$$
where for $i = 1,\ldots,r$, the number $a_{i,H}$ is the coefficient of $D_{i}$ corresponding
to the prime divisor $H\subset Y$.} 
\begin{eqnarray*}
 D = -\min_{1\leq i\leq r}\frac{\rm div\,\it f_{i}}{m_{i}}\,.
\end{eqnarray*}
\end{lemme}

\section{Multigraded algebras over Dedekind domains}

Let $A_{0}$ be a Dedekind ring and let $K_{0}$ be
its field of fractions. Given a lattice $M$, 
the purpose of this section 
is to study normal noetherian $M$-graded 
$A_{0}$-subalgebras of $K_{0}[M]$. 
We show below that these subalgebras
admit a description in terms of polyhedral divisors.
We start by recalling some necessary notation from 
convex geometry, see [AH06, Section $1$].
\begin{rappel}
Let $N$ be a lattice, and 
let $M = \rm Hom(\it N,\mathbb{Z}\rm )$ 
be its dual lattice. Denote by $N_{\mathbb{Q}} 
= \mathbb{Q}\otimes_{\mathbb{Z}}N$
and $M_{\mathbb{Q}} = \mathbb{Q}\otimes_{\mathbb{Z}}M$ the 
associated dual $\mathbb{Q}$-linear spaces, respectively. 
For any linear form $m\in M_{\mathbb{Q}}$
and for any vector $v\in N_{\mathbb{Q}}$ set $\left\langle m , v \right\rangle = m(v)$. A polyhedral cone $\sigma\subset N_{\mathbb{Q}}$
is called \em strongly convex \rm if it admits a vertex. 
This is equivalent to saying that the dual cone 
\begin{eqnarray*}
\sigma^{\vee} = \left\{\, m\in M_{\mathbb{Q}}\, | \,\forall v\in\sigma,\, 
\left\langle  m,v \right\rangle \geq 0\,\right\}
\end{eqnarray*}
is of full dimension.

Recall that for a nonzero strongly convex polyhedral cone $\sigma\subset N_{\mathbb{Q}}$
the \em Hilbert basis \rm $\mathscr{H}_{\sigma} = \mathscr{H}_{\sigma, N}$ of $\sigma$ in the 
lattice $N$ is the subset of all irreducible elements
\begin{eqnarray*}
\left\{v\in\sigma_{N}-\{0\}\,|\,\forall v_{1},v_{2}\in\sigma_{N}-\{0\}, v = v_{1}+v_{2}\Rightarrow v = v_{1}
\,\,\,\rm or\,\,\,\it v = v_{\rm 2}\it\right\}\,. 
\end{eqnarray*}
It is known that the set $\mathscr{H}_{\sigma}$ is finite and generates
the semigroup $(\sigma\cap N, +)$. Furthermore, it is minimal for
these latter properties. The cone $\sigma$ is said \em regular \rm
if $\mathscr{H}_{\sigma}$ is contained in a basis of $N$.

Let us fix a strongly convex polyhedral 
cone $\sigma\subset N_{\mathbb{Q}}$.
A subset $Q\subset N_{\mathbb{Q}}$ is a \em polytope \rm
if $Q$ is the convex hull 
of a non-empty finite subset of vectors.
We define $\rm Pol_{\it\sigma\rm}(\it N_{\mathbb{Q}}\rm)$
to be the set of polyhedra which can be written as the 
Minkowski sum $P = Q +\sigma$ with $Q$ a polytope 
of $N_{\mathbb{Q}}$. An element of $\rm Pol_{\it\sigma\rm}(\it N_{\mathbb{Q}}\rm)$
is called a polyhedron with \em tailed cone \rm $\sigma$. 
\end{rappel}
Next we introduce the notion of polyhedral divisors
over Dedekind domains.
\begin{definition}
Consider the subset $Z$ of closed points of the 
affine scheme $Y=\rm Spec\,\it A_{\rm 0\it}$.
A $\sigma$-\em polyhedral
divisor $\mathfrak{D}$ over \rm 
$A_{0}$ is a formal sum
\begin{eqnarray*}
\mathfrak{D} = \sum_{z\in Z}\Delta_{z}\cdot z, 
\end{eqnarray*}
where $\Delta_{z}$ belongs to 
$\rm Pol_{\it\sigma\rm}(\it N_{\mathbb{Q}}\rm)$
and $\Delta_{z} = \sigma$ 
for all but finitely many $z$ in $Z$. 
Let
$z_{1},\ldots,z_{r}$ be elements of $Z$ such
that $\{z\in Z\,|\,\Delta_{z} \neq  \sigma\}\subset \{z_{1},\ldots, z_{r}\}$. If the meaning
of $A_{0}$ is clear from the context, then we write
\begin{eqnarray*}
\mathfrak{D} = \sum_{i = 1}^{r}\Delta_{z_{i}}\cdot z_{i}.
\end{eqnarray*}
\end{definition}
In the sequel, we let 
$\omega_{M} = \omega\cap M$ for a polyhedral cone $\omega \subset M_{\mathbb{Q}}$.
Starting with a $\sigma$-polyhedral divisor 
$\mathfrak{D}$ we can build an $M$-graded
algebra over $A_{0}$ with weight cone $\sigma^{\vee}$
in the same way as in [AH06, Section $3$]. 

\begin{rappel}
Let $m\in\sigma^{\vee}$.
Then for any $z\in Z$ the expression
\begin{eqnarray*}
h_{z}(m) = \min_{v\in \Delta_{z}} \,\left\langle m,v \right\rangle
\end{eqnarray*}
is well defined. 
The function $h_{z}$ on the cone $\sigma^{\vee}$ is upper convex and
positively homogeneous. It is identically zero 
if and only if $\Delta_{z} = \sigma$.
The \em evaluation \rm of $\mathfrak{D}$ in 
a vector $m\in\sigma^{\vee}$
is the $\mathbb{Q}$-divisor
\begin{eqnarray*}
\mathfrak{D}(m) = \sum_{z\in Z}h_{z}(m)\cdot z.
\end{eqnarray*}
In analogy with the notation of [FZ03] we
denote by $A_{0}[\mathfrak{D}]$ 
the $M$-graded subring 
\begin{eqnarray*}
\bigoplus_{m\in\sigma^{\vee}_{M}}A_{m}\chi^{m} \subset K_{0}[M],
\,\,\,\rm where\,\,\,\it
A_{m} = H^{\rm 0 \it}\left(Y,\mathcal{O}_{Y}\rm 
\left(\it \lfloor \mathfrak{D}(m)\rfloor \right) \right). 
\end{eqnarray*}
\end{rappel} 
\begin{notation}
Let 
\begin{eqnarray*}
 f = (f_{1}\chi^{m_{1}},\ldots, f_{r}\chi^{m_{r}})
\end{eqnarray*}
be an $r$-tuple of homogeneous elements of $K_{0}[M]$. 
Assume that the vectors $m_{1},\ldots, m_{r}$ generate the 
cone $\sigma^{\vee}$. 
We denote by $\mathfrak{D}[f]$ the $\sigma$-polyhedral divisor
\begin{eqnarray*}
\sum_{z\in Z}\Delta_{z}[f]\cdot z,
\rm \,\,\,\, where\,\,\,\,\it
\rm \Delta_{\it z}\it[f] = \left\{\, v\in N_{\mathbb{Q}}\,|\,
\left\langle m_{i},v \right\rangle\geq 
-\rm ord_{\it z}\it\,f_{i},\,\, i = \rm 1,2,\it\ldots, r\,\right\}.
\end{eqnarray*}
In section $3$, we use a similar notation 
for polyhedral divisors over a regular projective curve.
\end{notation}
The main result of this section is the following theorem.
For a proof of part $\rm (iii)$ we refer the reader to the argument of 
Theorem $2.4$ in [Lan13].
\begin{theorem}
Let $A_{0}$ be a Dedekind domain with field of fractions $K_{0}$ and
let $\sigma\subset N_{\mathbb{Q}}$ be a strongly convex polyhedral 
cone. Then the following hold.
\begin{enumerate}
\item[(i)]
If $\mathfrak{D}$ is a $\sigma$-polyhedral 
divisor over $A_{0}$, then the algebra $A_{0}[\mathfrak{D}]$
is normal, noetherian, and has the same field of fractions
as $K_{0}[M]$.
\item[(ii)] 
Conversely, let 
\begin{eqnarray*}
A = \bigoplus_{m\in\sigma^{\vee}_{M}}A_{m}\chi^{m}
\end{eqnarray*}
be a normal noetherian $M$-graded $A_{0}$-subalgebra of 
$K_{0}[M]$ with weight cone $\sigma^{\vee}$ and $A_{m}\subset K_{0}$,
for all $m\in\sigma^{\vee}_{M}$. 
Assume that the rings $A$
and $K_{0}[M]$ have the same field of fractions\footnote{This condition is equivalent 
to ask that the weight semigroup
of $A$ generates $M$.}.
Then there exists a unique
$\sigma$-polyhedral divisor $\mathfrak{D}$ over $A_{0}$ 
such that $A = A_{0}[\mathfrak{D}]$.
\item[(iii)]
More explicitly, if 
\begin{eqnarray*}
 f = (f_{1}\chi^{m_{1}},\ldots,f_{r}\chi^{m_{r}})
\end{eqnarray*}
is an $r$-tuple of homogeneous elements of $K_{0}[M]$
with nonzero vectors $m_{1},\ldots, m_{r}$ generating 
the lattice $M$, then the normalization of 
the ring
\begin{eqnarray*}
A = A_{0}[f_{1}\chi^{m_{1}},\ldots ,f_{r}\chi^{m_{r}}] 
\end{eqnarray*}
is equal to $A_{0}[\mathfrak{D}[f]]$ \rm (see $2.4$).
\end{enumerate} 
\end{theorem}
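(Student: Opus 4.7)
The plan is to adapt the Altmann--Hausen strategy to the Dedekind setting, using Theorem $1.4$ and Lemma $1.6$ as the essential one-dimensional input. For part (i), I would realize $A_{0}[\mathfrak{D}]$ as the intersection, inside the fraction field of $K_{0}[M]$, of a family of graded valuation subrings: to each pair $(z,v)$ with $z \in Z$ and $v$ a vertex of $\Delta_{z}$, associate the graded valuation $\mathfrak{v}_{z,v}(f\chi^{m}) = \mathrm{ord}_{z}(f) + \langle m, v\rangle$, and combine these with the valuations attached to the maximal ideals of $A_{0}$ and with those coming from the extremal rays of $\sigma^{\vee}$. Given the definition $h_{z}(m) = \min_{v \in \Delta_{z}}\langle m, v\rangle$ together with Theorem $1.4$, a homogeneous element $f\chi^{m}$ lies in $A_{0}[\mathfrak{D}]$ if and only if it lies in every one of these valuation rings; since an intersection of normal rings inside a common field is normal, this simultaneously gives normality of $A_{0}[\mathfrak{D}]$ and the equality of fraction fields. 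For the noetherian property, Gordan's lemma produces a finite generating set $m_{1},\ldots,m_{r}$ of $\sigma^{\vee}\cap M$, Theorem $1.4$ shows each $A_{m_{i}}$ is a finitely generated $A_{0}$-module, and part (iii) then identifies $A_{0}[\mathfrak{D}]$ with the normalization of the subalgebra generated by these modules, which is finitely generated over the noetherian ring $A_{0}$.

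For part (ii), given $A$ as in the hypothesis, I would first argue that $A$ is finitely generated as an $A_{0}$-algebra. Noetherianity of $A$ together with Gordan's lemma applied to the weight semigroup $\sigma^{\vee}\cap M$ (which is finitely generated by the weight-cone hypothesis) lets me pick finitely many homogeneous generators $f_{1}\chi^{m_{1}},\ldots, f_{r}\chi^{m_{r}}$ whose weights span $\sigma^{\vee}$. Applying part (iii) to the tuple $f = (f_{1}\chi^{m_{1}},\ldots,f_{r}\chi^{m_{r}})$ then yields directly $A = A_{0}[\mathfrak{D}[f]]$ for the explicit polyhedral divisor of Notation $2.4$; this gives existence of $\mathfrak{D}$ and confirms that $\Delta_{z} = \sigma$ for all but the finitely many $z$ in the combined support of the $\mathrm{div}(f_{i})$. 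For uniqueness, if $A_{0}[\mathfrak{D}] = A_{0}[\mathfrak{D}']$ then comparing graded pieces via Theorem $1.4$ gives $\lfloor \mathfrak{D}(m)\rfloor = \lfloor \mathfrak{D}'(m)\rfloor$ for every $m \in \sigma^{\vee}\cap M$; testing this along multiples $km$ forces $\mathfrak{D}(m) = \mathfrak{D}'(m)$ as $\mathbb{Q}$-divisors, and Legendre duality $\Delta_{z} = \{v \in N_{\mathbb{Q}} : \langle m, v\rangle \geq h_{z}(m) \text{ for all } m \in \sigma^{\vee}\}$ recovers each $\Delta_{z}$ uniquely.

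The delicate point will be the finite-generation reduction at the beginning of part (ii): extracting from the bare hypothesis ``noetherian with weight cone $\sigma^{\vee}$'' a presentation of $A$ by finitely many homogeneous generators whose weights span $\sigma^{\vee}$. This requires, on top of Gordan's lemma, that each $A_{m}$ is a finitely generated $A_{0}$-module, which one sees either directly from noetherianity of $A$ via Theorem $1.4$, or by combining the fraction-field hypothesis with normality to realize $A_{m}$ as the module of global sections of a coherent sheaf on $Y$. Once this reduction is secured, everything else collapses to part (iii) and to formal manipulations with piecewise-linear support functions and polyhedral duality.
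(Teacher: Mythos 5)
Your treatment of normality and of the fraction field in (i), and your uniqueness argument in (ii), are essentially the paper's (intersection of the graded valuation rings $\nu_{z,v}$, resp.\ comparison of graded pieces via Theorem $1.4$ plus homogeneity and duality). The genuine gap is in the two places where you lean on part (iii). First, your finiteness argument for (i) rests on the claim that $A_{0}[\mathfrak{D}]$ is the normalization of the subalgebra generated by the graded pieces $A_{m_{1}},\ldots,A_{m_{r}}$ for a Gordan generating set of $\sigma^{\vee}\cap M$. This is false in general: by (iii) that normalization is $A_{0}[\mathfrak{D}[f]]$, and $\mathfrak{D}[f]$ only remembers the rounded values $\lfloor h_{z}(m_{i})\rfloor$ at the chosen degrees, while the support function $h_{z}$ of $\Delta_{z}$ is not determined by its values on semigroup generators. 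Concretely, take $A_{0}=\mathbf{k}[t]$, $N=\mathbb{Z}^{2}$, $\sigma=\sigma^{\vee}=\mathbb{Q}_{\geq 0}^{2}$ and $\mathfrak{D}=\Delta\cdot (t)$ with $\Delta=\mathrm{conv}\{(1,0),(0,1)\}+\sigma$: then $A_{(1,0)}=A_{(0,1)}=A_{0}$, the subalgebra they generate is the polynomial ring $A_{0}[\chi^{(1,0)},\chi^{(0,1)}]$, already normal, yet $A_{0}[\mathfrak{D}]$ contains $t^{-1}\chi^{(1,1)}$. Moreover, even where such an identification holds, finite generation of a normalization over an arbitrary Dedekind domain is not automatic (no Nagata-type hypothesis is available). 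The paper avoids both problems by a direct construction: it subdivides $\sigma^{\vee}$ into regular full-dimensional cones on which $\mathfrak{D}$ is linear, proves finite generation of each piece by the explicit periodicity argument of Lemma $2.8$, and concludes with the surjection onto $A$ and Theorem $2.7$ (Goto--Yamagishi).

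Second, the logical architecture: the paper never uses (iii) to prove (i) or (ii) -- it proves (ii) self-containedly and only defers (iii) to the argument of [La, $2.4$] -- whereas your proposal makes (iii) carry the entire weight of (ii) and part of (i) while offering no proof of it beyond ``formal manipulations with support functions.'' The actual content of the paper's proof of (ii) is exactly what is missing from your plan: Lemma $2.9$ (the weight semigroup is all of $\sigma^{\vee}_{M}$ and each ray subalgebra $A_{L}$ is normal and noetherian), Corollary $1.7$/Lemma $1.6$ (the D.P.D.\ presentation over a Dedekind ring applied to $A_{L}$ on every half-line $L\subset\sigma^{\vee}$), and above all Lemma $2.11$, the Hilbert-basis computation identifying $\min\langle m,\Delta_{z}[f]\rangle$ with the D.P.D.\ divisor of $A_{L}$, which is proved by an auxiliary normalization computation of a toric-type algebra over $\mathbb{C}[t]$. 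If you are allowed to quote (iii) as a black box, then your deduction of (ii)-existence is correct (a normal $A$ generated by homogeneous elements $f_{i}\chi^{m_{i}}$ equals its own normalization, hence $A=A_{0}[\mathfrak{D}[f]]$; you should also discard degree-zero generators and note that the $m_{i}$ generate $M$ because of the fraction-field hypothesis), but (i)'s noetherianity still requires the direct argument above, and within this paper (iii) is part of the statement, so some version of Lemma $2.11$ or of the argument of [La] must actually be supplied.
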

Let us give an example related to the ring of integers of a number
field.
\begin{exemple}
For a number field $K$, the group of classes $\rm Cl\,\it K$  
is the quotient of the group of fractional ideals of $K$ by
the subgroup of principal fractional ideals. 
In other words,
$\rm Cl\,\it K \rm = Pic\,\it Y$, where $Y = \rm Spec\,\it \mathbb{Z}_{K}$
 is the affine scheme associated to the ring of integers of $K$. 
It is known that the group $\rm Cl\,\it K$ is finite. 
Furthermore $\mathbb{Z}_{K}$ is
a principal ideal domain if and only if $\rm Cl\,\it K$ is
trivial.

Let $K = \mathbb{Q}(\sqrt{-5})$.
Then $\mathbb{Z}_{K} = \mathbb{Z}[\sqrt{-5}]$ and the group $\rm Cl\,\it K$
is isomorphic to $\mathbb{Z}/2\mathbb{Z}$. A set of
representatives in $\rm Cl\,\it K$ is given by the fractional ideals 
$\mathfrak{a}=(2,\,1+\sqrt{-5})$ and $\mathbb{Z}_{K}$.
Given $x,y$ two independent variables over $K$,
consider the $\mathbb{Z}^{2}$-graded ring
\begin{eqnarray*}
A = \mathbb{Z}_{K}\left[\,3\,x^{2}y,\,2\,y,\,
6\,x\,\right].  
\end{eqnarray*}
Let us describe the normalization $\bar{A}$ of $A$. 
Denoting respectively by $\mathfrak{b}$, 
$\mathfrak{c}$ the prime ideals $(3,\,1+\sqrt{-5})$ and 
$(3,\,1-\sqrt{-5})$, we
have the decompositions 
\begin{eqnarray*}
(2) = \mathfrak{a}^{2},\,\,\,
(3) = \mathfrak{b}\cdot\mathfrak{c}. 
\end{eqnarray*}
Observe that the ideals 
$\mathfrak{a}$, $\mathfrak{b}$, 
$\mathfrak{c}$ are distinct.
Thus we have
\begin{eqnarray*}
\rm div\, 2 = 2\cdot\it[\mathfrak{a}]\,\,\,\,\rm and
\it\,\,\,\,\rm div\,3 = [\it\mathfrak{b}] + [\mathfrak{c}],  
\end{eqnarray*}
where $\mathfrak{a},\mathfrak{b},\mathfrak{c}$
are seen as closed points of 
$Y=\rm Spec\,\it\mathbb{Z}_{K}$.
Let $\mathfrak{D}$ be the polyhedral 
divisor over $\mathbb{Z}_{K}$ 
given by $\Delta_{\mathfrak{a}}\cdot[\mathfrak{a}]+
\Delta_{\mathfrak{b}}\cdot[\mathfrak{b}]+
\Delta_{\mathfrak{c}}\cdot[\mathfrak{c}]$ with the polyhedra
\begin{eqnarray*}
 \Delta_{\mathfrak{a}} = \left\{(v_{1},v_{2})\in\mathbb{Q}^{2}\,|
\,2v_{1}+v_{2}\geq 0,\,v_{2}\geq -2,\,
v_{1}\geq -2\right\}\,\,\,\rm and
\end{eqnarray*}
\begin{eqnarray*}
  \Delta_{\mathfrak{b}} = \Delta_{\mathfrak{c}} = 
\left\{(v_{1},v_{2})\in\mathbb{Q}^{2}\,|
\,2v_{1}+v_{2}\geq -1,\,v_{2}\geq 0,\,
v_{1}\geq -1\right\}.
\end{eqnarray*}
By Theorem $2.5$ we obtain $\bar{A} = A_{0}[\mathfrak{D}]$,
where $A_{0}=\mathbb{Z}_{K}$. The weight cone of $A$
is the first quadrant $\omega = (\mathbb{Q}_{\geq 0})^{2}$. 
An easy computation shows that 
\begin{eqnarray*}
A_{0}[\mathfrak{D}] = 
\mathbb{Z}_{K}\left[\,
2\,y,\,\,6\,xy,\,\, 3(1+\sqrt{-5})\,xy,\,\, 
3\,x^{2}y,\,\,6x\,\right]. 
\end{eqnarray*}
\end{exemple}
The proof of Theorem $2.5$ needs some preparations. 
We start by a well-known result [GY83, Theorem $1.1$] 
yielding an equivalence between noetherian and finitely 
generated properties of multigraded algebras. Note
that this result does not hold for algebras graded by an arbitrary 
abelian group; a counterexample is given in [GY83, $3.1]$.
\begin{lemme}
Let $G$ denote a finitely generated abelian group 
and let $A$ be a $G$-graded ring. 
Then the following statements are equivalent.
\begin{enumerate}
 \item[(i)]
The ring $A$ is noetherian.
 \item[(ii)]
The graded piece $A_{0}$ corresponding to the neutral
element of $G$ is a noetherian ring and the
$A_{0}$-algebra $A$ is finitely generated. 
\end{enumerate}
\end{lemme}
The next lemma will enable us to show that
the ring $A_{0}[\mathfrak{D}]$, coming from a polyhedral divisor $\mathfrak{D}$
over a Dedekind domain $A_{0}$, is noetherian.

\begin{lemme}
For any $\mathbb{Q}$-divisors $D_{1},\ldots, D_{r}$ on 
$Y = \rm Spec\,\it A_{\rm 0\it}$, the $A_{0}$-algebra
\begin{eqnarray*}
B = \bigoplus_{(m_{1},\ldots,m_{r})\in\mathbb{N}^{r}}
H^{0}\left(Y,\mathcal{O}_{Y}
\left(\left\lfloor
\sum_{i = 1}^{r}m_{i}D_{i}\right\rfloor\right)\right) 
\end{eqnarray*}
is finitely generated. 
\end{lemme}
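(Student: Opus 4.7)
The plan is to exploit the surjectivity of multiplication in Theorem $1.4$ on a Veronese-type subalgebra after clearing denominators, and then to show that $B$ is a finite module over this Veronese subalgebra.

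First I would choose a positive integer $d$ such that every divisor $dD_{i}$ is integral. For multi-indices in the sub-semigroup $d\mathbb{N}^{r}\subset\mathbb{N}^{r}$, the floor function disappears, so the corresponding Veronese subring
\begin{eqnarray*}
B^{(d)}=\bigoplus_{(n_{1},\ldots,n_{r})\in\mathbb{N}^{r}}H^{0}\!\left(Y,\mathcal{O}_{Y}\!\left(\sum_{i=1}^{r}n_{i}(dD_{i})\right)\right)
\end{eqnarray*}
is a genuine (integral) divisorial multigraded algebra. By iterated application of the surjectivity statement in Theorem $1.4$, the natural multiplication map
\begin{eqnarray*}
H^{0}(Y,\mathcal{O}_{Y}(dD_{1}))^{\otimes n_{1}}\otimes\cdots\otimes H^{0}(Y,\mathcal{O}_{Y}(dD_{r}))^{\otimes n_{r}}\longrightarrow H^{0}\!\left(Y,\mathcal{O}_{Y}\!\left(\sum n_{i}(dD_{i})\right)\right)
\end{eqnarray*}
is surjective. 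Since each $H^{0}(Y,\mathcal{O}_{Y}(dD_{i}))$ is a fractional ideal, hence a finitely generated $A_{0}$-module by Theorem $1.4$, the algebra $B^{(d)}$ is finitely generated over $A_{0}$, and in particular noetherian.

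Next, I would decompose $B$ by residue classes modulo $d$. For each tuple $\bar{c}=(c_{1},\ldots,c_{r})$ with $0\leq c_{i}<d$, write $m_{i}=dn_{i}+c_{i}$, so that
\begin{eqnarray*}
\Big\lfloor \sum m_{i}D_{i}\Big\rfloor = \sum n_{i}(dD_{i}) + E_{\bar{c}}, \qquad E_{\bar{c}}:=\Big\lfloor \sum c_{i}D_{i}\Big\rfloor,
\end{eqnarray*}
because $\sum n_{i}(dD_{i})$ is integral. This shows that the isotypic component $B^{\bar{c}}\subset B$ is
\begin{eqnarray*}
B^{\bar{c}}=\bigoplus_{(n_{1},\ldots,n_{r})\in\mathbb{N}^{r}}H^{0}\!\left(Y,\mathcal{O}_{Y}\!\left(\sum n_{i}(dD_{i})+E_{\bar{c}}\right)\right),
\end{eqnarray*}
and applying once more the surjectivity in Theorem $1.4$ to the pair $(E_{\bar{c}},\sum n_{i}(dD_{i}))$, I get that $B^{\bar{c}}$ is generated as a $B^{(d)}$-module by the fractional ideal $H^{0}(Y,\mathcal{O}_{Y}(E_{\bar{c}}))$, which is finitely generated over $A_{0}$.

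Finally I would assemble the pieces: $B=\bigoplus_{\bar{c}\in\{0,\ldots,d-1\}^{r}}B^{\bar{c}}$ is a finite direct sum of finitely generated $B^{(d)}$-modules, hence a finitely generated $B^{(d)}$-module, and therefore a finitely generated $A_{0}$-algebra. The main potential obstacle is the non-linearity introduced by the floor function on the original grading, but the reduction to the Veronese $B^{(d)}$ and the residue-class decomposition precisely isolates the floor in the finitely many constants $E_{\bar{c}}$, so the argument goes through cleanly.
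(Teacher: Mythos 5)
Your proof is correct and follows essentially the same route as the paper: choose $d$ clearing the denominators, split each exponent as $m_{i}=dq_{i}+c_{i}$, use the identity $\lfloor\sum_{i}m_{i}D_{i}\rfloor=\sum_{i}q_{i}(dD_{i})+\lfloor\sum_{i}c_{i}D_{i}\rfloor$, and invoke the surjectivity of multiplication of global sections from Theorem $1.4$. The only difference is packaging: you phrase the conclusion as module-finiteness of $B$ over the Veronese subalgebra $B^{(d)}$, whereas the paper directly exhibits the finitely many graded pieces indexed by the box $0\leq m_{i}\leq d$ as $A_{0}$-algebra generators of $B$, leaving the appeal to the surjectivity statement of Theorem $1.4$ implicit where you make it explicit.
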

\begin{proof}
Let $d$ be a positive integer 
such that for $i = 1,\ldots,r$,
the divisor
$dD_{i}$ is integral. 
Consider the lattice polytope 
\begin{eqnarray*}
Q= \left\{\,(m_{1},\ldots,m_{r})\in\mathbb{Q}^{r}\,|\,\,
0\leq m_{i}\leq d,\,i = 1,\ldots,r\,\right\}. 
\end{eqnarray*}
The subset $Q\cap\mathbb{N}^{r}$ being finite, 
the $A_{0}$-module 
\begin{eqnarray*}
E: = \bigoplus_{(m_{1},\ldots,m_{r})\in\mathbb{N}^{r}\cap Q}
H^{0}\left(Y,\mathcal{O}_{Y}
\left(\left\lfloor
\sum_{i = 1}^{r}m_{i}D_{i}\right\rfloor\right)\right)
\end{eqnarray*}
is finitely generated (see $1.4$). 
Let $(m_{1},\ldots,m_{r})$ be an element of $\mathbb{N}^{r}$.
Write $m_{i} = dq_{i}+r_{i}$, where $q_{i},r_{i}\in\mathbb{N}$
are such that $0\leq r_{i}<d$. The equality
\begin{eqnarray*}
\left\lfloor \sum_{i = 1}^{r}m_{i}D_{i}\right\rfloor =  
\sum_{i =1}^{r}q_{i}\left\lfloor dD_{i}
\right\rfloor  + \left\lfloor 
\sum_{i = 1}^{r}r_{i}D_{i}
\right\rfloor
\end{eqnarray*}
implies that every homogeneous element of $B$ can 
be expressed as a polynomial
in $E$. If $f_{1},\ldots, f_{s}$ generate
the $A_{0}$-module $E$, then we have $A = A_{0}[f_{1},\ldots,f_{s}]$,
proving our statement.
\end{proof}
Next we give a proof of the first part of Theorem $2.5$.
\begin{proof}
Let $A = A_{0}[\mathfrak{D}]$. Since the cone 
$\sigma^{\vee}$ is full dimensional, by Lemma $1.4$ 
the algebras $A$ and $K_{0}[M]$ have the same field of fractions. 
Let us show that $A$ is a normal ring.
Given a closed point $z\in Z$ and an 
element of $v\in \Delta_{z}$, we define the 
map 
\begin{eqnarray*}
\nu_{z,v}:K_{0}[M]-\{0\}\rightarrow\mathbb{Z} 
\end{eqnarray*}
as follows. Let $\alpha \in K_{0}[M]$ be nonzero. We may decompose
it as a sum of homogeneous elements
\begin{eqnarray*}
\alpha = \sum_{i = 1}^{r}f_{i}\chi^{m_{i}},\,\,\,\rm where\,\,\,\it f_{i}\in K_{\rm 0\it}^{\star}. 
\end{eqnarray*}
We let 
\begin{eqnarray*}
\nu_{z,v}(\alpha) = \min_{1\leq i\leq r}\left\{\rm ord_{\it z}\it\,f_{i} 
\rm + \it\left\langle m_{i},v \right\rangle\right\}. 
\end{eqnarray*}
The map $\nu_{z,v}$ defines a discrete valuation 
on $\rm Frac\,\it A$. 
Denote by $\mathcal{O}_{v,z}$ the associated local ring. 
By the definition of the algebra $A_{0}[\mathfrak{D}]$
 we have
\begin{eqnarray*}
A = K_{0}[M]\cap\bigcap_{z\in Z}\bigcap_{v\in\Delta_{z}}
\mathcal{O}_{v,z}\,.
\end{eqnarray*}
This shows that $A$ is normal as an intersection of normal rings with the same 
field of fractions $\rm Frac\,\it A$. 

It remains to show that $A$ is noetherian.
By Hilbert's Basis Theorem, it suffices to
show that $A$ is finitely 
generated. 
Let $\lambda_{1},\ldots,\lambda_{e}$ be full dimensional
 regular subcones of
$\sigma^{\vee}$, which define a subdivision of $\sigma^{\vee}$. Assume that
for any $i$ the evaluation map
\begin{eqnarray*}
\sigma^{\vee}\rightarrow \rm Div_{\it\mathbb{Q}}(\it Y),\,\,\,
m\mapsto\mathfrak{D}(m)
\end{eqnarray*}
is linear on $\lambda_{i}$. Fix $i\in\mathbb{N}$ such 
that $1\leq i\leq e$. Consider the 
distinct elements $v_{1},\ldots, v_{n}$
of the Hilbert basis of
$\lambda_{i}$. Denote by $A_{\lambda_{i}}$ the algebra
\begin{eqnarray*}
\bigoplus_{m\in\lambda_{i}\cap M}
H^{0}(Y, \mathcal{O}_{Y}(\lfloor \mathfrak{D}(m)\rfloor )\chi^{m}.
\end{eqnarray*}
Then the vectors $v_{1},\ldots,v_{n}$ form a basis
of the lattice $M$ and so 
\begin{eqnarray*}
A_{\lambda_{i}} \simeq 
\bigoplus_{(m_{1},\ldots,m_{n})\in\mathbb{N}^{n}}
H^{0}\left(Y,\mathcal{O}_{Y}\left(\left\lfloor
\sum_{i = 1}^{n}m_{i}\,\mathfrak{D}(v_{i})\right\rfloor\right)\right). 
\end{eqnarray*}
By Lemma $2.8$, the algebra $A_{\lambda_{i}}$ is finitely generated over $A_{0}$.
The surjective map
\begin{eqnarray*}
A_{\lambda_{1}}\otimes\ldots\otimes A_{\lambda_{e}}\rightarrow A 
\end{eqnarray*}
shows that $A$ is also finitely generated.
\end{proof}
For the second part of Theorem $2.5$ we need the following lemma. 
\begin{lemme}
Assume that $A$ verifies the assumptions of $2.5\,\rm (ii)$.
Then the following statements hold.
\begin{enumerate}
\item[(i)] For any $m\in\sigma^{\vee}_{M}$ we have $A_{m}\neq \{0\}$.
In other words, the weight semigroup of the $M$-graded 
algebra $A$ is $\sigma^{\vee}_{M}$.
\item[(ii)] If $L = \mathbb{Q}_{\geq 0}\cdot m'$ is a half-line contained
in $\sigma^{\vee}$, then the ring
\begin{eqnarray*}
A_{L}:= \bigoplus_{m\in L\cap M}A_ {m}\chi^{m} 
\end{eqnarray*}
is normal and noetherian. 
\end{enumerate}
\end{lemme}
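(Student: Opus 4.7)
The plan is to handle parts (i) and (ii) separately. Both arguments rely on the Dedekind structure of $A_{0}$: for (i) the normality of $A$ is decisive, while for (ii) combinatorial input from Gordan's lemma combined with Theorem $2.7$ yields the desired finite generation.

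For (i), let $S = \{m \in M \mid A_{m} \neq 0\}$ denote the weight semigroup of $A$. Since $\mathrm{cone}(S) = \sigma^{\vee}$ and $S - S = M$ (the latter being equivalent to the hypothesis that $A$ and $K_{0}[M]$ share the same fraction field), every $m \in \sigma^{\vee}_{M}$ satisfies $Nm \in S$ for some positive integer $N$; it therefore suffices to show that $S$ is saturated in $M$. Given $Nm \in S$, pick a nonzero $f \in A_{Nm}$; the principal fractional ideal $fA_{0} \subseteq A_{Nm}$ factors as $fA_{0} = \prod_{\mathfrak{p}} \mathfrak{p}^{v_{\mathfrak{p}}(f)}$ in the Dedekind domain $A_{0}$, and the fractional ideal $J = \prod_{\mathfrak{p}} \mathfrak{p}^{\lceil v_{\mathfrak{p}}(f)/N \rceil}$ is nonzero. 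Any nonzero $h \in J$ then satisfies $h^{N} \in fA_{0} \subseteq A_{Nm}$, so $h\chi^{m} \in K_{0}[M] \subset \mathrm{Frac}(A)$ is a root of the monic polynomial $X^{N} - h^{N}\chi^{Nm}$ with coefficients in $A$. By normality of $A$ we obtain $h\chi^{m} \in A$, and homogeneity forces $h \in A_{m}$; hence $A_{m} \neq 0$.

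For (ii), observe that $A_{L}$ coincides with $A \cap K_{0}[L \cap M]$ inside $K_{0}[M]$, and that $K_{0}[L \cap M] \cong K_{0}[t]$ is a polynomial ring in one variable corresponding to the primitive generator of $L \cap M$. The normality of $A_{L}$ then follows from a standard intersection argument: any $\alpha \in \mathrm{Frac}(A_{L}) \subseteq K_{0}(t)$ which is integral over $A_{L}$ is integral both over the normal ring $K_{0}[t]$ and over $A$, hence belongs to $K_{0}[t] \cap A = A_{L}$.

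For the noetherian property, it suffices by Theorem $2.7$ to show that $A_{L}$ is finitely generated as an $A_{0}$-algebra. Writing $A = A_{0}[h_{1}, \ldots, h_{s}]$ with $h_{j} = g_{j}\chi^{n_{j}}$ homogeneous, the set $P = \{(k_{1}, \ldots, k_{s}) \in \mathbb{N}^{s} \mid \sum_{j} k_{j}n_{j} \in L\}$ is the intersection of $\mathbb{N}^{s}$ with a rational polyhedral cone in $\mathbb{Q}^{s}$, hence a finitely generated monoid by Gordan's lemma. Choose monoid generators $k^{(1)}, \ldots, k^{(\ell)}$ of $P$ and set $e_{i} = \prod_{j} h_{j}^{k_{j}^{(i)}} \in A_{L}$. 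For any $k \in P$, writing $k = \sum_{i} b_{i} k^{(i)}$ with $b_{i} \in \mathbb{N}$, the identity $\prod_{j} h_{j}^{k_{j}} = \prod_{i} e_{i}^{b_{i}}$ places this monomial in $A_{0}[e_{1}, \ldots, e_{\ell}]$; since each homogeneous piece of $A_{L}$ is an $A_{0}$-linear combination of such monomials, we conclude $A_{L} = A_{0}[e_{1}, \ldots, e_{\ell}]$. The main difficulty is in part (i), namely producing an element $h \in K_{0}$ with $h^{N} \in A_{Nm}$; this is the step where the Dedekind hypothesis on $A_{0}$ is indispensable, for without it the saturation of $S$ could easily fail.
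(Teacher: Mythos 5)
Your proof is correct and follows essentially the same route as the paper: in (i) the paper also produces an element $h$ with $h^{N}\chi^{Nm}\in A$ by rounding divisors in the Dedekind ring and then invokes normality of $A$ (it rounds the divisor of the whole graded piece $A_{Nm}$, viewed as a fractional ideal via [GY, Lemma $2.2$] and Theorem $1.4$, whereas you round the divisor of a single element $f$ — a harmless simplification). Part (ii) coincides with the paper's argument: normality via $A_{L}=A\cap K_{0}[\chi^{m'}]$ and the normality of $A$ and $K_{0}[\chi^{m'}]$, and finite generation (hence noetherianity, using Theorem $2.7$ to get homogeneous generators of $A$ over $A_{0}$) by the Gordan/Hilbert-basis argument that the paper cites from [AH, Lemma $4.1$].
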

\begin{proof}
Let 
\begin{eqnarray*}
 S=\left\{m\in\sigma^{\vee}_{M},\,A_{m}\neq \{0\}\right\}
\end{eqnarray*}
be the weight semigroup of $A$. Assume 
that $S\neq \sigma^{\vee}_{M}$. Then
there exist $e\in\mathbb{Z}_{>0}$ and $m\in M$ such 
that $m\not\in S$ and $e\cdot m\in S$. 
Since $A$ is a noetherian ring, by [GY83, Lemma $2.2$] the 
$A_{0}$-module $A_{em}$
is a fractional ideal of $A_{0}$. By 
Lemma $1.4$ we obtain
\begin{eqnarray*}
A_{em} = H^{0}(Y,\mathcal{O}_{Y}(D_{em})) 
\end{eqnarray*}
for some integral divisor
$D_{em}\in \rm Div_{\it \mathbb{Z}\rm}(\it Y\rm )$.
Let $f$ be a nonzero section of 
\begin{eqnarray*}
H^{0}\left(Y,\mathcal{O}_{Y}\left(
\left\lfloor \frac{D_{em}}{e}\right\rfloor\right)\right).
\end{eqnarray*}
This element exists by virtue of Lemma $1.4$. We have 
the inequalities
\begin{eqnarray*}
\rm div\,\it f^{e}\rm\geq \it 
-e\left\lfloor \frac{D_{em}}{e}\right\rfloor
\rm\geq\it -D_{em}.   
\end{eqnarray*}
The normality
of $A$ implies that $f\in A_{m}$. This contradicts our
assumption and gives $\rm (i)$. 
For the second assertion we notice that $A_{L}$ 
is noetherian by $2.7$ and by the argument of [AH06, Lemma $4.1$]. 

It remains to show that $A_{L}$ is normal.
Let $\alpha\in\rm Frac\,\it A_{L}$ be an integral element
over $A_{L}$. By normality of $A$ and $K_{0}[\chi^{m}]$
we obtain that $\alpha\in A\cap K_{0}[\chi^{m}] = A_{L}$ and so
$A_{L}$ is normal. 
\end{proof}
Let us introduce the following notation.
\begin{notation}
Let 
\begin{eqnarray*}
 (m_{i},e_{i}),\,\,  i = 1,\ldots,r
\end{eqnarray*}
be elements of $M\times \mathbb{Z}$
such that the vectors $m_{1},\ldots,m_{r}$ are nonzero and
generate the lattice
$M$. Then the cone $\omega = \rm Cone(\it m_{\rm 1\it},\ldots, m_{r})$
is full dimensional in $M_{\mathbb{Q}}$.
Consider the $\omega^{\vee}$-polyhedron 
\begin{eqnarray*}
\Delta =  \left\{\, v\in N_{\mathbb{Q}},\,
\left\langle m_{i},v \right\rangle\geq 
-e_{i},\,\, i = 1,2,\ldots, r\,\right\}.
\end{eqnarray*}
 Let 
$L = \mathbb{Q}_{\geq 0}\cdot m$ be a half-line
contained in $\omega$ with a primitive vector $m$.
In other words, the element $m$ generates the semigroup 
$L\cap M$. Denote by $\mathscr{H}_{L}$ the Hilbert basis
in the lattice $\mathbb{Z}^{r}$ 
of the nonzero cone
\begin{eqnarray*}
p^{-1}(L)\cap (\mathbb{Q}_{\geq 0})^{r},\,\,\,\,\rm where 
\,\,\,\,\it p:\mathbb{Q}^{r}\rightarrow M_{\mathbb{Q}}
\end{eqnarray*}
is the $\mathbb{Q}$-linear map sending the canonical
basis onto $(m_{1},\ldots,m_{r})$. We let
\begin{eqnarray*}
 \mathscr{H}_{L}^{\star} =
\left\{\,(s_{1},\ldots,s_{r})\in\mathscr{H}_{L}\,,\,\,\,
\sum_{i = 1}^{r}s_{i}\cdot m_{i}\neq 0\,\right\}. 
\end{eqnarray*}
For any vector $(s_{1},\ldots,s_{r})\in 
\mathscr{H}_{L}^{\star}$ there exists a 
unique $\lambda(s_{1},\ldots,s_{r})\in\mathbb{Z}_{>0}$
such that 
\begin{eqnarray*}
\sum_{i = 1}^{r}s_{i}\cdot m_{i} = \lambda(s_{1},\ldots,s_{r})\cdot m.
\end{eqnarray*}
\end{notation}
The proof of the following lemma uses only some elementary facts 
of commutative algebra and of convex geometry. This lemma is the key point in order
to obtain the Altmann-Hausen presentation of Theorem $2.5\, \rm (ii) $.
\begin{lemme} 
Let $\min\left\langle m,\Delta \right\rangle = 
\min_{v\in\Delta}\langle m,v\rangle$. Under the assumptions of $2.10$
we have
\begin{eqnarray*}
\min\left\langle m,\Delta \right\rangle
 = -\min_{(s_{1},\ldots,s_{r})\,\in\,\mathscr{H}_{L}^{\star}}
\,\frac{\sum_{i = 1}^{r}s_{i}\cdot e_{i}}{
\lambda(s_{1},\ldots, s_{r})}\,.
\end{eqnarray*}
\end{lemme}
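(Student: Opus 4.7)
The plan is to identify $\min\langle m,\Delta\rangle$ with the value of a linear program over $v\in N_{\mathbb{Q}}$, apply strong linear programming duality, and then reduce the resulting dual problem to a minimum over $\mathscr{H}_{L}^{\star}$ via the Hilbert basis decomposition of $p^{-1}(L)\cap\mathbb{Q}_{\geq 0}^{r}$.

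First, by the definition of $\Delta$, the quantity $\min\langle m,\Delta\rangle$ is the optimal value of the LP: minimize $\langle m,v\rangle$ subject to $\langle m_{i},v\rangle\geq -e_{i}$ for $i=1,\ldots,r$. Since the half-line $L=\mathbb{Q}_{\geq 0}\cdot m$ lies in $\omega=\operatorname{Cone}(m_{1},\ldots,m_{r})$, one may write $m=\sum_{i}c_{i}m_{i}$ with $c_{i}\geq 0$, which provides a finite lower bound; feasibility is the implicit hypothesis $\Delta\neq\emptyset$. By strong LP duality, the primal optimum equals
\[
-\min\bigl\{\textstyle\sum_{i}s_{i}e_{i}\ :\ s\in\mathbb{Q}_{\geq 0}^{r},\ \sum_{i}s_{i}m_{i}=m\bigr\}.
\]
Hence it suffices to establish the equality $\min_{s\in F}\sum_{i}s_{i}e_{i}=\min_{(s_{1},\ldots,s_{r})\in\mathscr{H}_{L}^{\star}}\frac{\sum_{i}s_{i}e_{i}}{\lambda(s_{1},\ldots,s_{r})}$, where $F=\{s\in\mathbb{Q}_{\geq 0}^{r}:\sum_{i}s_{i}m_{i}=m\}$.

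For this reduction, given $s\in F$, I would decompose $s=\sum_{j}\mu_{j}s^{(j)}$ as a $\mathbb{Q}_{\geq 0}$-combination of Hilbert basis elements $s^{(j)}\in\mathscr{H}_{L}$, which is possible because $\mathscr{H}_{L}$ generates the rational cone $p^{-1}(L)\cap\mathbb{Q}_{\geq 0}^{r}$. Splitting the sum according to whether $s^{(j)}\in\mathscr{H}_{L}^{\star}$ or $s^{(j)}\in\ker p$ and applying $p$, the identity $p(s)=m$ gives $\sum_{j:\,s^{(j)}\in\mathscr{H}_{L}^{\star}}\mu_{j}\lambda(s^{(j)})=1$, so the coefficients $\alpha_{j}:=\mu_{j}\lambda(s^{(j)})$ define a convex combination indexed by $\mathscr{H}_{L}^{\star}$. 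For $s^{(j)}\in\mathscr{H}_{L}\setminus\mathscr{H}_{L}^{\star}$, one has $\sum_{i}s_{i}^{(j)}m_{i}=0$, so pairing with any $v\in\Delta$ yields $\sum_{i}s_{i}^{(j)}e_{i}\geq 0$; here the assumption $\Delta\neq\emptyset$ enters essentially. Consequently
\[
\textstyle\sum_{i}s_{i}e_{i}\ \geq\ \sum_{j:\,s^{(j)}\in\mathscr{H}_{L}^{\star}}\alpha_{j}\cdot\dfrac{\sum_{i}s_{i}^{(j)}e_{i}}{\lambda(s^{(j)})}\ \geq\ \min_{s^{(j)}\in\mathscr{H}_{L}^{\star}}\dfrac{\sum_{i}s_{i}^{(j)}e_{i}}{\lambda(s^{(j)})}.
\]
The reverse inequality is immediate: for any $s^{(j)}\in\mathscr{H}_{L}^{\star}$, the vector $s^{(j)}/\lambda(s^{(j)})$ lies in $F$ and realizes the corresponding ratio. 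Combining the two bounds with the LP-duality identity of the preceding paragraph gives the claimed formula.

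The main obstacle is the bookkeeping of the Hilbert basis decomposition: cleanly separating the $\ker p$ contribution, verifying that the weights $\alpha_{j}$ sum to one, and establishing $\sum_{i}s_{i}^{(j)}e_{i}\geq 0$ for $s^{(j)}\in\mathscr{H}_{L}\setminus\mathscr{H}_{L}^{\star}$ — the subtle point where the geometric hypothesis that $\Delta$ is nonempty enters. Once these convex-geometric facts are in place, strong LP duality provides the translation into the stated formula for $\min\langle m,\Delta\rangle$.
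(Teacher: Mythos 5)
Your proof is correct, but it follows a genuinely different route from the paper's. The paper argues algebraically: it forms the graded algebra $A=\mathbb{C}[t][t^{e_{1}}\chi^{m_{1}},\ldots,t^{e_{r}}\chi^{m_{r}}]$, identifies its normalization with the semigroup algebra of the cone generated by $(0,1),(m_{1},e_{1}),\ldots,(m_{r},e_{r})$ via Hochster's results, notes (this is where the duality you make explicit is buried, as a ``routine calculation'') that the degree-$w$ piece of the normalization is $H^{0}(\mathbb{A}^{1}_{\mathbb{C}},\mathcal{O}(\lfloor\min\langle w,\Delta\rangle\rfloor\cdot(0)))$, and then computes the same ray subalgebra $A_{L}$ a second time through its generators $t^{\sum_{i}s_{i}e_{i}}\chi^{\lambda(s)m}$, $s\in\mathscr{H}_{L}^{\star}$, using Corollary $1.7$; the uniqueness statement of Lemma $1.6$ then forces the two $\mathbb{Q}$-divisors on $\mathbb{A}^{1}_{\mathbb{C}}$ to coincide, which is the stated formula. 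You instead stay entirely inside convex geometry: strong LP duality converts $\min\langle m,\Delta\rangle$ into $-\min_{F}\sum_{i}s_{i}e_{i}$ over $F=p^{-1}(m)\cap\mathbb{Q}_{\geq 0}^{r}$, and the Hilbert-basis decomposition of $p^{-1}(L)\cap\mathbb{Q}_{\geq 0}^{r}$ — with the $\ker p$ contributions nonnegative because $\Delta\neq\emptyset$, and the weights $\mu_{j}\lambda(s^{(j)})$ summing to $1$ after applying $p$ — identifies that dual minimum with the minimum of the ratios over $\mathscr{H}_{L}^{\star}$, the reverse inequality coming from $s/\lambda(s)\in F$. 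What each approach buys: yours is more elementary and self-contained (no auxiliary base field $\mathbb{C}$, no normalization, no appeal to Hochster or to Corollary $1.7$ and Lemma $1.6$), and it makes transparent exactly where the nonemptiness of $\Delta$ — implicit in $2.10$ in calling $\Delta$ an $\omega^{\vee}$-polyhedron, and equally implicit in the paper's proof — enters; the cost is invoking strong LP duality, a standard fact the paper never states. The paper's longer route reuses machinery it has already set up and needs again, and its intermediate description of the normalization is precisely the form in which the lemma is then applied in the proofs of $2.5\,\mathrm{(ii)}$ and $3.5\,\mathrm{(i)}$.
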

\begin{proof} 
Consider the $M$-graded subalgebra 
\begin{eqnarray*}
A = \mathbb{C}[t][t^{e_{1}}\chi^{m_{1}},\ldots,t^{e_{r}}\chi^{m_{r}}]\subset 
\mathbb{C}(t)[M], 
\end{eqnarray*}
where $t$ is a variable.
The field of fractions of $A$ is the same as
that of 
$\mathbb{C}(t)[M]$. By the results of [Hoc72], the normalization of $A$ is 
\begin{eqnarray*}
\bar{A} = \mathbb{C}[\omega_{0}\cap(M\times\mathbb{Z})],\,\,\,\rm  
where\,\,\,\it \omega_{\rm 0\it }\subset M_{\mathbb{Q}}\times\mathbb{Q}
\end{eqnarray*}
is the rational cone generated by $(0,1),(m_{1},e_{1}),\ldots,(m_{r},e_{r})$. 
A routine calculation shows that  
\begin{eqnarray*}
\omega_{0} = \{(w,-\min\left\langle w,\Delta\right\rangle + e)\,|\, w\in\omega, \, e\in\mathbb{Q}_{\geq 0}\},
\end{eqnarray*}
and so 
\begin{eqnarray*}
 \bar{A} = \bigoplus_{m\in\omega\cap M}
H^{0}(\mathbb{A}^{1}_{\mathbb{C}},
\mathcal{O}_{\mathbb{A}^{1}_{\mathbb{C}}}(
\lfloor
\min\left\langle m,\Delta\right\rangle
\rfloor \cdot(0)))\chi^{m},
\end{eqnarray*}
where $\mathbb{A}^{1}_{\mathbb{C}}=\rm Spec\,\it 
\mathbb{C}[t]$.

The sublattice $G\subset M$ generated by 
$p(\mathscr{H}_{L}^{\star})$ is a subgroup
of $\mathbb{Z}\cdot m$. Therefore there exists 
a unique integer 
$d\in\mathbb{Z}_{>0}$ such
that $G = d\,\mathbb{Z}\cdot m$.
For an element $m'\in\omega\cap M$, we denote by $A_{m'}$ 
(resp. $\bar{A}_{m'}$) the graded
piece of $A$ (resp. $\bar{A}$) corresponding to $m'$. 
Then the 
normalization $\bar{A}^{(d)}_{L}$ of the algebra 
\begin{eqnarray*}
A^{(d)}_{L} : = 
\bigoplus_{s\geq 0}A_{sdm}\chi^{sdm}
\rm \,\,\,is\,\,\, \it 
B_{L} := \bigoplus_{s\geq \rm 0\it}
\bar{A}_{sdm}\chi^{sdm}.
\end{eqnarray*} 
Furthermore  
\begin{eqnarray*}
A_{L} =\bigoplus_{s\geq 0}A_{sm}\chi^{sm} 
\end{eqnarray*}
is generated over $\mathbb{C}[t]$
by the elements 
\begin{eqnarray*}
f_{(s_{1},\ldots,s_{r})} :=
\prod_{i = 1}^{r}(t^{e_{i}}\chi^{m_{i}})^{s_{i}} = 
t^{\sum_{i = 1}^{r}s_{i}e_{i}}\chi^{\lambda(s_{1},\ldots, s_{r})m},
\end{eqnarray*}
where $(s_{1},\ldots,s_{r})$ runs over $\mathscr{H}_{L}^{\star}$.
By the choice of the integer $d$ we have 
$A^{(d)}_{L} = A_{L}$. Considering the
$G$-graduation of $A^{(d)}_{L}$,
for any $(s_{1},\ldots,s_{r})\in
\mathscr{H}_{L}^{\star}$ 
the element
$f_{(s_{1},\ldots,s_{r})}$ of the graded ring $A^{(d)}_{L}$
has degree
\begin{eqnarray*}
\rm deg\,\it f_{
(s_{\rm 1\it},\ldots, s_{r})} := 
\frac{\lambda(
s_{\rm 1\it},\dots, s_{r})}{d}\,\,.
\end{eqnarray*}
Letting 
\begin{eqnarray*}
 D = -\min_{(s_{1},\ldots,s_{r})
\in\mathscr{H}_{L}^{\star}}
\frac{\rm div\it\,
f_{(s_{\rm 1\it},\ldots,s_{r})}}
{\rm deg\,\it f_{
(s_{\rm 1\it},\ldots, s_{r})}}
=
-\min_{(s_{1},\ldots,s_{r})\in\mathscr{H}_{L}^{\star}}
d\cdot \frac{\sum_{i = 1}^{r}s_{i}e_{i}}{
\lambda(s_{1},\ldots, s_{r})}\cdot (0),
\end{eqnarray*} 
by Lemma $1.7$ we obtain 
\begin{eqnarray*}
\bar{A}^{(d)}_{L} = 
\bigoplus_{s\geq 0}
H^{0}
(\mathbb{A}^{1}_{\mathbb{C}},
\mathcal{O}_{\mathbb{A}^{1}_{\mathbb{C}}}
(\lfloor sD\rfloor))\chi^{sdm}.
\end{eqnarray*}
The equality $\bar{A}^{(d)}_{L} = B_{L}$
implies that for any integer $s\geq 0$
\begin{eqnarray*}
 H^{0}(\mathbb{A}^{1}_{\mathbb{C}},
\mathcal{O}_{\mathbb{A}^{1}_{\mathbb{C}}}(
\lfloor
\min\left\langle sd\cdot m,\Delta\right\rangle
\rfloor \cdot(0))) =  H^{0}
(\mathbb{A}^{1}_{\mathbb{C}},
\mathcal{O}_{\mathbb{A}^{1}_{\mathbb{C}}}(\lfloor sD\rfloor)).
\end{eqnarray*}
Hence by Lemma $1.6$ we have
\begin{eqnarray*}
D = 
\min\,\langle d\cdot m,\Delta\rangle
\cdot (0). 
\end{eqnarray*}
Dividing by $d$, we obtain the desired formula.
\end{proof}
Let $A$ be an $M$-graded algebra satisfying the assumptions
of $2.5\,\rm (ii)$. Using the D.P.D. presentation
on each half line of the weight cone $\sigma^{\vee}$ (see Lemma $1.6$), 
we can define a map
\begin{eqnarray*}
\sigma^{\vee}\rightarrow \rm Div_{\mathbb{Q}}(\it Y\rm),\,\,\,
\it m\mapsto D_{m}. 
\end{eqnarray*}
It is
upper convex, positively homogeneous, and verifies,
for any $m\in\sigma^{\vee}_{M}$, the equality
\begin{eqnarray*}
A_{m} = H^{0}(C,\mathcal{O}_{C}(\lfloor D_{m}\rfloor)). 
\end{eqnarray*}
By Lemma $2.11$, this map is piecewise 
linear (see [AH06, $2.11$]). Equivalently, 
$m\mapsto D_{m}$ is the evaluation map of a polyhedral divisor.
The following proof will specify this idea.
\begin{proof}[Proof of $2.5\,\rm (ii)$]
By $2.7$ we may consider a system of homogeneous generators 
\begin{eqnarray*}
f = (f_{1}\chi^{m_{1}},\ldots,f_{r}\chi^{m_{r}}) 
\end{eqnarray*}
of $A$, with 
nonzero vectors $m_{1},\ldots,m_{r}\in M$. We use
the same notation as in $2.4$. Denote by
$\mathfrak{D}$ the $\sigma$-polyhedral 
divisor $\mathfrak{D}[f]$.
Let us show that $A = A_{0}[\mathfrak{D}]$.
Let $L = \mathbb{Q}_{\geq 0}\cdot m$ be a half-line 
contained in $\omega = \sigma^{\vee}$, with $m$
being the primitive vector of $L$. 
By Lemma $2.9$, the 
graded subalgebra
\begin{eqnarray*}
A_{L}:=\bigoplus_{m'\in L\cap M}A_{m'}\chi^{m'}\subset K_{0}[\chi^{m}]  
\end{eqnarray*}
is normal, noetherian, and has the same field of fractions as 
that of $K_{0}[\chi^{m}]$. 
Furthermore, with the same notation as in 
$2.10$, the algebra $A_{L}$ is generated by the set
\begin{eqnarray*}
\left\{\,
\prod_{i = 1}^{r}(f_{i}\chi^{m_{i}})^{s_{i}},\,\,
(s_{1},\ldots,s_{r})\in\mathscr{H}_{L}^{\star}\,
\right\}. 
\end{eqnarray*}
By Lemma $1.7$, if 
\begin{eqnarray*}
D_{m} : = -\min_{(s_{1},\ldots,s_{r})
\,\in\,\mathscr{H}_{L}^{\star}}
\,\frac{\sum_{i=1}^{r}s_{i}\,\rm div\,\it f_{i}}
{\lambda(s_{\rm 1\it},\ldots, s_{r})}, 
\end{eqnarray*}
then $A_{L} = A_{0}[D_{m}]$ with respect to 
the variable $\chi^{m}$. 
By Lemma $2.11$, for any closed point $z\in Z$ we have
\begin{eqnarray*}
h_{z}[f](m) = \min\langle m, \Delta_{z}[f]\rangle =  
-\min_{(s_{1},\ldots,s_{r})
\,\in\,\mathscr{H}_{L}^{\star}}
\,\frac{\sum_{i=1}^{r}s_{i}\,\rm ord_{\it z}\it\,f_{i}}
{\it \lambda(s_{\rm 1}\it,\ldots, s_{r})}\,.
\end{eqnarray*}
Hence $\mathfrak{D}(m) = D_{m}$. 
Since this equality holds for all primitive 
vectors belonging to $\sigma^{\vee}$, 
we may conclude that $A = A_{0}[\mathfrak{D}]$.
The uniqueness of $\mathfrak{D}$ is straightforward (see Lemma $1.4$
and [Lan13, $2.2$]).
\end{proof}
Using well-known facts about Dedekind domains we obtain the following result.
\begin{proposition}
Let $A_{0}$ be a Dedekind domain and let $B_{0}$ be the integral closure of
$A_{0}$ in a finite separable extension $L_{0}/K_{0}$, where $K_{0} = \rm Frac\,\it A_{\rm 0}$.
Let $\mathfrak{D} = \sum_{z\in Z}\Delta_{z}\cdot z$ be a polyhedral divisor over $A_{0}$,
where $Z\subset Y = \rm Spec\,\it A_{\rm 0}$ is the subset of closed points. Let
$Y' = \rm Spec\,\it B_{\rm 0}$ and consider the natural projection $p:Y'\rightarrow Y$. Then
$B_{0}$ is a Dedekind domain and we have the formula
\begin{eqnarray*}
A_{0}[\mathfrak{D}]\otimes_{A_{0}}B_{0} = B_{0}[p^{\star}\mathfrak{D}]\rm\,\,\, with
\,\,\,\it p^{\star}\mathfrak{D} = \sum_{z\in Z}\rm \Delta_{\it z}\it\cdot p^{\star}(z). 
\end{eqnarray*}
\end{proposition}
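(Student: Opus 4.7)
The plan breaks naturally into two independent parts: verifying that $B_{0}$ is Dedekind, and establishing the graded identification of the two algebras.

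For the first part I would invoke the classical theory of integral closure in finite separable extensions. Separability of $L_{0}/K_{0}$ makes the trace form non-degenerate, which sandwiches $B_{0}$ between two finitely generated free $A_{0}$-submodules of $L_{0}$; hence $B_{0}$ is a finitely generated $A_{0}$-module and in particular Noetherian. It is integrally closed in $L_{0}$ by construction, and every nonzero prime of $B_{0}$ contracts to a nonzero (hence maximal) prime of $A_{0}$ and is itself maximal by going-up applied to the integral extension $A_{0}\subset B_{0}$. This verifies the three axioms of Definition~$1.1$.

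For the formula, both $A_{0}[\mathfrak{D}]\otimes_{A_{0}}B_{0}$ and $B_{0}[p^{\star}\mathfrak{D}]$ are $M$-graded $B_{0}$-subalgebras of $L_{0}[M]$ with the same weight cone $\sigma^{\vee}$, so it is enough to prove that the weight-$m$ homogeneous piece on each side coincide for every $m\in\sigma^{\vee}\cap M$. On the left side the weight-$m$ piece is the $B_{0}$-module $A_{m}\otimes_{A_{0}}B_{0}$, where $A_{m}=H^{0}(Y,\mathcal{O}_{Y}(\lfloor\mathfrak{D}(m)\rfloor))$ is a fractional ideal of $A_{0}$ by Theorem~$1.4$. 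The factorization $\mathfrak{p}_{z}B_{0}=\prod_{z'\mid z}\mathfrak{p}_{z'}^{e_{z'/z}}$, combined with the surjectivity clause of Theorem~$1.4$, translates extension of scalars from $A_{0}$ to $B_{0}$ at the level of fractional ideals into the pullback $p^{\star}$ at the level of Weil divisors. On the right side, unwinding the defining relation $p^{\star}\mathfrak{D}=\sum_{z}\Delta_{z}\cdot p^{\star}(z)$ gives
\[
(p^{\star}\mathfrak{D})(m)=\sum_{z}h_{z}(m)\,p^{\star}(z)=p^{\star}(\mathfrak{D}(m)),
\]
so that the weight-$m$ piece is $H^{0}(Y',\mathcal{O}_{Y'}(\lfloor p^{\star}\mathfrak{D}(m)\rfloor))$.

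Matching the two sides thus reduces to a prime-by-prime comparison of the integral divisors $p^{\star}\lfloor\mathfrak{D}(m)\rfloor$ and $\lfloor p^{\star}\mathfrak{D}(m)\rfloor$ on $Y'$, which I would carry out by unfolding the definitions of $p^{\star}(z)$ and the evaluation function $h_{z}$ at each closed point above $z$. The main subtlety I expect is precisely this last reconciliation: flooring does not commute with pullback in general, and the content of the identity lies in how the fractional contributions produced by $p^{\star}$ are absorbed by the fractional ideals attached to $A_{m}$ via Theorem~$1.4$. Once this prime-by-prime calculation is in place, the equality of all weight-$m$ components gives the desired formula $A_{0}[\mathfrak{D}]\otimes_{A_{0}}B_{0}=B_{0}[p^{\star}\mathfrak{D}]$.
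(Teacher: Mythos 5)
Your preliminary steps are fine: the trace-form argument that $B_{0}$ is module-finite over $A_{0}$, hence Noetherian, integrally closed and of dimension one; the reduction to graded pieces $A_{0}[\mathfrak{D}]\otimes_{A_{0}}B_{0}=\bigoplus_{m}(A_{m}\otimes_{A_{0}}B_{0})\chi^{m}$; the identification $A_{m}\otimes_{A_{0}}B_{0}\simeq A_{m}B_{0}=H^{0}\bigl(Y',\mathcal{O}_{Y'}(p^{\star}\lfloor\mathfrak{D}(m)\rfloor)\bigr)$, since extending an invertible fractional ideal multiplies valuations by the ramification indices; and the identity $(p^{\star}\mathfrak{D})(m)=p^{\star}(\mathfrak{D}(m))$. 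But the step you postpone, comparing $p^{\star}\lfloor\mathfrak{D}(m)\rfloor$ with $\lfloor p^{\star}\mathfrak{D}(m)\rfloor$, is not a routine verification whose fractional discrepancies get ``absorbed'' by $A_{m}$: coefficientwise one only has $e_{z'/z}\lfloor h_{z}(m)\rfloor\leq\lfloor e_{z'/z}h_{z}(m)\rfloor$, so in general you obtain only an inclusion $A_{0}[\mathfrak{D}]\otimes_{A_{0}}B_{0}\subseteq B_{0}[p^{\star}\mathfrak{D}]$, and nothing on the left can produce the missing sections. Concretely, take $A_{0}=\mathbb{Z}$, $L_{0}=\mathbb{Q}(i)$, $B_{0}=\mathbb{Z}[i]$, $N=M=\mathbb{Z}$, $\sigma=\mathbb{Q}_{\geq 0}$ and $\mathfrak{D}=(\frac{1}{2}+\sigma)\cdot(2)$. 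Then $A_{m}=2^{-\lfloor m/2\rfloor}\mathbb{Z}$, so the left-hand side is $\mathbb{Z}[i][\chi,\chi^{2}/2]$, which is not even normal, while $(2)=(1+i)^{2}$ ramifies, so with $z'=(1+i)$ one gets $(p^{\star}\mathfrak{D})(m)=m\cdot z'$ and the right-hand side is $\mathbb{Z}[i][(1+i)^{-1}\chi]$; already in degree one the pieces are $\mathbb{Z}[i]$ versus $(1+i)^{-1}\mathbb{Z}[i]$. (Reading $p^{\star}(z)$ without multiplicities does not help: then degree two fails in the opposite direction.) So the prime-by-prime reconciliation you plan to carry out cannot succeed unconditionally; this is a genuine gap, and in fact the asserted equality needs a hypothesis.

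For comparison, the paper gives no argument at all here (the proposition is introduced with ``using well known facts on Dedekind domains''), so there is no written proof to measure yours against; but in the two places where base change is actually used, Example $2.13$ ($\mathbb{C}[t]$ over $\mathbb{R}[t]$) and Proposition $3.9\,\mathrm{(iii)}$ (constant-field extension over a perfect field), the morphism $p$ is unramified, and then $\lfloor p^{\star}\mathfrak{D}(m)\rfloor=p^{\star}\lfloor\mathfrak{D}(m)\rfloor$ holds coefficientwise and your outline closes without further work. To make your proof (and the statement) correct, either add such an unramifiedness hypothesis (for instance $p$ \'etale, or at least $e_{z'/z}=1$ over every $z$ where $\mathfrak{D}(m)$ can have non-integral coefficients), or weaken the conclusion to: $B_{0}[p^{\star}\mathfrak{D}]$ is the normalization of $A_{0}[\mathfrak{D}]\otimes_{A_{0}}B_{0}$. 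The latter follows from your inclusion together with two observations: for homogeneous $f\chi^{m}$ in the right-hand side and $k$ with $k\mathfrak{D}(m)$ integral one has $f^{k}\chi^{km}\in A_{km}B_{0}$, so the right-hand side is integral over the left; and it is normal with the same field of fractions by Theorem $2.5\,\mathrm{(i)}$.
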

\begin{exemple}
Consider the polyhedral divisor 
\begin{eqnarray*}
 \mathfrak{D} = \Delta_{(t)}\cdot (t) + \Delta_{(t^{2}+1)}\cdot (t^{2}+1)
\end{eqnarray*}
over the Dedekind ring $A_{0} = \mathbb{R}[t]$, where the coefficients are     
\begin{eqnarray*}
\Delta_{(t)} = (-1,0) + \sigma,\,\,\, \Delta_{(t^{2}+1)} = [(0,0), (0,1)] + \sigma,
\end{eqnarray*}
and where $\sigma\subset \mathbb{Q}^{2}$ is the rational cone generated by $(1,0)$ and $(1,1)$.
An easy computation shows that 
\begin{eqnarray*}
A_{0}[\mathfrak{D}] = \mathbb{R}\left[t, -t\chi^{(1,0)}, \chi^{(0,1)}, t(t^{2}+1)\chi^{(1,-1)}\right]\simeq 
\frac{\mathbb{R}[x_{1},x_{2},x_{3},x_{4}]}{((1 + x^{2}_{1})x_{2} + x_{3}x_{4})},
\end{eqnarray*}
where $x_{1},x_{2},x_{3},x_{4}$ are independent variables over $\mathbb{R}$. 
Let $\zeta = \sqrt{-1}$. Considering the natural projection 
$p:\mathbb{A}^{1}_{\mathbb{C}}\rightarrow \mathbb{A}^{1}_{\mathbb{R}}$
we obtain
\begin{eqnarray*}
p^{\star}\mathfrak{D} = \Delta_{0}\cdot 0 + \Delta_{(t^{2}+1)}\cdot \zeta + \Delta_{(t^{2}+1)}\cdot (-\zeta).
\end{eqnarray*}
Letting $B_{0} = \mathbb{C}[t]$ one concludes that 
$A_{0}[\mathfrak{D}]\otimes_{\mathbb{R}}\mathbb{C} = B_{0}[p^{\star}\mathfrak{D}]$.
\end{exemple}

\section{Multigraded algebras and algebraic function fields}
In this section, we study another type of multigraded algebras.
They are described by a proper polyhedral divisor over an
algebraic function field in one variable. Fix an arbitrary field $k$.
Recall that an \em algebraic function field \rm (in one variable)
over $k$ is a finitely generated field extension $K_{0}/k$
of transcendence degree one with the property that $k$ is algebraically closed in $K_{0}$.
\begin{rappel}
By virtue of our convention, a regular projective curve $C$ over 
$k$ yields an algebraic function field $K_{0}/k$, where $K_{0} = k(C)$. 
As an application of the valuative criterion
of properness (see [EGA II, Section $7.4$]), every algebraic function field $K_{0}/k$
is the field of rational functions of a unique (up to isomorphism)
regular projective curve $C$  over $k$. 
\end{rappel}
In the next paragraph, we recall the 
construction of the curve $C$ starting from an algebraic function field $K_{0}$.
\begin{rappel}
A \em valuation ring \rm of $K_{0}$ is 
a proper subring $\mathcal{O} \subset K_{0}$
strictly containing $k$ and such that
for any nonzero element $f\in K_{0}$, either 
$f\in \mathcal{O}$ or $\frac{1}{f}\in\mathcal{O}$. 
By [Sti93, $1.1.6$]
every valuation ring of $K_{0}$ is the
ring associated to a discrete valuation of 
$K_{0}/k$. A subset $P\subset K_{0}$
is called a \em place \rm of $K_{0}$
if there is some valuation ring $\mathcal{O}$
of $K_{0}$ such that $P$ is the
maximal ideal of $\mathcal{O}$. We denote
by $\mathscr{R}_{k}\, K_{0}$ the 
set of places of $K_{0}$. The latter is called 
the \em Riemann surface \rm of $K_{0}$.
By [EGA II, $7.4.18$] the set 
$\mathscr{R}_{k}\, K_{0}$ can be 
identified with the (closed) points of a regular projective curve $C$
over the field $k$
such that $K_{0} = k(C)$.
\end{rappel}
In the sequel we consider 
$C =\mathscr{R}_{k}\, K_{0}$ 
as a geometrical object with its
structure of scheme. By convention an element $z$
belonging to $C$ is a closed point.
We write $P_{z}$ the 
place associated to a point $z\in C$. 
Note that we keep the notation of convex geometry introduced 
in $2.1$.
\begin{rappel}
Let $M,N$ be a pair of dual lattices, and
let $\sigma\subset N_{\mathbb{Q}}$ be a strongly 
convex polyhedral cone. 
A \em $\sigma$-polyhedral divisor \rm over $K_{0}$ (or over $C$) is a formal sum
$\mathfrak{D} = \sum_{z\in C}\Delta_{z}\cdot z$
with $\Delta_{z} \in\rm Pol_{\it \sigma}(\it N_{\mathbb{Q}})$ 
and $\Delta_{z} = \sigma$ for all but finitely many $z\in C$.
Again we let
\begin{eqnarray*}
\mathfrak{D}(m) = \sum_{z\in C}\min_{v\in\Delta_{z}}\langle m, v\rangle\cdot z 
\end{eqnarray*}
be the \em evaluation \rm in $m\in\sigma^{\vee}$; that is, a $\mathbb{Q}$-divisor over the 
curve $C$. 
We let $\kappa(P) = \mathcal{O}/P$, where 
$\mathcal{O}$ is the valuation ring of a place $P$. 
The field $\kappa(P)$
is a finite extension of $k$ 
[Sti93, $1.1.15$]; we call it the \em residue field \rm of $P$.
We denote by $[\kappa(P):k]$ the dimension of the 
$k$-vector space $\kappa(P)$; the latter is also called the degree of the place $P$.
Recall that we define the \emph{degree} of a $\mathbb{Q}$-divisor $D = \sum_{z\in C}a_{z}\cdot z$
to be the rational number $${\rm deg}\,D = \sum_{z\in C}[\kappa(P_{z}):k]\cdot a_{z}.$$
Similarly, the \em degree \rm of the polyhedral divisor $\mathfrak{D}$ is the 
Minkowski sum
\begin{eqnarray*}
\rm deg\it\, \mathfrak{D} = 
\sum_{z\in C}[\kappa(P_{z}):k]\cdot
\rm\Delta_{\it z}. 
\end{eqnarray*}
Given $m\in\sigma^{\vee}$ we have naturally the
relation 
$\it (\rm deg\it\, \mathfrak{D} )(m) = 
\rm deg\it\, \mathfrak{D}(m)$. 
\end{rappel}
We can now introduce the notion of properness for polyhedral divisors (see [AH06, $2.7$, $2.12$]).
\begin{definition}
A $\sigma$-polyhedral divisor $\mathfrak{D} = 
\sum_{z\in C}\Delta_{z}\cdot z$ is called 
\em proper \rm if it satisfies the following
conditions.
\begin{enumerate}
 \item[(i)] The polyhedron $\rm deg\it\, \mathfrak{D}$
is strictly contained in the cone $\sigma$.
\item[(ii)] If $\rm deg\it\, \mathfrak{D}(m) \rm = 0$,
then $m$ belongs to the boundary of $\sigma^{\vee}$ and
a multiple of $\mathfrak{D}(m)$ is principal. 
\end{enumerate}
\end{definition}
Our next main result gives a description similar
to that in $2.5$ for algebraic function fields.
For a proof of $3.5\,\rm (iii)$ we refer to the argument of
[Lan13, $2.4$].
\begin{theorem}
Let $k$ be a field, and let 
$C = \mathscr{R}_{k}\, K_{0}$ be the Riemann
surface of an algebraic function field $K_{0}/k$. 
Then the following statements hold.
\begin{enumerate}
 \item[(i)]Let 
\begin{eqnarray*}
A = \bigoplus_{m\in\sigma^{\vee}_{M}}
A_{m}\chi^{m} 
\end{eqnarray*}
be an $M$-graded normal noetherian $k$-subalgebra of 
$K_{0}[M]$ with weight cone $\sigma^{\vee}$ and $A_{0} = k$.
We assume that for all $m\in\sigma^{\vee}_{M}$, $A_{m}\subset K_{0}$.
If $A$ and $K_{0}[M]$ have the same field of fractions,
then there exists a unique proper
$\sigma$-polyhedral divisor $\mathfrak{D}$ over $C$ 
such that $A = A[C,\mathfrak{D}]$, where
\begin{eqnarray*}
A[C,\mathfrak{D}] = \bigoplus_{m\in\sigma^{\vee}_{M}}
H^{0}(C,\mathcal{O}_{C}(\lfloor \mathfrak{D}(m)\rfloor))\chi^{m}. 
\end{eqnarray*}
\item[(ii)]
Let $\mathfrak{D}$ be a proper $\sigma$-polyhedral divisor over $C$.
Then the algebra $A[C,\mathfrak{D}]$ is $M$-graded, normal, and
finitely generated
with weight cone $\sigma^{\vee}$. Furthermore it
 has the same field of fractions as $K_{0}[M]$.
\item[(iii)]  
Let
\begin{eqnarray*}
A = k[f_{1}\chi^{m_{1}},\ldots, f_{r}\chi^{m_{r}}] 
\end{eqnarray*}
be an $M$-graded subalgebra of $K_{0}[M]$, where the
 $f_{i}\chi^{m_{i}}$ is a homogeneous element of nonzero degree $m_{i}$. 
Let $f = (f_{1}\chi^{m_{1}},\ldots, f_{r}\chi^{m_{r}} )$.
Assume that $A$ and $K_{0}[M]$ have the same field of fractions.
Then $\mathfrak{D}[f]$ is the proper $\sigma$-polyhedral
divisor such that the normalization of $A$ is $A[C,\mathfrak{D}[f]]$ \rm (see $2.4$).
\end{enumerate}
\end{theorem}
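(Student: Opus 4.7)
The plan is to mirror the strategy of Theorem $2.5$, replacing the affine scheme $\mathrm{Spec}\, A_{0}$ by the projective curve $C$ and using the properness of $\mathfrak{D}$ in place of finite generation over a Dedekind ring. Assertion $(\mathrm{iii})$ may be taken as given, by the reference to [La, $2.4$]. The proof of $(\mathrm{ii})$ is the substantive part, and $(\mathrm{i})$ then follows easily from $(\mathrm{iii})$ together with Theorem $2.7$.

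For $(\mathrm{ii})$, I would first verify normality by the same intersection-of-valuation-rings argument as in the proof of Theorem $2.5\,(\mathrm{i})$: each pair $(z,v)$ with $z \in C$ and $v \in \Delta_{z}$ defines a discrete valuation $\nu_{z,v}$ on $K_{0}[M]$, and $A[C,\mathfrak{D}]$ is the intersection of $K_{0}[M]$ with all the local rings $\mathcal{O}_{v,z}$. Next, to check that $\mathrm{Frac}\, A[C,\mathfrak{D}] = \mathrm{Frac}\, K_{0}[M]$, I would use properness: condition $(\mathrm{ii})$ furnishes, for each weight $m$ on the boundary of $\sigma^{\vee}$ with $\deg \mathfrak{D}(m) = 0$, a nonzero homogeneous element of weight an integral multiple of $m$ coming from the principal multiple of $\mathfrak{D}(m)$, while condition $(\mathrm{i})$ forces $\deg \mathfrak{D}(nm) \to +\infty$ for $m$ in the relative interior of $\sigma^{\vee}$, so that graded pieces are nonzero for large $n$ and ratios of such elements of equal weight recover all of $K_{0}$. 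For finite generation, I would pick a point $z_{0} \in C$ with $\Delta_{z_{0}} = \sigma$ (possible since $C$ has infinitely many closed points while $\mathfrak{D}$ has only finitely many non-trivial coefficients), set $U = C \setminus \{z_{0}\}$, and note that $U$ is affine with a Dedekind coordinate ring $A_{0}$. Theorem $2.5$ applied to the restriction $\mathfrak{D}|_{U}$ yields a finitely generated algebra $A_{0}[\mathfrak{D}|_{U}]$, and $A[C,\mathfrak{D}]$ sits inside $A_{0}[\mathfrak{D}|_{U}]$ as the subalgebra of those homogeneous elements whose order at $z_{0}$ is non-negative. The strict containment $\deg \mathfrak{D} \subsetneq \sigma$ produces an ampleness-type property for $\mathfrak{D}(m_{0})$ with $m_{0}$ in the interior of $\sigma^{\vee}$, and a Serre-type boundedness argument then upgrades this to finite generation of $A[C,\mathfrak{D}]$ itself.

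For $(\mathrm{i})$, since $A$ is a noetherian $\mathbf{k}$-algebra with $A_{0} = \mathbf{k}$, Theorem $2.7$ implies that $A$ is finitely generated over $\mathbf{k}$. Choose a finite system of homogeneous generators $f_{1}\chi^{m_{1}}, \ldots, f_{r}\chi^{m_{r}}$ of nonzero weight; the hypothesis on the fields of fractions ensures that the $m_{i}$ generate $M$. Applying assertion $(\mathrm{iii})$, the normalization of $\mathbf{k}[f_{1}\chi^{m_{1}}, \ldots, f_{r}\chi^{m_{r}}]$ equals $A[C,\mathfrak{D}[f]]$ for a proper $\sigma$-polyhedral divisor $\mathfrak{D}[f]$, and since $A$ is already normal we conclude $A = A[C,\mathfrak{D}[f]]$. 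Uniqueness follows the pattern of Theorem $2.5$: comparing graded pieces of $A[C,\mathfrak{D}]$ and $A[C,\mathfrak{D}']$ as subspaces of $K_{0}$ and applying the correspondence of Theorem $1.4$ locally on any affine chart forces $\lfloor n\mathfrak{D}(m)\rfloor = \lfloor n\mathfrak{D}'(m)\rfloor$ for every $n \geq 1$ and every $m \in \sigma^{\vee}_{M}$, hence $\mathfrak{D}(m) = \mathfrak{D}'(m)$ for all $m \in \sigma^{\vee}$, and finally $\Delta_{z} = \Delta'_{z}$ for each $z \in C$ by convex duality.

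The main obstacle will be the finite-generation step in $(\mathrm{ii})$: passing from the Dedekind-ring algebra $A_{0}[\mathfrak{D}|_{U}]$ on the affine chart to the globally defined $A[C,\mathfrak{D}]$ requires carefully exploiting both parts of the properness condition, the strict containment $\deg \mathfrak{D} \subsetneq \sigma$ providing ampleness in the interior and the principal-multiple condition controlling the boundary weights. Once this is secured, the uniqueness and the identification in $(\mathrm{i})$ are formal consequences of $(\mathrm{iii})$ and the piecewise-linearity of evaluation.
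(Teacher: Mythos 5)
Your outline gets the easy parts right (normality via the valuations $\nu_{z,v}$, the fraction-field computation from properness, and the existence half of (i) by citing (iii) and normality of $A$ --- this last shortcut is legitimate and in fact shorter than the paper's own route through Corollary $3.8$ and Lemma $2.11$), but the step you yourself flag as the main obstacle, finite generation in (ii), is a genuine gap and not merely a technicality. Embedding $A[C,\mathfrak{D}]$ into $A_{0}[\mathfrak{D}|_{U}]$ for an affine chart $U=C\setminus\{z_{0}\}$ proves nothing by itself: a subalgebra of a finitely generated algebra need not be finitely generated, and the phrase ``a Serre-type boundedness argument then upgrades this'' is exactly the missing proof. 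In particular your sketch never says how the two halves of properness are actually used: one needs, for each full-dimensional regular subcone $\omega_{i}$ of $\sigma^{\vee}$ meeting $\mathrm{relint}\,\sigma^{\vee}$ on which $m\mapsto\mathfrak{D}(m)$ is linear, an integer $d$ such that each $\mathfrak{D}(de_{j})$ is an integral globally generated divisor (positive degree in the interior directions, a principal multiple on boundary rays with $\deg\mathfrak{D}(m)=0$); the paper then generates a subalgebra by these global sections together with a Veronese piece along an interior ray (finitely generated by Lemma $3.7\,\mathrm{(i)}$), identifies its normalization with $A_{\omega_{i},d}$ via part (iii), invokes finiteness of the integral closure [Bou, V.3.2] to get $A_{\omega_{i}}$ finitely generated, and concludes by the surjection $A_{\omega_{1}}\otimes\cdots\otimes A_{\omega_{s}}\rightarrow A$. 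None of this (nor any substitute for it) appears in your proposal, and the boundary rays are precisely where a naive ampleness argument breaks down.

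There is a second, smaller flaw in your uniqueness argument for (i). You propose to compare graded pieces and ``apply the correspondence of Theorem $1.4$ locally on any affine chart'' to deduce $\lfloor n\mathfrak{D}(m)\rfloor=\lfloor n\mathfrak{D}'(m)\rfloor$. This does not work on a projective curve: the graded pieces are global sections, equality of $H^{0}(C,\mathcal{O}_{C}(\lfloor n\mathfrak{D}(m)\rfloor))$ and $H^{0}(C,\mathcal{O}_{C}(\lfloor n\mathfrak{D}'(m)\rfloor))$ for all $n$ does not determine the divisors (both can vanish, for instance), and you cannot recover sections over a chart from global sections. The correct tool is Lemma $3.7\,\mathrm{(ii)}$, which yields $D_{1}\le D_{2}$ from the inclusions of section spaces only under the hypothesis that each $D_{i}$ has positive degree or a principal multiple --- and it is exactly the properness of $\mathfrak{D}$ and $\mathfrak{D}'$ that makes this hypothesis available. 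Your sketch never invokes properness at this point, so the uniqueness claim as written is unsupported.
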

For the proof of $3.5$ we need some preliminary results. We begin by collecting some properties of
an $M$-graded algebra $A$ as in $3.5\,\rm (i)$ to some graded subring $A_{L}$.

\begin{lemme}
Let $A$ be an $M$-graded algebra satisfying 
the assumptions of $3.5\,\rm (i)$. 
Given a half-line $L = \mathbb{Q}_{\geq 0}\cdot m\subset \sigma^{\vee}$ with
a primitive vector $m$, consider the subalgebra
\begin{eqnarray*}
A_{L}  = \bigoplus_{m'\in L\cap M}A_{m'}\chi^{m'}. 
\end{eqnarray*}
Let
\begin{eqnarray*}
Q(A_{L})_{0}  = \left\{\,\frac{a}{b}\,|\,\,\, 
a\in A_{sm},\, b\in A_{sm},\, b\neq 0,
\,s\geq 0 \,\right\}. 
\end{eqnarray*}
Then the following assertions hold.
\begin{enumerate}
\item[(i)] The algebra $A_{L}$ is finitely generated and normal.
\item[(ii)] Either $Q(A_{L})_{0} = k$ or $Q(A_{L})_{0}  = K_{0}$.
\item[(iii)] If $Q(A_{L})_{0} = k$, then 
$A_{L} = k[\beta \chi^{dm}]$ for some $\beta\in K_{0}^{\star}$
and some $d\in\mathbb{Z}_{>0}$.   
\end{enumerate}
\end{lemme}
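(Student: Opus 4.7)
For part $\mathrm{(i)}$, my plan is to mirror the proof of Lemma $2.9$. Writing $A_L = A \cap K_0[\chi^m]$ inside $K_0[M]$, normality follows from the standard intersection argument: any element of $\mathrm{Frac}(A_L)$ integral over $A_L$ is integral over each of the normal rings $A$ and $K_0[\chi^m]$, so it lies in both, hence in their intersection $A_L$. For noetherianity I would apply the argument of [AH, Lemma $4.1$] to the half-line grading; Theorem $2.7$ then yields finite generation over $\mathbf{k} = (A_L)_0$.

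For part $\mathrm{(ii)}$, the key observation is that $A_L$ sits inside $K_0[\chi^m]$, which has transcendence degree $2$ over $\mathbf{k}$, so $\dim A_L \in \{0,1,2\}$. If $\dim A_L \leq 1$, then any nonzero positive-degree homogeneous element of $A_L$ is transcendental over the degree-zero subfield $Q(A_L)_0$ of $\mathrm{Frac}(A_L)$, forcing $Q(A_L)_0$ to be algebraic over $\mathbf{k}$; since $\mathbf{k}$ is algebraically closed in $K_0 \supset Q(A_L)_0$, we conclude $Q(A_L)_0 = \mathbf{k}$. If $\dim A_L = 2$, then $Q(A_L)_0$ has transcendence degree $1$ over $\mathbf{k}$, and to show $Q(A_L)_0 = K_0$ I would exploit the assumption $\mathrm{Frac}(A) = K_0(M)$: any $f \in K_0$ can be expressed as $f = p/q$ with $p, q \in A_{m_0}$ of the same degree $m_0 \in \sigma^\vee \cap M$; for $s$ sufficiently large, $sm - m_0 \in \sigma^\vee \cap M$ and by the analog of Lemma $2.9\,\mathrm{(i)}$ a nonzero $r \in A_{sm - m_0}$ exists, giving $f = (pr)/(qr) \in Q(A_L)_0$ and hence $K_0 \subset Q(A_L)_0$.

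For part $\mathrm{(iii)}$, if $Q(A_L)_0 = \mathbf{k}$ then by $\mathrm{(ii)}$ we have $\dim A_L \leq 1$. A normal finitely generated $\mathbb{N}$-graded $\mathbf{k}$-algebra of dimension at most one whose neutral piece is $\mathbf{k}$ is a polynomial ring in a single positive-degree homogeneous variable; inside $K_0[\chi^m]$ such a variable must take the form $\beta \chi^{dm}$ with $\beta \in K_0^{\star}$ and $d \in \mathbb{Z}_{>0}$, yielding $A_L = \mathbf{k}[\beta \chi^{dm}]$.

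The main obstacle is the $\dim A_L = 2$ case of $\mathrm{(ii)}$ when $L$ lies on a proper face of $\sigma^\vee$: the shift $sm - m_0 \in \sigma^\vee$ may fail for $m_0$ outside that face. I would circumvent this by passing to the smallest face $\tau$ containing $L$, establishing $Q(A_L)_0 = Q(A_\tau)_0$ via the interior-rescaling argument applied inside $A_\tau$, and then handling $Q(A_\tau)_0$ by induction on $\dim \tau$, or equivalently by applying the classical elliptic D.P.D. construction to $A_L$ together with a Riemann-Roch degree comparison to identify $C$ with $\mathrm{Proj}(A_L)$.
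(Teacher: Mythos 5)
Your parts (i) and (iii) are essentially compatible with the paper (for (i) the paper also reduces to the argument of Lemma $2.9\,\rm(ii)$), but part (ii) has a genuine gap exactly at the point you flag yourself: the case where $Q(A_L)_0$ has transcendence degree one and $L$ lies in a proper face of $\sigma^{\vee}$. Your rescaling trick $f=(pr)/(qr)$ needs $sm-m_0\in\sigma^{\vee}$, and your proposed repair does not close the hole: reducing to the minimal face $\tau$ containing $m$ only identifies $Q(A_L)_0$ with $Q(A_\tau)_0$, and the ``induction on $\dim\tau$'' has no inductive step, because the same rescaling cannot relate $Q(A_\tau)_0$ to $Q(A_{\tau'})_0$ for a larger face $\tau'$ (one cannot subtract a weight $m_0\notin\operatorname{span}\tau$ and stay in $\sigma^{\vee}$; e.g.\ $\sigma^{\vee}=\mathbb{Q}_{\geq0}^2$, $m=(1,0)$, $m_0=(1,1)$). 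The ``elliptic D.P.D.\ plus Riemann--Roch'' alternative merely re-encodes $Q(A_L)_0$ as the function field of $\operatorname{Proj}A_L$; it gives no mechanism to exclude a proper intermediate field $\mathbf{k}\subsetneq Q(A_L)_0\subsetneq K_0$ with $K_0$ finite over it, which is precisely what must be ruled out. The paper's argument is uniform and avoids any interior/boundary dichotomy: since $K_0$ is algebraic over $Q(A_L)_0$, take $\alpha\in K_0$, write an algebraic equation whose coefficients are ratios $p_i/q_i$ of same-degree homogeneous elements of $A_L$, and multiply through by $q=\prod q_i$; then $\alpha q$ is integral over $A_L$, lies in $K_0[\chi^m]\subset\operatorname{Frac}(A)$, hence by normality of $A$ (equivalently, integral closedness of $A_L=A\cap K_0[\chi^m]$ in $K_0(\chi^m)$) one gets $\alpha q\in A_L$ and $\alpha=(\alpha q)/q\in Q(A_L)_0$. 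This integrality-plus-normality step is the missing idea in your proposal; no positivity of $m$ is ever used.

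Two secondary points. First, your appeal to ``the analog of Lemma $2.9\,\rm(i)$'' is false in the elliptic setting: graded pieces $A_{m'}$ can vanish for $m'\in\sigma^{\vee}_M$ (boundary weights whose evaluation divisor has degree zero and is non-principal), so even your interior case needs a different justification of $A_{sm-m_0}\neq0$; it can be salvaged from finite generation of the weight semigroup (which contains a translate of its saturation), but not by quoting $2.9\,\rm(i)$. Second, in (iii) the classification you invoke is true but is exactly the content the paper proves: $Q(A_L)_0=\mathbf{k}$ forces each graded piece to be one-dimensional, the occurring degrees are handled via their gcd, and an element $\beta\chi^{dm}=\prod(f_{s_i}\chi^{s_i dm})^{l_i}$ with $\sum l_is_i=1$ is shown to lie in $A_L$ by normality, whence $A_L=\mathbf{k}[\beta\chi^{dm}]$; quoting the statement as known leaves this computation, which is the substance of (iii), unproved.
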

\begin{proof}
The proof of $(i)$ is similar to that of $2.9\,\rm (ii)$ and so we
omitted it.

The field $Q(A_{L})_{0}$ is an extension of
$k$ contained in $K_{0}$. If the transcendence
degree of $Q(A_{L})_{0}$ over $k$ is zero,
then by normality of $A_{L}$
 we have $Q(A_{L})_{0} = k$.
Otherwise the extension $K_{0}/Q(A_{L})_{0}$ is algebraic. Let $\alpha$
be an element of $K_{0}$.
Then there exist $a_{1},\ldots, a_{d}\in Q(A_{L})_{0}$
with $a_{d}\neq 0$ such that 
\begin{eqnarray*}
\alpha^{d} = \sum_{j = 1}^{d}a_{j}\alpha^{d-j}. 
\end{eqnarray*}
Let
\begin{eqnarray*}
I = \{i\in\{1,\ldots,d\},\,\,a_{i}\neq 0\}. 
\end{eqnarray*}
For any $i\in I$ we write $a_{i} = \frac{p_{i}}{q_{i}}$
with $p_{i}, q_{i}\in A_{L}$ being homogeneous of the same
degree. Considering $q =\prod_{i\in I}q_{i}$ we obtain
the equality
\begin{eqnarray*}
(\alpha q)^{d} = \sum_{j = 1}^{d}a_{j}q^{j}(\alpha q)^{d-j}. 
\end{eqnarray*}
The normality of $A_{L}$ gives $\alpha q\in A_{L}$. Thus $\alpha = \alpha q/q\in Q(A_{L})_{0}$. 

To show $\rm (iii)$ we let $S\subset\mathbb{Z}\cdot m$ be the
weight semigroup of the graded algebra $A_{L}$. Since $L$ is contained
in the weight cone $\sigma^{\vee}$, $S$ is nonzero. 
Therefore if $G$ is the subgroup
generated by $S$, then there exists $d\in\mathbb{Z}_{>0}$ such
that $G = \mathbb{Z}\,d\cdot m$. 
Letting $u = \chi^{dm}$ we can
write
\begin{eqnarray*}
A_{L} = \bigoplus_{s\geq 0} A_{sdm}u^{s}. 
\end{eqnarray*}
Thus for any homogeneous elements $a_{1}u^{l}, a_{2}u^{l}\in A_{L}$
of the same degree 
we have $\frac{a_{1}}{a_{2}}\in Q(A_{L})^{\star}_{0} = k^{\star}$
so that
\begin{eqnarray*}
A_{L} = \bigoplus_{s\in S'}kf_{s}\, u^{s}, 
\end{eqnarray*}
where $S':=\frac{1}{d}\,S$ and $f_{s}\in k(C)^{\star}$.
Let us fix homogeneous
generators $f_{s_{1}}u^{s_{1}},\ldots ,f_{s_{r}}u^{s_{r}}$ of the 
$G$-graded algebra $A_{L}$. Consider $d' := \rm g.c.d\it (s_{\rm 1\it},\ldots, s_{r})$. 
If $d'>1$, then the inclusion $S\subset dd'\,\mathbb{Z}\cdot m$ yields a 
contradiction. So $d' = 1$ and there are
integers $l_{1},\ldots,l_{r}$ such that $1 = \sum_{i = 1}^{r}l_{i}s_{i}$. The element
\begin{eqnarray*}
 \beta u =\prod_{i = 1}^{r}(f_{s_{i}}u^{s_{i}})^{l_{i}}
\end{eqnarray*}
verifies 
\begin{eqnarray*}
 \frac{(\beta u)^{s_{1}}}{f_{s_{1}}u^{s_{1}}}\in Q(A_{L})^{\star}_{0} 
= k^{\star}.
\end{eqnarray*}
By normality of $A_{L}$, $\beta u\in A_{L}$ and so $A_{L} = 
k[\beta u] = k[\beta \chi^{dm}]$. Now 
$\rm (iii)$ follows.  
\end{proof}
The following lemma is well known. For the main argument we refer 
the reader to [Dem88, Section $3$], [Liu02, \S 7.4.1, Proposition 4.4], or [AH06, $9.1$].
\begin{lemme}
Let $D_{1},D_{2}, D$ be $\mathbb{Q}$-divisors on $C$. Then the following hold.
\begin{enumerate}
 \item[\rm (i)] If $D$ has positive degree, then there
exists $d\in\mathbb{Z}_{>0}$ 
such that the invertible sheaf $\mathcal{O}_{C}(\lfloor dD\rfloor)$ of $\mathcal{O}_{C}$-modules 
is very ample. Furthermore, the graded algebra
\begin{eqnarray*}
B = \bigoplus_{l\geq 0}H^{0}(C,\mathcal{O}_{C}(\lfloor lD\rfloor))t^{l}, 
\end{eqnarray*}
where $t$ is a variable over $k(C)$, is finitely generated. The field
of fractions of $B$ is $k(C)(t)$.
\item[\rm (ii)] Assume that
for $i = 1,2$ we have either $\rm deg\,\it D_{i}
\rm >0$ or $rD_{i}$ is principal for some $r\in\mathbb{Z}_{>0}$.
If for any $s\in\mathbb{N}$ the inclusion
\begin{eqnarray*}
H^{0}(C,
\mathcal{O}_{C}(\lfloor sD_{1}\rfloor))\subset  H^{0}(C,
\mathcal{O}_{C}(\lfloor sD_{2}\rfloor))
\end{eqnarray*}
holds, then we have $D_{1}\leq D_{2}$.
\end{enumerate}
\end{lemme}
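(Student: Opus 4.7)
My plan for (i) is to reduce to the very-ample case. Choose $d\in\mathbb{Z}_{>0}$ both sufficiently divisible (so $dD$ has integer coefficients) and sufficiently large (so $\deg(dD)\geq 2g+1$, where $g$ is the genus of $C$); the Riemann--Roch theorem on smooth projective curves over an arbitrary field then gives very-ampleness of $\mathcal{O}_{C}(\lfloor dD\rfloor)=\mathcal{O}_{C}(dD)$. The section ring $B^{(d)}=\bigoplus_{l\geq 0}H^{0}(C,\mathcal{O}_{C}(ldD))t^{ld}$ is therefore a finitely generated $\mathbf{k}$-algebra by the classical theorem for very ample line bundles. I then decompose $B$ as a graded $B^{(d)}$-module, $B=\bigoplus_{r=0}^{d-1}B_{r}$ with $B_{r}=\bigoplus_{l}H^{0}(C,\mathcal{O}_{C}(\lfloor(ld+r)D\rfloor))t^{ld+r}$; the inequality $\lfloor(ld+r)D\rfloor\leq ldD+\lceil rD\rceil$ shows each $B_{r}$ lies inside the $B^{(d)}$-module $\bigoplus_{l}H^{0}(C,\mathcal{O}_{C}(ldD+\lceil rD\rceil))t^{ld+r}$, which is finitely generated by Serre's theorem on ample twists, and noetherianity of $B^{(d)}$ closes the argument. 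For the fraction-field equality, Riemann--Roch gives $\dim_{\mathbf{k}}H^{0}(C,\mathcal{O}_{C}(\lfloor lD\rfloor))\geq 2$ for $l$ large: ratios of sections of equal degree realise $\mathbf{k}(C)\subset\mathrm{Frac}(B)$, and a B\'ezout combination of nonzero elements of coprime degrees $l_{1}, l_{2}$ recovers $t$.

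For (ii) I argue by contradiction: suppose some place $z_{0}\in C$ satisfies $D_{1}(z_{0})>D_{2}(z_{0})$, and split on the hypothesis for $D_{1}$. If $\deg D_{1}>0$, then for $s$ large enough that $\deg(\lfloor sD_{1}\rfloor-z_{0})>2g-2$ and $s(D_{1}(z_{0})-D_{2}(z_{0}))\geq 1$, Serre duality kills $H^{1}(C,\mathcal{O}_{C}(\lfloor sD_{1}\rfloor-z_{0}))$; the cohomology of the short exact sequence
\[
0\longrightarrow\mathcal{O}_{C}(\lfloor sD_{1}\rfloor-z_{0})\longrightarrow\mathcal{O}_{C}(\lfloor sD_{1}\rfloor)\longrightarrow\kappa(z_{0})\longrightarrow 0
\]
thus produces some $f\in H^{0}(C,\mathcal{O}_{C}(\lfloor sD_{1}\rfloor))$ with the extremal pole order $\mathrm{ord}_{z_{0}}(f)=-\lfloor sD_{1}(z_{0})\rfloor$. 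The strict inequality $\lfloor sD_{1}(z_{0})\rfloor>\lfloor sD_{2}(z_{0})\rfloor$ then shows $f\notin H^{0}(C,\mathcal{O}_{C}(\lfloor sD_{2}\rfloor))$, contradicting the hypothesis. If instead $rD_{1}=\mathrm{div}(g)$ is principal, then $g^{-l}\in H^{0}(C,\mathcal{O}_{C}(lrD_{1}))$ for all $l\geq 0$; the hypothesis gives $\lfloor lrD_{2}\rfloor\geq lrD_{1}$ coefficientwise, and dividing by $lr$ and passing to the limit $l\to\infty$ yields $D_{2}\geq D_{1}$ directly.

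The main technical hurdle is the finite-generation step in (i): the floor function prevents one from identifying $B$ with a section ring of any single line bundle, so one must bootstrap from the well-behaved Veronese $B^{(d)}$ via the module decomposition above, with Serre vanishing controlling the ample twists. In (ii) the crux is producing a section of $\mathcal{O}_{C}(\lfloor sD_{1}\rfloor)$ realising the minimal possible pole order at $z_{0}$, which is exactly what the residue-field short exact sequence delivers once the $H^{1}$-vanishing provided by $\deg D_{1}>0$ is available; the principal-multiple alternative is comparatively immediate and uses only the asymptotic behaviour of $\lfloor lrD_{2}\rfloor/(lr)$.
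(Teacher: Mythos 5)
Your proposal is essentially correct, but note that the paper contains no internal proof to compare against: Lemma $3.7$ is stated as well known, with the argument delegated to [De, Section $3$] and [AH, $9.1$]. Your writeup follows the same classical route that those references take, and it is valid over an arbitrary field given the paper's conventions (the curve $C$ is smooth, projective and geometrically irreducible, so Riemann--Roch, the very-ampleness criterion $\deg \geq 2g+1$, Serre duality and $H^{0}(C,\mathcal{O}_{C})=\mathbf{k}$ all apply). In (i) the reduction to the Veronese subalgebra $B^{(d)}$ with $dD$ integral and $\mathcal{O}_{C}(dD)$ very ample, followed by the decomposition of $B$ into the $B^{(d)}$-modules $B_{r}$ bounded by $\bigoplus_{l}H^{0}(C,\mathcal{O}_{C}(ldD+\lceil rD\rceil))t^{ld+r}$ and an appeal to noetherianity, is a complete argument. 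In (ii) the skyscraper sequence plus the vanishing of $H^{1}(C,\mathcal{O}_{C}(\lfloor sD_{1}\rfloor - z_{0}))$ for $s\gg 0$ does produce a section of exact order $-\lfloor sD_{1}(z_{0})\rfloor$ at $z_{0}$, which contradicts the inclusion once $\lfloor sD_{1}(z_{0})\rfloor > \lfloor sD_{2}(z_{0})\rfloor$; and in the principal case a single $l$ already suffices, since $\lfloor lrD_{2}\rfloor \geq lrD_{1}$ and $lrD_{2}\geq\lfloor lrD_{2}\rfloor$ give $D_{2}\geq D_{1}$ without any limit. Two small remarks. First, in the fraction-field step of (i), $\dim_{\mathbf{k}}H^{0}(C,\mathcal{O}_{C}(\lfloor lD\rfloor))\geq 2$ by itself only produces a nonconstant function, hence a priori only a subfield $\mathbf{k}(f)\subset\mathbf{k}(C)$; to get all of $\mathbf{k}(C)$ you should invoke the very ampleness of $\mathcal{O}_{C}(dD)$ you have already established, since the ratios of a basis of $H^{0}(C,\mathcal{O}_{C}(dD))$ generate $\mathbf{k}(C)$ because the associated morphism is a closed embedding; this is a one-line patch using material already in your proof. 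Second, your proof of (ii) only uses the hypothesis on $D_{1}$, which is harmless (it proves a slightly stronger statement than the one quoted, and is consistent with how the lemma is used in the paper, where the inclusion in both directions yields equality of divisors).
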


In the next corollary, we keep the notation of Lemma $3.6$.
Using Demazure's Theorem for normal graded algebras, we show
that each $A_{L}$ admits a D.P.D. presentation with
the same regular projective curve $C$. 
\begin{corollaire}
There exists a unique $\mathbb{Q}$-divisor $D$ on $C$ such that
\begin{eqnarray*}
A_{L} = \bigoplus_{s\geq 0}H^{0}(C,
\mathcal{O}_{C}(\lfloor sD\rfloor))\chi^{sm} 
\end{eqnarray*}
and the following hold.
\begin{enumerate}
 \item[(i)] If $Q(A_{L})_{0} = k$, then
$D = \frac{\rm div\,\it f}{\it d}$ for some $f\in K_{0}^{\star}$
and some $d\in\mathbb{Z}_{>0}$. 
\item[(ii)] If $Q(A_{L})_{0} = K_{0}$, then $\rm deg\,\it D\rm >0$.
\item[(iii)] If $f_{1}\chi^{s_{1}m},\ldots, f_{r}\chi^{s_{r}m}$
are homogeneous generators of the algebra $A_{L}$, then 
\begin{eqnarray*}
D = -\min_{1\leq i\leq r}\frac{\rm div\,\it f_{i}}{s_{i}}\,. 
\end{eqnarray*}
\end{enumerate}
\end{corollaire}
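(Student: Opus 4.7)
My plan is to split according to Lemma 3.6 (ii), construct $D$ in each case, derive assertion (iii) as a uniform minimum formula in terms of any homogeneous generators, and settle uniqueness via Lemma 3.7 (ii). By Lemma 3.6 (i), $A_L$ is a finitely generated normal $\mathbf{k}$-algebra; fix homogeneous generators $f_1\chi^{s_1m},\ldots,f_r\chi^{s_rm}$ with $f_i\in K_0^\star$ and $s_i\in\mathbb{Z}_{>0}$.

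In the case $Q(A_L)_0 = \mathbf{k}$ (yielding (i)), Lemma 3.6 (iii) allows us to take a single generator $\beta\chi^{dm}$, and I set $D = -\operatorname{div}(\beta)/d$, which is of the required form with $f = \beta^{-1}$. For $d\mid s$ the divisor $\lfloor sD\rfloor = \operatorname{div}(\beta^{-s/d})$ is principal, so $H^0(C,\mathcal{O}_C(\lfloor sD\rfloor))=\mathbf{k}\,\beta^{s/d}=A_{sm}$. For $d\nmid s$ I must show $H^0=0$. Write $\operatorname{div}(\beta) = gE$ with $g$ the gcd of coefficients and $E$ a primitive integer divisor; then either $\gcd(d,g)=1$, in which case $\deg\lfloor sD\rfloor<0$ for $d\nmid s$ and $H^0=0$ directly, or $\gcd(d,g)>1$. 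In the latter subcase I plan to invoke the normality of $A$ together with the minimality of $d$ coming from Lemma 3.6 (iii) to force $\operatorname{ord}_{\operatorname{Pic}(C)}(E) = g$, making $\lfloor sD\rfloor$ non-principal for $d\nmid s$. The key step is that any smaller torsion order of $E$ would allow us to extract a $K_0^\star$-root of $\beta$ up to a scalar in $\mathbf{k}^\star$; the resulting weight-$(d/\operatorname{ord}(E))m$ homogeneous element is integral over $A$ via a monic equation with coefficients in $A$ and lies in $\operatorname{Frac}(A)$, hence belongs to $A$ by normality, contradicting the structure of the weight semigroup of $A_L$.

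In the case $Q(A_L)_0 = K_0$ (yielding (ii)), $A_L$ has transcendence degree $2$ over $\mathbf{k}$; I apply Demazure's theorem together with Lemma 3.7 (i) to the $\mathbb{N}$-graded normal finitely generated $\mathbf{k}$-algebra $A_L$, producing a $\mathbb{Q}$-divisor $D$ on $C=\mathscr{R}_{\mathbf{k}}\,K_0$ of positive degree with $A_{sm}=H^0(C,\mathcal{O}_C(\lfloor sD\rfloor))$ for all $s\geq 0$.

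Assertion (iii) follows by setting $D_0 = -\min_{1\leq i\leq r}\operatorname{div}(f_i)/s_i$: any monomial $\prod_i f_i^{l_i}$ of weight $sm$ satisfies $\operatorname{div}(\prod_i f_i^{l_i}) = \sum_i l_i\operatorname{div}(f_i)\geq \sum_i l_i s_i\cdot\min_j\operatorname{div}(f_j)/s_j = -sD_0$, giving $A_{sm}\subset H^0(C,\mathcal{O}_C(\lfloor sD_0\rfloor))$; comparing with the $D$ constructed above in the relevant case, Lemma 3.7 (ii) applied to the inclusions $H^0(C,\mathcal{O}_C(\lfloor sD\rfloor))\subset H^0(C,\mathcal{O}_C(\lfloor sD_0\rfloor))$ in both directions forces $D_0 = D$. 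The same application of Lemma 3.7 (ii), whose hypotheses are met in both cases, gives uniqueness. The main obstacle is the torsion-order computation in case (i) when $\gcd(d,g)>1$, which is where the normality of $A$ does real work.
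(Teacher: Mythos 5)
Your overall route coincides with the paper's (split on $Q(A_{L})_{0}$ via Lemma $3.6$, take $D=\operatorname{div}(\beta^{-1})/d$ when $Q(A_{L})_{0}=\mathbf{k}$, use Demazure's theorem on $\operatorname{Proj}A_{L}$ identified with $C$ when $Q(A_{L})_{0}=K_{0}$, get (iii) from the minimum formula and uniqueness from Lemma $3.7\,$(ii)), and your first subcase ($\gcd(d,g)=1$, degree of $\lfloor sD\rfloor$ strictly negative) is correct. But the step you yourself call the crux fails: normality of $A$ and minimality of $d$ do \emph{not} force $\operatorname{ord}_{\operatorname{Pic}(C)}(E)=g$. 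For a counterexample, take $C$ with a degree-zero class $[E]$ of exact order $2$, say $E=P-O$ on an elliptic curve with $2E=\operatorname{div}\gamma$, put $\beta=\gamma^{3}$ (so $g=6$), and choose a proper $\sigma$-polyhedral divisor with $\mathfrak{D}(m)=-3E$ on a boundary ray (possible since $2\cdot\mathfrak{D}(m)$ is principal); then $A=A[C,\mathfrak{D}]$ satisfies the hypotheses of $3.5\,$(i), $A_{L}=\mathbf{k}[\beta\chi^{2m}]$ with $d=2$, yet $\operatorname{ord}[E]=2\neq 6=g$, while the corollary still holds (what matters is $\gcd(d,\,g/\operatorname{ord}[E])=1$, not $\operatorname{ord}[E]=g$). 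Moreover your contradiction element of ``weight $(d/\operatorname{ord}(E))m$'' is only defined if $\operatorname{ord}(E)\mid d$, and in the example the required root of $\beta$ does not exist in $K_{0}$. The vanishing you need is weaker and follows from the mechanism you gesture at, applied to the right element: if $d\nmid s$ and $0\neq h\in H^{0}(C,\mathcal{O}_{C}(\lfloor sD\rfloor))$, then $\operatorname{div}h\geq -sD=\tfrac{s}{d}\operatorname{div}\beta$, so $h^{d}\beta^{-s}$ is everywhere regular on the projective curve $C$, hence a constant $c\in\mathbf{k}^{\star}$ (as $\mathbf{k}$ is algebraically closed in $K_{0}$); then $(h\chi^{sm})^{d}=c(\beta\chi^{dm})^{s}\in A$, so $h\chi^{sm}\in\operatorname{Frac}A$ is integral over the normal ring $A$, hence lies in $A$, contradicting that the group generated by the weight semigroup of $A_{L}$ is $\mathbb{Z}\,dm$. (The paper is silent on this verification; your instinct to supply it is sound, but the torsion-order reduction is the wrong statement.)

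In (iii) you establish only one inclusion, $A_{sm}\subset H^{0}(C,\mathcal{O}_{C}(\lfloor sD_{0}\rfloor))$, yet you invoke Lemma $3.7\,$(ii) ``in both directions''; the reverse inclusion is exactly the nontrivial content and is never proved, and the hypothesis of $3.7\,$(ii) for $D_{0}$ is not checked. The repair is short: each generator satisfies $f_{i}\chi^{s_{i}m}\in A_{s_{i}m}=H^{0}(C,\mathcal{O}_{C}(\lfloor s_{i}D\rfloor))$, so $\operatorname{div}f_{i}\geq -s_{i}D$, i.e.\ $D\geq -\operatorname{div}(f_{i})/s_{i}$ for every $i$, whence $D\geq D_{0}$ coefficientwise; combined with $D\leq D_{0}$ coming from your inclusion via $3.7\,$(ii) (in case (ii) one has $\dim_{\mathbf{k}}A_{sm}\geq 2$ for some $s$, forcing $\deg D_{0}>0$; in case (i) the generators are powers of $\beta\chi^{dm}$ and $D_{0}=D$ outright), this yields (iii). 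The paper instead quotes the argument of [FZ, $3.9$] (its Corollary $1.7$). Part (ii) and the uniqueness assertion follow the paper's lines and are fine, at the same level of terseness as the original.
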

\begin{proof}
$\rm (i)$ Assume that $Q(A_{L})_{0} = k$. By Lemma $3.6$,
$A_{L} = k[\beta\,\chi^{dm}]$
for some $\beta\in K_{0}^{\star}$ and some $d\in\mathbb{Z}_{>0}$.
Thus, we can take $D = \frac{\rm div\,\beta^{-1}}{\it d}$. 
The uniqueness in this case is easy. This gives assertion $\rm (i)$.

$\rm (ii)$ The field of rational functions of 
the normal variety $\rm Proj\,\it A_{L}$ is
$K_{0} = Q(A_{L})_{0}$. Since $\rm Proj\,\it A_{L}$ is a regular
projective curve over $A_{0} = k$, we may identify its points
with the places of $K_{0}$. Therefore the existence and 
the uniqueness of $D$ follow from Demazure's 
Theorem (see [Dem88, Theorem 3.5]).
Furthermore $Q(A_{L})_{0}\neq k$ implies that
$\rm dim_{\it k}\it A_{\it sm}\rm\geq 2,$ for
some $s\in\mathbb{Z}_{>0}$. Hence by [Sti93, 1.4.12]
we obtain $\rm deg\, \it D\rm >0$.
  
The proof of $\rm (iii)$ follows from $3.7$ and from the argument in [FZ03, $3.9$]. 
\end{proof}
As a consequence of Corollary $3.8$, again we can apply the formula of $2.11$
to obtain the existence of the polyhedral divisor $\mathfrak{D}$ as in the statement of $3.5\,\rm (i)$.
 
\begin{proof}[Proof of $3.5\,\rm (i)$]
Let us adopt the notation introduced in $2.4$ and
$\,2.10$.
Let 
\begin{eqnarray*}
f = (f_{1}\chi^{m_{1}},\ldots, f_{r}\chi^{m_{r}})
\end{eqnarray*}
be a system of homogeneous generators of $A$. 
Consider a half-line 
\begin{eqnarray*}
L = \mathbb{Q}_{\geq 0}\cdot m\subset \sigma^{\vee}
\end{eqnarray*}
with primitive vector $m\in M$. By Corollary $3.8$ 
\begin{eqnarray*}
A_{L} = \bigoplus_{s\geq 0}H^{0}(C,
\mathcal{O}_{C}(\lfloor sD_{m}\rfloor))\chi^{sm} 
\end{eqnarray*}
for a unique $\mathbb{Q}$-divisor $D_{m}$ on $C$. By the proof of
[AH06, Lemma $4.1$] the algebra $A_{L}$
is generated by 
\begin{eqnarray*}
\prod_{i = 1}^{r}(f_{i}\chi^{m_{i}})^{s_{i}},\,\,\,\rm where\,\,\,\it  (s_{\rm 1\it},\ldots s_{r})
\in\mathscr{H}_{L}^{\star}.
\end{eqnarray*}
By Corollary $3.8\,\rm (iii)$ and Lemma $2.11$
we have $\mathfrak{D}[f](m) = D_{m}$ and so 
$A = A[C,\mathfrak{D}[f]]$. 

It remains to show that $\mathfrak{D} = \mathfrak{D}[f]$
is proper; the uniqueness of $\mathfrak{D}$ will then follow by 
Lemma $3.7\,\rm (ii)$. Denote by $S\subset C$ the union of the supports
of the divisors $\rm div\,\it f_{i}$, for $i = 1,\ldots, r$.
Let $v\in \rm deg\,\it \mathfrak{D}$. We can write
\begin{eqnarray*}
v = \sum_{z\in S}[\kappa(P_{z}):k]\cdot v_{z} 
\end{eqnarray*}
for some $v_{z}\in\Delta_{z}[f]$.
Therefore for any $i$ we have 
\begin{eqnarray*}
\langle m_{i}, \sum_{z\in S}[\kappa(P_{z}):
k]\cdot v_{z}\rangle \geq -\sum_{s\in S}[\kappa(P_{z}):k]\cdot
\rm ord_{\it z}\,\it f_{i} =-\rm deg\, div\,\it f_{i}\rm  = 0 
\end{eqnarray*}
and so $\rm deg\,\it \mathfrak{D}\subset \sigma.$
If $\rm deg\,\it \mathfrak{D} = \sigma$, then one concludes that $\rm Frac\,\it A$
is different from $\rm Frac\,\it K_{\rm 0\it}[M]$,
contradicting our assumption.
Hence $\rm deg\,\it \mathfrak{D}\neq \sigma$.
Let $m\in\sigma^{\vee}_{M}$ be such that
$\rm deg\,\it\mathfrak{D}(m)\rm = 0$.
Then $m$ belongs to the boundary of 
$\sigma^{\vee}$.
Consider the half-line $L$ generated by $m$.
Applying Corollary $3.8\,\rm (i)$ to the algebra $A_{L}$,
we deduce that a multiple of
$\mathfrak{D}(m)$ is principal, proving that $\mathfrak{D}$ is proper. 
\end{proof}
\begin{proof}[Proof of $3.5 \,\rm (ii)$]
Let us show that the algebras
$A = A[C,\mathfrak{D}]$ and $K_{0}[M]$ have the same field of fractions. Let 
$L = \mathbb{Q}_{\geq 0}\cdot m$ be a half-line intersecting $\sigma^{\vee}$
with its relative interior and having $m$ for primitive vector. Since
$\rm deg\,\it \mathfrak{D}(m)\,\rm >0$ by Lemma $3.7\,\rm (i)$ 
we have $\rm Frac\,\it A_{L} = K_{\rm 0}(\it\chi^{m})$, yielding our first claim. 
As a consequence, $\sigma^{\vee}$ is the weight cone of the $M$-graded algebra $A$. 
The proof of the normality is similar to that of $2.5\,\rm (i)$.

Let us show further that $A$ is finitely generated. First we may consider a subdivision 
of $\sigma^{\vee}$ by regular strongly convex polyhedral
cones $\omega_{1},\ldots, \omega_{s}$ such that for any $i$ we have 
$\omega_{i}\cap \rm relint\,\it\sigma^{\vee} \neq \emptyset$, $\omega_{i}$ is full dimensional,
and $\mathfrak{D}$ is linear on $\omega_{i}$. Fix $1\leq i\leq s$ and a positive integer $\mu$. 
Let $(e_{1},\ldots, e_{n})$
be a basis of $M$ generating the cone $\omega_{i}$ and such that $e_{1}\in\rm relint\,\it\sigma^{\vee}$.
By properness of $\mathfrak{D}$, there exists $d\in\mathbb{Z}_{>0}$ such that every $\mathfrak{D}(de_{j})$ is a 
globally generated integral divisor. Letting
 \begin{eqnarray*}
A_{\omega_{i},\mu} = \bigoplus_{(a_{1},\ldots, a_{n})\in\mathbb{Z}^{n}}H^{0}\left(C,
\mathcal{O}\left(\left\lfloor \sum_{i = 1}^{n}a_{i}\mu e_{i}\right\rfloor\right)\right)\chi^{\sum_{i}a_{i}\mu e_{i}} 
\end{eqnarray*}
we consider homogeneous elements $f_{1}\chi^{m_{1}},\ldots, f_{r}\chi^{m_{r}}\in A_{\omega_{i},d}$
obtained by taking generators of the space of global sections of every $\mathcal{O}(\mathfrak{D}(de_{j}))$
and homogeneous generators of the graded algebra
\begin{eqnarray*}
B = \bigoplus_{l\geq 0}H^{0}(C,\mathcal{O}_{C}(\mathfrak{D}(dle_{1})))\chi^{lde_{1}}, 
\end{eqnarray*}
see Lemma $3.7\,\rm (i)$. Using Theorem $3.5\,\rm (iii)$ the normalization of
$k[f_{1}\chi^{m_{1}},\ldots, f_{r}\chi^{m_{r}}]$ is $A_{\omega_{i},d}$.
So by Theorem $2$ in [Bou72, V$3.2$]
the algebra $A_{\omega_{i}} = A_{\omega_{i},1}$ is finitely generated. One concludes
by considering the surjection 
$A_{\omega_{1}}\otimes\ldots\otimes A_{\omega_{s}}\rightarrow A$.
\end{proof}
In the next assertion, we study how the algebra associated to a polyhedral
divisor over a regular projective curve changes, when we extend the scalars passing 
to the algebraic closure of the ground field $k$.
The first claims are classical for the theory of
algebraic function fields, and so the proofs are omitted. 
\begin{theorem}
Assume that $k$ is a perfect field, and let $\bar{k}$ be an algebraic 
closure of $k$. Let $C = \mathscr{R}_{k}\, K_{0}$ be the Riemann surface
of an algebraic function field $K_{0}/k$.
The absolut Galois group of $\mathfrak{G}_{\bar{k}/k}$
acts on the closed points of curve $$C_{\bar{k}} = C\times_{{\rm Spec}\, k}{\rm Spec}\,\bar{k}$$
which can be identified with the set of the $\bar{k}$-rational points of $C(\bar{k})$.
The orbit space $C(\bar{k})/\mathfrak{G}_{\bar{k}/k}$ can be identified
with $C$. We denote by $\pi : C(\bar{k})\rightarrow C$ the quotient map.
In terms of the places, $\pi$ is defined as the map 
$$\mathscr{R}_{\bar{k}}\, K_{0}\otimes_{k}\bar{k}\rightarrow \mathscr{R}_{k}\, K_{0},\,\,\, P\mapsto P\cap K_{0}.$$ 
If $\mathfrak{D} = \sum_{z\in C}\Delta_{z}\cdot z$ 
is a proper $\sigma$-polyhedral divisor over $C$, then
\begin{eqnarray*}
A[C,\mathfrak{D}]\otimes_{k}\bar{k} = A\left[C(\bar{k}),
\mathfrak{D}_{\bar{k}}\right], 
\end{eqnarray*}
where $\mathfrak{D}_{\bar{k}}$ is the proper $\sigma$-polyhedral divisor over
$C(\bar{k})$ defined by
\begin{eqnarray*}
\mathfrak{D}_{\bar{k}} = \sum_{z\in C}\Delta_{z}\cdot \pi^{\star}(z)\,\,\,\rm with\,\,\,\it 
\pi^{\star}(z) = \sum_{z'\in \pi^{-\rm 1\it}(z)}z'. 
\end{eqnarray*}
\end{theorem}
\begin{proof}
Given a Weil $\mathbb{Q}$-divisor $D$ over $C$, by [Sti93, Theorem $3.6.3$] we obtain
\begin{eqnarray*}
H^{0}(C,\mathcal{O}_{C}(\lfloor D\rfloor))\otimes_{k}\bar{k} = 
H^{0}(C(\bar{k}),\mathcal{O}_{C(\bar{k})}(\lfloor \pi^{\star}\,D\rfloor)).
\end{eqnarray*}
The proof reduces to a computation of $A[C,\mathfrak{D}]\otimes_{k}\bar{k}$. 
The properness of $\mathfrak{D}_{\bar{k}}$ is given for instance by $3.5\, \rm (i)$.
\end{proof}
\begin{remarque}
It is well known that every finitely generated extension of 
a perfect field is separable. In the non-perfect case,   
we may consider the inseparable algebraic function field
of one variable
\begin{eqnarray*}
K_{0} = \rm Frac\,\it \frac{k[X,Y]}{(tX^{\rm 2\it}+s+Y^{\rm 2\it})}\,,   
\end{eqnarray*}
where  $k = \mathbb{F}_{2}(s,t)$ is the rational function field
in two variables. Consequently,
for any proper polyhedral divisor $\mathfrak{D}$ over $C = \mathscr{R}_{k}\, K_{0}$,
the ring $A[C,\mathfrak{D}]\otimes_{k}\bar{k}$ contains a 
nonzero nilpotent element.    
\end{remarque}

\section{Split affine $\mathbb{T}$-varieties of complexity one}
As an application of the results in the previous sections, we can give now a combinatorial description
of affine $\mathbb{T}$-varieties of complexity one over any field $k$. 

\begin{rappel}
Let $\mathbb{T}$ be a split algebraic torus over $k$.
Denote by $M$ and $N$ its dual lattices of characters and of one-parameter subgroups, respectively. 
Let $X = \rm Spec\,\it A$ be an affine variety over $k$. 
Assume that $\mathbb{T}$ acts on $X$, respectively. Then the associated morphism 
$A\rightarrow A\otimes_{k}k[\mathbb{T}]$ endows $A$ with 
an $M$-grading. Conversely, an $M$-grading on the algebra 
$A$ yields naturally a $\mathbb{T}$-action on $X$. 
We say that $X$ is a \em $\mathbb{T}$-variety \rm if $X$ is normal and if
the $\mathbb{T}$-action on $X$ is effective\footnote{Seeing $\mathbb{T}$ as a representable
group functor, this means that the kernel of the natural
transformation of group functors $\mathbb{T}\rightarrow\rm Aut\,\it X$ is trivial.}. 
This latter is equivalent to the condititon that $A$ is normal and
the set of its weights generates the lattice $M$.
\end{rappel}

\begin{definition}
\begin{enumerate}
\item[(1)] Given an effective $\mathbb{T}$-action on the variety $X$, a \emph{multiplicative system} of $k(X)$ 
is a sequence $(\chi^{m})_{m\in M}$,
where each $\chi^{m}$ is a homogeneous element of $k(X)$ of degree $m$ such
that $\chi^{m}\cdot \chi^{m'} = \chi^{m + m'}$, for all $m,m'\in M$, and $\chi^{0} = 1$.
\item[(2)]
Let $C$ be a regular curve over $k$, and let $\sigma\subset N_{\mathbb{Q}}$
be a strongly convex polyhedral cone. A $\sigma$-polyhedral divisor $\mathfrak{D} = \sum_{z\in C}\Delta_{z}\cdot z$ 
is called \em proper \rm if it
satisfies one of the following conditions.
\begin{enumerate}
\item[\rm (i)]
$C$ is affine. In particular, $\mathfrak{D}$ is a polyhedral divisor over the Dedekind ring
$A_{0} = k[C]$. 
\item[\rm (ii)]
$C$ is projective, and $\mathfrak{D}$ is a proper polyhedral divisor in the sense of $3.4$. 
\end{enumerate}
We denote by $A[C,\mathfrak{D}]$ the associated $M$-graded algebra.
\end{enumerate}
\end{definition}
Combining $2.5$ and $3.5$ one can describe an affine 
$\mathbb{T}$-variety of complexity one over an arbitrary field via a proper polyhedral divisor.
\begin{theorem}
\
\begin{enumerate}
\item[\rm (i)] To any affine $\mathbb{T}$-variety $X = \rm Spec\,\it A$ of complexity one over $k$,
one can associate a pair $(C_{X}, \mathfrak{D}_{X,\gamma})$, where
\begin{enumerate}
\item[(a)] $C_{X}$ is the abstract regular curve over $k$ defined by the conditions 
$k[C_{X}] = k[X]^{\mathbb{T}}$ and $k(C_{X}) = k(X)^{\mathbb{T}}$.
\item[(b)] $\mathfrak{D}_{X,\gamma}$ is a proper $\sigma_{X}$-polyhedral divisor 
over $C_{X}$, which depends only on $X$ and on a multiplicative system $\gamma = (\chi^{m})_{m\in M}$ of $k(X)$.
For each primitive vector $m\in M$ in the relative interior of $\sigma_{X}^{\vee}$, we
denote by $\pi_{m}: V_{m}\rightarrow C_{X}$ the quotient map for the $\mathbb{G}_{m}$-action
on $V_{m}$, where $$V_{m} = {\rm Spec}\, A_{(m)}\setminus \mathbb{V}(\bigoplus_{r\in\mathbb{Z}_{>0}}A_{rm})$$
and $A_{(m)} = \bigoplus_{r\geq 0}A_{rm}$. The polyhedral divisor $\mathfrak{D}_{X,\gamma}$ is given by the relation
$$\mathfrak{D}_{X,\gamma}(m) = \sum_{Z\subset V_{m}}\frac{p_{Z}}{q_{Z}}\,(\pi_{m})_{\star}(\bar{Z}),$$
where ${\rm div}_{V_{m}}(\chi^{m}) = \sum_{Z\subset V_{m}}p_{Z}\cdot Z$, the prime divisor $\bar{Z}$ is the closure of $Z$
in ${\rm Spec}\, A_{(m)}$, and\footnote{Note that every closed subscheme $\bar{Z}\subset {\rm Spec}\, A_{(m)}$ such
that $Z$ is a prime divisor contained in the support of ${\rm div}(\chi^{m})$ is $\mathbb{G}_{m}$-stable.
Hence $I(\bar{Z})$ is a graded ideal of $A_{(m)}$ and the number $q_{Z}$ is well defined.}  
$$\left\{
\begin{array}{l}
  q_{Z} =  {\rm g.c.d.}\{r\in\mathbb{Z}_{>0}\,|\, (A_{(m)}/I(\bar{Z}))_{rm}\neq \{0\}\}\,\,\,{\rm if}\,\,\,p_{Z}\neq 0,\,\,\,{\rm and}\\
  q_{Z} = 1\,\,\,{\rm otherwise.}
\end{array}
\right.$$ 
\end{enumerate}
Moreover, we have a natural identification $A = A[C_{X},\mathfrak{D}_{X,\gamma}]$ of $M$-graded algebras
with the property that every homogeneous element $f\in A$ of degree $m$ is equal to $f_{m}\chi^{m}$ 
for a unique global section $f_{m}$ of the sheaf $\mathcal{O}_{C_{X}}(\lfloor \mathfrak{D}_{X,\gamma}(m)\rfloor)$.

\item[\rm (ii)] Conversely, if $\mathfrak{D}$ is a proper $\sigma$-polyhedral 
divisor on a regular curve $C$ over $k$, then $X = {\rm Spec}\,A[C,\mathfrak{D}]$ defines an 
affine $\mathbb{T}$-variety of complexity one over $k$. 
\end{enumerate}
\end{theorem}
\begin{proof}
$\rm (i)$ Let $\sigma_{X}\subset N_{\mathbb{Q}}$ be the dual of the weight cone of $A$.
Choosing a multiplicative system $\gamma  = (\chi^{m})_{m\in M}$, we have
an embedding
\begin{eqnarray*}
A\subset \bigoplus_{m\in M}k(X)^{\mathbb{T}}\,\chi^{m} = k(X)^{\mathbb{T}}[M].
\end{eqnarray*}
Furthermore, $A$ and $k(X)^{\mathbb{T}}[M]$ have
the same field of fractions. The graded piece $A_{0}$ is the algebra of $\mathbb{T}$-invariants.
Denote by $K_{0}$ the field of fractions of $A_{0}$.
Assume that $A_{0}\neq k$. Then we have $K_{0} = k(X)^{\mathbb{T}}$. 
Indeed, by assumption every algebraic element of $K_{0}$
over $k$ belongs to $k$. Therefore the transcendence degree
of $K_{0}/k$ is equal to $1$ so that $k(X)^{\mathbb{T}}/K_{0}$ is algebraic. 
Using the normality of $A_{0}$ one concludes that $K_{0} = k(X)^{\mathbb{T}}$.
Remark further that the ring $A_{0}$ is a Dedekind domain. 
By Theorem $2.5\,\rm (ii)$, we obtain that $A = A[C_{X},\mathfrak{D}_{X, \gamma}]$ for
a unique $\sigma_{X}$-polyhedral divisor $\mathfrak{D}_{X,\gamma}$ over $A_{0}$.
If $A_{0} = k$, then one concludes by Theorem $3.5\,\rm (i)$. The characterization of
$\mathfrak{D}_{X,\gamma}(m)$ in term of the principal divisor ${\rm div}_{V_{m}}(\chi^{m})$,
for every primitive vector $m\in\sigma_{X}^{\vee}\cap M$, is a consequence of [Dem88, 3.5].  
Assertion $\rm (ii)$ follows immediately from 
$2.5\,\rm (i)$ and $3.5\, \rm (ii)$.
\end{proof}
\begin{rappel}
By a \em principal $\sigma$-polyhedral divisor \em $\mathfrak{F}$ over $C$
we mean a pair $(\varphi, \mathfrak{D})$ with a semigroup morphism  
$\varphi : \sigma^{\vee}_{M}\rightarrow k(C)^{\star}$ and a $\sigma$-polyhedral 
divisor $\mathfrak{D}$ over $C$ such that for any $m\in\sigma^{\vee}_{M}$ we have
\begin{eqnarray*}
\mathfrak{D}(m) = \rm div_{\it C}\,\it \mathfrak{F}(m). 
\end{eqnarray*}
Starting with $\mathfrak{F}$ and choosing a finite generating set of $\sigma^{\vee}_{M}$ 
one can easily construct $\mathfrak{D}$ satisfying the 
equalities as before. Usually we denote $\mathfrak{F}$
and $\mathfrak{D}$ by the same symbol. 
\end{rappel}
The following result provides a description of equivariant isomorphisms between
two affine $\mathbb{T}$-varieties of complexity one over an arbitrary field.
See [AH06, Section $8,9$] for higher complexity when the ground field is algebraically
closed of characteristic zero. 
\begin{proposition}
Let $X_{1}, X_{2}$ be two affine $\mathbb{T}$-varieties of complexity one over $k$
described by the respective pairs $(C_{X_{1}},\mathfrak{D}_{X_{1},\gamma_{1}})$ and
$(C_{X_{2}},\mathfrak{D}_{X_{2},\gamma_{2}})$ {\rm (}see {\rm 4.3}{\rm )}. Then 
the $\mathbb{T}$-varieties $X_{1}$ and $X_{2}$ are $\mathbb{T}$-isomorphic if and only if
there exist an isomorphism $\phi: C_{X_{1}}\rightarrow C_{X_{2}}$ of algebraic curves 
over $k$, an automorphism\footnote{For any morphism $F:N\rightarrow N,$
we denote by $F_{\mathbb{Q}}:N_{\mathbb{Q}}\rightarrow N_{\mathbb{Q}}$ the induced $\mathbb{Q}$-linear map of $F$.}
 of lattices $F:N\rightarrow N$ satisfying $F_{\mathbb{Q}}(\sigma_{X_{1}})
=\sigma_{X_{2}}$, and a principal $\sigma_{X_{2}}$-polyhedral divisor $\mathfrak{F}$ over $C_{X_{1}}$
such that for all $m\in \sigma_{X_{2}}^{\vee}\cap M$,
$$\phi^{\star}(\mathfrak{D}_{X_{2},\gamma_{2}})(m) = F_{\star}(\mathfrak{D}_{X_{1},\gamma_{1}})(m)
+\phi^{\star}(\mathfrak{F})(m),$$
where 
$$\mathfrak{D}_{X_{1},\gamma_{1}} = \sum_{z\in C_{X_{1}}}\Delta_{z}^{1}\cdot z,\,\,\,
\mathfrak{D}_{X_{2},\gamma_{2}} = \sum_{z\in C_{X_{2}}}\Delta_{z}^{2}\cdot z,\,\,\, and$$
$$F_{\star}(\mathfrak{D}_{X_{1},\gamma_{1}}) = \sum_{z\in C_{X_{1}}}(F_{\mathbb{Q}}(\Delta_{z}^{1}) + \sigma_{X_{2}})\cdot z,
\,\,\,\phi^{\star}(\mathfrak{D}_{X_{2},\gamma_{2}}) = \sum_{z\in C_{X_{2}}}\Delta_{z}^{2}\cdot \phi^{-1}(z).$$
\end{proposition}
\begin{proof}
Let $\psi:k[X_{2}]\rightarrow k[X_{1}]$ be an isomorphism of $M$-graded algebras over $k$.
Then $\psi$ induces two natural maps 
\begin{align*}
\phi^{\star}:k(C_{X_{2}}) = k(X_{2})^{\mathbb{T}}\rightarrow k(C_{X_{1}}) = k(X_{1})^{\mathbb{T}},
 \\
\phi^{\star}:k[C_{X_{2}}] = k[X_{2}]^{\mathbb{T}}\rightarrow k[C_{X_{1}}] = k[X_{1}]^{\mathbb{T}},
\end{align*} 
which yield a morphism of algebraic curves $\phi:C_{X_{1}}\rightarrow C_{X_{2}}$. Moreover, we define an automorphism
$F:N\rightarrow N$ such that $F_{\mathbb{Q}}(\sigma_{X_{1}}) = \sigma_{X_{2}}$ and a principal $\sigma_{X_{2}}$-polyhedral
divisor $\mathfrak{F}$ as follows. If $\gamma_{1} = (\chi^{m})_{m\in M}$ and $\gamma_{2} = (\xi^{m})_{m\in M}$, then for every 
homogeneous element $f\xi^{m}\in k[X_{2}]$ there exists a unique vector $F^{\vee}(m)\in\sigma_{X_{1}}^{\vee}\cap M$
such that
\begin{eqnarray}
\psi(f\xi^{m}) = \phi^{\star}(f)\cdot \phi^{\star}(f_{m})\chi^{F^{\vee}(m)}.
\end{eqnarray}
The map $F$ is the dual morphism of $F^{\vee}$. Extending $\psi$ to an isomorphism $\bar{\psi}:k(X_{2})\rightarrow k(X_{1}),$
we can define
the semigroup morphism 
$$\sigma_{X_{2}}^{\vee}\cap M\rightarrow k(C_{X_{2}})^{\star},\,\,\, m\mapsto f_{m}^{-1},$$
which gives the principal polyhedral divisor $\mathfrak{F}$. Now we conclude by the
equivalences
\begin{align*}
{\rm div}(\phi^{\star}(f))  + \phi^{\star}(\mathfrak{D}_{X_{2},\gamma_{2}})(m)\geq 0 
\\
\Leftrightarrow {\rm div}(f) + \mathfrak{D}_{X_{2},\gamma_{2}}(m)\geq 0
\\
\Leftrightarrow {\rm div}(\phi^{\star}(f)\cdot \phi^{\star}(f_{m})) + \mathfrak{D}_{X_{1},\gamma_{1}}(F^{\vee}(m))\geq 0
\\ 
\Leftrightarrow {\rm div}(\phi^{\star}(f)) + F_{\star}(\mathfrak{D}_{X_{1},\gamma_{1}})(m) + \phi^{\star}(\mathfrak{F})(m)\geq 0, 
\end{align*}
which imply the equality $\phi^{\star}(\mathfrak{D}_{X_{2},\gamma_{2}})(m) = F_{\star}(\mathfrak{D}_{X_{1},\gamma_{1}})(m)
+\phi^{\star}(\mathfrak{F})(m)$, for every $m\in\sigma_{X_{2}}^{\vee}\cap M$.
Conversely, starting with the triple $(F,\phi, \mathfrak{F})$, we define a morphism $\psi: k[X_{2}]\rightarrow
k[X_{1}]$ by the equality (1), where $\phi^{\star}:k(C_{X_{2}})\rightarrow k(C_{X_{1}})$ is the comorphism
of $\phi$, $F$ is the dual of $F^{\vee}$, and $\mathfrak{F}$ is given by the morphism $m\mapsto f^{-1}_{m}$. 
A straightforward verification shows that $\psi$ is well defined and gives an isomorphism of $M$-graded algebras.
\end{proof}

\section{Non-split case via Galois descent}
In view of the result in the previous section, we provide a combinatorial description 
of affine normal varieties endowed with a (non-necessary split) torus action of complexity one
(see $5.4$ for a precise definition). This can be compared with well-known descriptions 
for toric and spherical varieties, see [Bry79, CTHS05, Vo82, ELST12, Hur11]. 
\begin{rappel}
For a field extension $F/k$ and a scheme $X$ over $k$
we let 
\begin{eqnarray*}
X_{F} = X\times_{\rm Spec\,\it k}\rm Spec\,\it F.
\end{eqnarray*}
This is a scheme over $F$. 
An algebraic \em torus \rm of dimension $n$
is an algebraic group $\mathbf{G}$ over $k$ such that there exists
a finite Galois extension $E/k$ yielding an isomorphism of
algebraic groups $\mathbf{G}_{E}\simeq \mathbb{G}_{m,E}^{n}\,(\star)$, where
$\mathbb{G}_{m}$ is the multiplicative group scheme over $k$.
We say that the torus $\mathbf{G}$ \em splits in the extension $E/k$ \rm 
if we have an isomorphism similar to $(\star)$. For more details concerning the
theory of non-split reductive group the reader may consult [BT65, Spr98].
\end{rappel}
Let $\mathbf{G}$ be a torus over $k$ that splits in a finite Galois
extension $E/k$. Denote by $\mathfrak{G}_{E/k}$ the Galois
group of $E/k$. Consider also 
the dual lattices $M$ and $N$, respectively, of characters and of one-parameter 
subgroups of the split torus $\mathbf{G}_{E}$. In the sequel most 
of our varieties will be defined over the field $E$.
We start by recalling the following classical notion.
\begin{definition}
\
\begin{enumerate}
\item[\rm (i)]
A $\mathfrak{G}_{E/k}$-action
on a variety $V$ over $E$ is called \em semi-linear \rm if $\mathfrak{G}_{E/k}$ acts by 
scheme automorphisms, and if for any 
$g\in \mathfrak{G}_{E/k}$ the diagram
\begin{eqnarray*}
  \xymatrix{
    V \ar[r]^g \ar[d] & V \ar[d] \\ 
    \rm Spec\,\it E \ar[r]_g & \rm Spec\,\it E}
\end{eqnarray*}
is commutative.
\item[\rm (ii)] Let $B$ be an algebra over $E$. A \em semi-linear \rm $\mathfrak{G}_{E/k}$-action on $B$ 
is an action by ring automorphisms such that for all $a\in B$, $\lambda\in E$,
and $g\in \mathfrak{G}_{E/k}$
\begin{eqnarray*}
g\cdot (\lambda a) = g(\lambda)g\cdot a.
\end{eqnarray*} 
\end{enumerate}
If $V$ is affine, then defining a semi-linear $\mathfrak{G}_{E/k}$-action on $V$ is
equivalent to defining a semi-linear $\mathfrak{G}_{E/k}$-action on the algebra
$E[V]$.
\end{definition}
Next, we recall a well-known description of algebraic tori related to finite groups actions on lattices.
\begin{rappel}
The Galois group $\mathfrak{G}_{E/k}$ acts naturally on the torus
\begin{eqnarray*}
\mathbf{G}_{E} = \mathbf{G}\times_{\rm Spec\,\it k}\rm Spec \,\it E
\end{eqnarray*}
by action on the second factor. The corresponding action on $E[M]$
is determinated by a linear 
$\mathfrak{G}_{E/k}$-action on $M$ (see e.g. [ELST12, Proposition $2.5$], [Vos82, Section $1$])
permuting the Laurent monomials.

Conversely, given a linear 
$\mathfrak{G}_{E/k}$-action on $M$ we have a semi-linear action on 
$E[M]$ defined by
\begin{eqnarray*}
g\cdot (\lambda\chi^{m}) = g(\lambda)\chi^{g\cdot m},
\end{eqnarray*}
where $g\in \mathfrak{G}_{E/k}$, $\lambda\in E$ and $m\in M$, respects 
the Hopf algebra structure. As a consequence of the Speiser's Lemma, we obtain
a torus $\mathbf{G}$ over $k$ that splits in $E/k$. In addition, the semi-linear
action that we have defined on $\mathbf{G}_{E} =  \mathbf{G}\times_{\rm Spec\,\it k}\rm Spec \,\it E$ is exactly
the natural semi-linear action on the second factor. 
\end{rappel} 
Let us further introduce our category of $\mathbf{G}$-varieties.
\begin{definition}  
A \em $\mathbf{G}$-variety of complexity \rm $d$ (splitting
in $E/k$) is a normal variety over $k$ with a $\mathbf{G}$-action
and such that $X_{E}$ is a $\mathbf{G}_{E}$-variety of complexity $d$ in the sense of Section $4$.
A \em $\mathbf{G}$-morphism \rm between $\mathbf{G}$-varieties $X,Y$ over $k$ is a morphism $f:X\rightarrow Y$
of varieties over $k$ such that
\begin{eqnarray*}
	\xymatrix{
    \mathbf{G}\times X \ar[r]^{\rm id\it\times f} \ar[d] & \mathbf{G}\times Y \ar[d] \\ 
    X \ar[r]_f & Y}
\end{eqnarray*}
 is commutative.
\end{definition}
An important class of semi-linear actions is provided by the actions respecting a split torus action.
The $\mathfrak{G}_{E/k}$-action on $\mathbf{G}_{E}$ is defined as in $5.3$.
\begin{definition}
\
\begin{enumerate}
\item[\rm (i)] Let $B$ be an $M$-graded algebra over $E$. A semi-linear $\mathfrak{G}_{E/k}$-action
on $B$ is called \em homogeneous \rm if it sends homogeneous elements into homogeneous elements. 
\item[\rm (ii)] A semi-linear $\mathfrak{G}_{E/k}$-action on a $\mathbf{G}_{E}$-variety $V$ \em respects 
the $\mathbf{G}_{E}$-action \rm 
if the following diagram
\begin{eqnarray*}
		\xymatrix{
    \mathbf{G}_{E}\times V \ar[r]^{g\times g} \ar[d] & \mathbf{G}_{E}\times V \ar[d] \\ 
    V \ar[r]_g & V}
\end{eqnarray*}
commutes, where $g$ runs $\mathfrak{G}_{E/k}$. 
\end{enumerate}
Assuming that $V$ is affine, a semi-linear $\mathfrak{G}_{E/k}$-action on the variety $V$ respecting the
$\mathbf{G}_{E}$-action corresponds to a homogeneous semi-linear $\mathfrak{G}_{E/k}$-action 
on the algebra $E[V]$.
\end{definition}
The following result is classically stated for the category of quasi-projective varieties
(see the proof of [Hur12, $1.10$]).
In the setting of affine $\mathbf{G}$-varieties we include a short argument.
\begin{lemme}
Let $V$ be an affine $\mathbf{G}_{E}$-variety of complexity $d$ over $E$ 
with a semi-linear $\mathfrak{G}_{E/k}$-action. 
Then the quotient $X = V/\mathfrak{G}_{E/k}$ is an affine $\mathbf{G}$-
variety of complexity $d$. We have a natural isomorphism of $\mathbf{G}_{E}$-varieties
$X_{E}\simeq V$ respecting the $\mathfrak{G}_{E/k}$-actions.
\end{lemme}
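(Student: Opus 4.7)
The plan is to pass to coordinate rings and run a standard Galois descent argument. Let $B = E[V]$; the data of a semi-linear $\mathfrak{S}_{E/\mathbf{k}}$-action on $V$ respecting the $\mathbf{G}(E)$-action corresponds, by $5.6$, to a homogeneous semi-linear action on $B$, i.e.\ one satisfying $g\cdot B_{m}\subseteq B_{g\cdot m}$ for all $g\in\mathfrak{S}_{E/\mathbf{k}}$ and $m\in M$. Set $A = B^{\mathfrak{S}_{E/\mathbf{k}}}$ and $X = \mathrm{Spec}\,A$. My goal is to equip $X$ with a $\mathbf{G}$-action of complexity $d$ and identify $X(E)$ with $V$.

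First I would invoke the classical Speiser's Lemma for the finite Galois extension $E/\mathbf{k}$: the functor $W\mapsto W\otimes_{\mathbf{k}}E$ is an equivalence between $\mathbf{k}$-vector spaces and $E$-vector spaces with semi-linear $\mathfrak{S}_{E/\mathbf{k}}$-action, with inverse $U\mapsto U^{\mathfrak{S}_{E/\mathbf{k}}}$. Applied to $B$ this gives a canonical isomorphism of $E$-algebras $A\otimes_{\mathbf{k}}E\xrightarrow{\sim} B$, which is exactly the desired isomorphism $X(E)\simeq V$ at the scheme level; equivariance for both $\mathbf{G}(E)$ and $\mathfrak{S}_{E/\mathbf{k}}$ is built into the descent. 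Because $\mathbf{G} = \mathrm{Spec}\,E[M]^{\mathfrak{S}_{E/\mathbf{k}}}$ by $5.3$, applying the invariants functor to the coaction $B\to B\otimes_{E}E[M]$, which is $\mathfrak{S}_{E/\mathbf{k}}$-equivariant precisely because the action is homogeneous in the sense above, yields a coaction $A\to A\otimes_{\mathbf{k}}\mathbf{k}[\mathbf{G}]$, defining the $\mathbf{G}$-action on $X$.

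Next I would check that $X$ is an affine $\mathbf{G}$-variety. That $A$ is a domain is clear since $A\subset B$ and $B$ is a domain. Normality of $A$ follows from normality of $B$ and the fact that $B$ is integral over $A$ (the extension being finite Galois): the intersection of a normal ring with a subfield is normal. Finite generation over $\mathbf{k}$ is obtained via the Artin--Tate lemma: $B = A\otimes_{\mathbf{k}}E$ is finitely generated over $\mathbf{k}$ and integral, hence module-finite, over $A$, and $B$ is a noetherian $A$-module, so $A$ is finitely generated over $\mathbf{k}$. Separatedness is immediate for an affine scheme, and that $\mathbf{k}$ is algebraically closed in $\mathrm{Frac}(A)$ follows from $\mathrm{Frac}(A)\otimes_{\mathbf{k}}E\hookrightarrow \mathrm{Frac}(B)$ together with geometric irreducibility of $V$. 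Effectiveness of the $\mathbf{G}$-action on $X$ follows from that of $\mathbf{G}(E)$ on $V$, since the two actions are identified after base change to $E$.

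Finally, the complexity is preserved: the invariant subfield $\mathrm{Frac}(A)^{\mathbf{G}}$ satisfies $\mathrm{Frac}(A)^{\mathbf{G}}\otimes_{\mathbf{k}}E = \mathrm{Frac}(B)^{\mathbf{G}(E)}$, so its transcendence degree over $\mathbf{k}$ equals that of $\mathrm{Frac}(B)^{\mathbf{G}(E)}$ over $E$, which is $d$ by hypothesis. I expect the main obstacle to be the rigorous verification that the descended coaction $A\to A\otimes_{\mathbf{k}}\mathbf{k}[\mathbf{G}]$ indeed defines a $\mathbf{G}$-action compatible, after $\otimes_{\mathbf{k}}E$, with the original $\mathbf{G}(E)$-action on $V$; this boils down to tracking the semi-linear action through the coproduct on $E[M]$ and is where the compatibility in $5.3$ and $5.6$ is essential.
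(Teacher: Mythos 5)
Your proposal is correct and follows essentially the same route as the paper's proof: pass to $B=E[V]$ with its homogeneous semi-linear action, take $A=B^{\mathfrak{S}_{E/\mathbf{k}}}$, prove normality by the integrality argument $f\in B\cap\mathrm{Frac}(A)=A$, and obtain $X(E)\simeq V$ and the descended $\mathbf{G}$-action via Speiser's Lemma. The only difference is one of detail: the paper simply asserts finite generation of the invariant ring as known and leaves the descent of the coaction implicit in Definition $5.5$, whereas you spell these out (Artin--Tate, descent of $B\to B\otimes_{E}E[M]$), which is a harmless elaboration of the same argument.
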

\begin{proof}
It is known that $R = B^{\mathfrak{G}_{E/k}}$ is finitely generated.
Let us show that $R$ is normal. Letting $L$ be the field of fractions of $R$ and considering an element $f\in L$ 
integral over $R$, by the normality of $B$, we have $f\in B\cap L = R$. This proves that $R$ is normal.
Using the above definition, the variety $X$ is endowed with a $\mathbf{G}$-action. The rest of the proof
follows from Speiser's Lemma. 
\end{proof}
Fixing an affine $\mathbf{G}$-variety $X$ of complexity $d$ over $E$, an \em $E/k$-form \rm of $X$
is an affine $\mathbf{G}$-variety $Y$ over $k$ such that we have a $\mathbf{G}_{E}$-isomorphism
$X_{E}\simeq Y_{E}$. Our aim is to give a combinatorial description of $E/k$-forms of $X$.
Let us recall first in this context some notion of non-abelian Galois cohomology (see e.g. [Ser94, III Section $1$]
for the category of varieties). 
\begin{rappel}
Let $Y, Y'$ be $E/k$-forms of the fixed affine $\mathbf{G}$-variety $X$. The Galois group 
$\mathfrak{G}_{E/k}$ acts on the set of $\mathbf{G}_{E}$-isomorphisms
between $Y_{E}$ and $Y'_{E}$. Consequently, it acts also by group automorphisms on the
group of $\mathbf{G}_{E}$-automorphisms $\rm Aut_{\it\mathbf{G}_{E}}(\it X_{E}\rm )$ of $X_{E}$.
More precisely, recall that for any $g\in\mathfrak{G}_{E/k}$ and 
any $\mathbf{G}_{E}$-isomorphism $\varphi: Y_{E}\rightarrow Y'_{E}$
one defines $g(\varphi)$ by the following commutative diagram
\begin{eqnarray*}
   \xymatrix{
    Y_{E} \ar[r]^{g(\varphi)} \ar[d]_{g} & Y'_{E} \ar[d]^{g} \\ 
    Y_{E} \ar[r]_{\varphi} & Y'_{E}}.
\end{eqnarray*}
Note that this $\mathfrak{G}_{E/k}$-action depends on the choice of
the $E/k$-forms $Y,Y'$. Now given a $\mathbf{G}_{E}$-isomorphism 
$\psi: X_{E}\rightarrow Y_{E}$ the map 
\begin{eqnarray*}
a:\mathfrak{G}_{E/k}\rightarrow \rm Aut_{\it\mathbf{G}_{E}}(\it X_{E}\rm ),\it
\,\,\, g\mapsto a_{g} = \psi^{\rm -1\it}\circ g(\psi)
\end{eqnarray*}
is a \em $1$-cocycle. \rm This means that for all $g,g'\in \mathfrak{G}_{E/k}$ we have
\begin{eqnarray*}
a_{g}\circ g(a_{g'}) = \psi^{-1}\circ g(\psi)\circ g\left(\psi^{-1}\circ g'(\psi)\right) = a_{gg'}.
\end{eqnarray*}
Let $\phi :Y\rightarrow Y'$ be a $\mathbf{G}$-isomorphism and consider a $\mathbf{G}_{E}$-isomorphism
$\varphi : X_{E}\rightarrow Y'_{E}$ yielding a $1$-cocycle $b$ as above. The diagram
\begin{eqnarray*}
   \xymatrix{
    X_{E} \ar[r]^{\psi} \ar[d]_{\alpha} & Y_{E} \ar[d]^{\phi' = \phi\times\rm id} \\ 
    X_{E} \ar[r]_{\varphi} & Y'_{E}}
\end{eqnarray*}
is commutative, where $\alpha\in \rm Aut_{\it\mathbf{G}_{E}}(\it X_{E}\rm )$ and $\phi'$ is the extension $\phi$. 
Since for any $g\in \mathfrak{G}_{E/k}$ we have 
$g(\phi') = \phi'$, it follows that
\begin{eqnarray*}
b_{g} =  \alpha\circ a_{g}\circ g\left(\alpha^{-1}\right).
\end{eqnarray*} 
In this case we say that the cocycles $a$ and $b$ are \em cohomologous. \rm
We obtain as well a map $\Phi$ between the pointed set of 
isomorphism classes of $E/k$-forms of $X$ and the pointed 
set 
\begin{eqnarray*}
H^{1}(E/k, \rm Aut_{\it\mathbf{G}_{E}}(\it X_{E}\rm ))
\end{eqnarray*}
of cohomology classes of $1$-cocycles $a : \mathfrak{G}_{E/k}
\rightarrow \rm Aut_{\it\mathbf{G}_{E}}(\it X_{E}\rm )$.

Conversely, starting with a cocycle $a$ the map
\begin{eqnarray*}
\mathfrak{G}_{E/k} \rightarrow \rm Aut_{\it\mathbf{G}_{E}}(\it X_{E}\rm ),\,\,\,
\it g\mapsto a_{g}\circ g
\end{eqnarray*}
is a semi-linear action on $X_{E}$ respecting the $\mathbf{G}_{E}$-action. 
According to Lemma $5.6$ one can obtain an 
$E/k$-form $W$ of $X$ by taking
the quotient $X_{E}/\mathfrak{G}_{E/k}$. Changing $a$ by a cohomologous 
$1$-cocycle gives an $E/k$-form of $X$ isomorphic to $W$. Thus we
deduce that the map $\Phi$ is bijective. 

Moreover, let $\gamma$ be a semi-linear
$\mathfrak{G}_{E/k}$-action on $X_{E}$. Remark that
\begin{eqnarray*}
   \xymatrix{
    X_{E} \ar[r]^{\gamma(g')} \ar[d]_{g^{-1}} & X_{E}\ar[d]^{g^{-1}} \\ 
    X_{E} \ar[r]_{g(\gamma(g'))} & X_{E}}
\end{eqnarray*}
commutes for all $g,g'\in \mathfrak{G}_{E/k}$. Hence the 
equality $a_{g} = \gamma(g)\circ g^{-1}$
defines a $1$-cocycle $a$. A straightforward verification shows that 
$H^{1}(E/k, \rm Aut_{\it\mathbf{G}_{E}}(\it X_{E}\rm ))$ is also in bijection
with the pointed set of conjugacy classes of semi-linear $\mathfrak{G}_{E/k}$-actions
on $X_{E}$ respecting the $\mathbf{G}_{E}$-action.
\end{rappel}
As explained in the above paragraph, classifying the pointed set of $E/k$-forms of $X$
is equivalent to classifying all possible semi-linear $\mathfrak{G}_{E/k}$-actions
on $X_{E}$. Thus generalizing the notion of proper polyhedral divisors, we consider the combinatorial
counterpart of this classification.
\begin{definition}
Let $C$ be a regular curve over $E$ and let $\sigma\subset N_{\mathbb{Q}}$ be a 
strongly convex cone.
A \em $\mathfrak{G}_{E/k}$-invariant
$\sigma$-polyhedral divisor \rm over $C$ is a $4$-tuple $(\mathfrak{D},\mathfrak{F},\star,\cdot)$
verifying the following conditions.
\begin{enumerate}
\item[\rm (i)] $\mathfrak{D}$ is a proper 
$\sigma$-polyhedral divisor over $C$.
\item[\rm (ii)] The curve $C$ is endowed with a semi-linear $\mathfrak{G}_{E/k}$-action
\begin{eqnarray*}
\mathfrak{G}_{E/k}\times C\rightarrow C, \,\,\, (g,z)\mapsto g\star z.
\end{eqnarray*}
This yields naturally an action on the space of Weil $\mathbb{Q}$-divisors
over $C$. More precisely, given $g\in\mathfrak{G}_{E/k}$ and a $\mathbb{Q}$-divisor $D$
over $C$ we let 
\begin{eqnarray*}
g\star D = \sum_{z\in C}a_{g^{-1}\star z}\cdot z,\,\,\,\rm where\,\,\,\it D = \sum_{z\in C}a_{z}\cdot z.
\end{eqnarray*}
\item[(iii)] The lattice $M$ is endowed with a linear $\mathfrak{G}_{E/k}$-action
\begin{eqnarray*}
\mathfrak{G}_{E/k}\times M\rightarrow M,\,\,\, (g,m)\mapsto g\cdot m
\end{eqnarray*}
preserving the subset $\sigma^{\vee}_{M}$. 
\item[(iv)]Moreover, there is a map $g\mapsto f_{g},\,\, \mathfrak{G}_{E/k}\rightarrow {\rm Hom}(M, E(C)^{\star})$ satisfying $$f_{gh}(m) = g(f_{h}(m))f_{g}(h\cdot m)$$
for all $g, h\in \mathfrak{G}_{E/k}$ and $m\in M$. In addition, we ask that
$$g\star(\mathfrak{D}(m)) = \mathfrak{F}_{g}(m) + \mathfrak{D}(g\cdot m)\text{ for all }g\in \mathfrak{G}_{E/k}\text{ and } m\in\sigma^{\vee}_{M}, $$
$$\text{ where } \mathfrak{F}_{g}(m) := {\rm div}(f_{g}(m)).$$
We see $\mathfrak{F}_{g}$ as a principal polyhedral divisor and $\mathfrak{F}$ denotes the map $g\mapsto \mathfrak{F}_{g}$.
\end{enumerate}
\end{definition}
The following result allows to simplify the Galois invariant polyhedral divisor description in a particular case.
\begin{lemme}
Let $E_{0}/K_{0}$ be a finite Galois extension with Galois group $\mathfrak{G}_{E_{0}/K_{0}}$.
Assume that $\mathfrak{G}_{E_{0}/K_{0}}$ acts linearly on $M$.
For any $g\in \mathfrak{G}_{E_{0}/K_{0}}$ consider a morphism of groups 
$f_{g}: M\rightarrow E_{0}^{\star}$ satisfying the equalities
\begin{eqnarray*}
f_{gh}(m) = g\left(f_{h}(m)\right)f_{g}(h\cdot m),
\end{eqnarray*}
where $g,h\in\mathfrak{G}_{E_{0}/K_{0}}$ and $m\in M$. Suppose that the torus $T/\mathfrak{G}_{E_{0}/K_{0}}$ is quasi-split, where  $T$ is the torus ${\rm Hom}(M, E_{0}^{\star})$ equipped with the natural $\mathfrak{G}_{E_{0}/K_{0}}$-action $($i.e., we ask that $H^{1}(E_{0}/K_{0}, T) = 1)$.
Then there exists a morphism of groups $b : M\rightarrow E_{0}^{\star}$
such that for all $g\in\mathfrak{G}_{E_{0}/K_{0}}$, $m\in M$ we have
\begin{eqnarray*}
f_{g}(m) = b(g\cdot m)g(b(m))^{-1}\,.
\end{eqnarray*}
\end{lemme}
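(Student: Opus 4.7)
The plan is to encode the cocycle $(f_g)$ as a homogeneous semi-linear $\mathfrak{S}_{E_0/K_0}$-action on the group algebra $E_0[M]$ and then to trivialize that action via the Speiser/Hilbert 90 argument already used in paragraph $5.3$ and Lemma $5.6$. First I would define, for each $g \in \mathfrak{S}_{E_0/K_0}$, the $K_0$-linear map
$$\Phi_g : E_0[M] \to E_0[M], \qquad \Phi_g(\lambda \chi^m) = f_g(m)\, g(\lambda)\, \chi^{g\cdot m},$$
and check by a direct calculation that the given cocycle identity is equivalent to the associativity $\Phi_g\circ \Phi_h = \Phi_{gh}$. Setting $g=h=1$ in the cocycle relation forces $f_1(m)=1$, so $\Phi_1=\mathrm{id}$ and $\Phi$ is a genuine homogeneous semi-linear action in the sense of Definition $5.5$.

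Next I would compare $\Phi$ to the untwisted action $\Phi^0_g(\lambda\chi^m) = g(\lambda)\chi^{g\cdot m}$ from paragraph $5.3$. The conclusion of the lemma amounts to the existence of an $M$-homogeneous $E_0$-algebra automorphism $\Theta$ of $E_0[M]$ intertwining the two actions, i.e.\ $\Theta\circ\Phi_g = \Phi^0_g\circ\Theta$ for all $g$. Any such $\Theta$ must send each $\chi^m$ into an invertible scalar multiple of itself, so it has the shape $\Theta(\chi^m) = b(m)^{-1}\chi^m$ for some group homomorphism $b: M \to E_0^\star$, and a direct expansion of the intertwining relation on a Laurent monomial $\chi^m$ yields precisely the desired identity
$$f_g(m) = b(g\cdot m)\, g(b(m))^{-1}.$$
Thus the problem reduces to the existence of $\Theta$.

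To obtain $\Theta$ I would apply Speiser's Lemma to the semi-linear action $\Phi$, which gives a $K_0$-subalgebra $R = E_0[M]^{\Phi}$ of invariants satisfying $E_0\otimes_{K_0} R \simeq E_0[M]$ canonically; the same lemma applied to $\Phi^0$ gives another descent $R_0$. Both $R$ and $R_0$ become $M$-graded $K_0$-forms of $E_0[M]$, and producing $\Theta$ amounts to producing a grading-preserving $K_0$-algebra isomorphism $R\to R_0$ which becomes the identity after extending scalars to $E_0$. This in turn reduces, after choosing a $\mathbb{Z}$-basis of $M$ and decomposing the action of $\mathfrak{S}_{E_0/K_0}$ on $M$ into orbits, to solving a family of standard coboundary equations in $E_0^\star$, which are solvable by the classical Hilbert's Theorem 90 $H^1(\mathfrak{S}_{E_0/K_0}, E_0^\star) = 1$.

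The main obstacle is the last step: producing $\Theta$ compatibly across the whole lattice $M$. Each coordinate of $\Theta$, viewed relative to a basis of $M$, is produced by a separate invocation of Hilbert 90, but the nontrivial $\mathfrak{S}_{E_0/K_0}$-action on $M$ couples these coordinates together. The delicate point is to organize the induction so that the values $b(e_i) \in E_0^\star$ chosen on basis vectors extend to a \emph{single} group homomorphism $b: M\to E_0^\star$ satisfying the required coboundary equations simultaneously; the descent data furnished by $R$ is exactly what allows the individual Hilbert 90 solutions to be glued in a Galois-equivariant way.
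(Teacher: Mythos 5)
Your reduction steps are correct, and they are in substance the same reformulation the paper uses: verifying $\Phi_g\circ\Phi_h=\Phi_{gh}$ and unwinding the intertwining relation $\Theta\circ\Phi_g=\Phi^0_g\circ\Theta$ with $\Theta(\chi^m)=b(m)^{-1}\chi^m$ is exactly equivalent to viewing $g\mapsto\bigl(m\mapsto f_g(m)\bigr)$ as a $1$-cocycle with values in $T=\operatorname{Hom}(M,E_0^{\star})$, for the action twisted simultaneously by the Galois action on $E_0^{\star}$ and by the action on $M$; the paper does this directly (via the opposite group and $a_g(m)=g^{-1}(f_g(m))$). Up to that point the two arguments coincide.

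The gap is your final step, and it is not a matter of organizing the induction. What you need is the vanishing of $H^1\bigl(\mathfrak{S}_{E_0/K_0},\operatorname{Hom}(M,E_0^{\star})\bigr)$ for the twisted action, i.e.\ the degree-one Galois cohomology of a $K_0$-torus which is split over $E_0$ but, when the action on $M$ is nontrivial, in general not over $K_0$. Rank-one Hilbert 90, $H^1(\mathfrak{S}_{E_0/K_0},E_0^{\star})=1$, yields this only when the coordinates decouple, i.e.\ when $\mathfrak{S}_{E_0/K_0}$ permutes a $\mathbb{Z}$-basis of $M$ (the quasi-trivial case); no gluing scheme can remove this restriction, because the statement you would be gluing to is false in general. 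Concretely, take $E_0/K_0=\mathbb{C}/\mathbb{R}$, $M=\mathbb{Z}$, the nontrivial element $g$ acting by $g\cdot m=-m$, and $f_g(m)=(-1)^m$, $f_1\equiv 1$: the cocycle identity of the lemma holds, yet $b(g\cdot m)\,g(b(m))^{-1}=b(m)^{-1}\overline{b(m)}^{-1}=|b(m)|^{-2}$ is always positive, so no homomorphism $b$ can work; this is the nontrivial class in $H^1(\mathbb{R},T)\simeq\mathbb{R}^{\star}/N_{\mathbb{C}/\mathbb{R}}(\mathbb{C}^{\star})$ for the norm-one torus. Note that the paper's own proof does not obtain the vanishing from rank-one Hilbert 90 either: at exactly this spot it asserts $H^1(E_0/K_0,T)=1$ for the torus $T=\operatorname{Hom}(M,E_0^{\star})$ in one stroke. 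So the input your plan is missing is precisely the input the paper postulates, and it is only available under additional hypotheses on the $\mathfrak{S}_{E_0/K_0}$-action on $M$ (for instance, permutation of a lattice basis) or on the class of cocycles considered; as written, your last paragraph papers over the only genuinely nontrivial point of the lemma.
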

\begin{proof}
The opposite of $\mathfrak{G}_{E_{0}/K_{0}}$ is the group $H$ 
with underlying set $\mathfrak{G}_{E_{0}/K_{0}}$ and the
multiplication law defined by $g\star h = hg$,
where $g,h\in H$. For $g\in H$ we denote
by $a_{g} : M\rightarrow E_{0}^{\star}$ the morphism of groups defined by 
\begin{eqnarray*}
a_{g}(m) = g^{-1}(f_{g}(m)),
\end{eqnarray*} 
where $m\in M$. We can also define an $H$-action by group automorphisms
on the abelian group
\begin{eqnarray*}
T = \rm Hom(\it M,E_{\rm 0}^{\star})
\end{eqnarray*}
over $E_{0}$ by letting $(g\cdot\alpha)(m) = g^{-1}(\alpha(g\cdot m))$,
where $\alpha\in T$, $g\in H$, and $m\in M$. Considering
$g,h\in H$ we obtain
\begin{eqnarray*}
a_{h\star g}(m) = (gh)^{-1}(f_{gh}(m)) = (gh)^{-1}(g(f_{h}(m))f_{g}(h\cdot m))
 = a_{h}(m)(h\cdot a_{g})(m)
\end{eqnarray*}
so that $g\mapsto a_{g}$ is a $1$-cocycle.
One has 
\begin{eqnarray*}
H^{1}(H,T) \simeq H^{1}(E_{0}/K_{0}, T)  = 1.
\end{eqnarray*}
Hence there exists $b\in T$ such that for any $g\in H$ 
we have $a_{g} = b \cdot (g\cdot b^{-1})$. The latter equalities 
imply the result.
\end{proof}
The next theorem yields a classification of affine $\mathbf{G}$-varieties of complexity one in
terms of invariant polyhedral divisors. 
\begin{theorem}
Let $\mathbf{G}$ be a torus over $k$ splitting in a finite Galois extension $E/k$.
Denote by $\mathfrak{G}_{E/k}$ the Galois group of $E/k$.
\begin{enumerate}
\item[\rm (i)] Every affine $\mathbf{G}$-variety of complexity one 
splitting in $E/k$ is described by a $\mathfrak{G}_{E/k}$-invariant proper 
polyhedral divisor over a regular curve. 
\item[\rm (ii)] Conversely, let $C$ be a regular curve over $E$. For a $\mathfrak{G}_{E/k}$-invariant 
proper $\sigma$-polyhedral divisor $(\mathfrak{D},\mathfrak{F},\star,\cdot)$ over $C$ one can endow
the algebra $A[C,\mathfrak{D}]$ with a homogeneous semi-linear $\mathfrak{G}_{E/k}$-action and associate
an affine $\mathbf{G}$-variety of complexity one over $k$ splitting in $E/k$
by letting $X = \rm Spec\,\it A$, where
\begin{eqnarray*}
A = A[C,\mathfrak{D}]^{\mathfrak{G}_{E/k}}.
\end{eqnarray*}
 
\end{enumerate}
\end{theorem}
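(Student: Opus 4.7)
The plan is to reduce part (i) to the split case (Theorem $4.3$) by extending scalars to $E$, and then to translate the induced Galois action on $E[X(E)]$ into the combinatorial data $(\mathfrak{D},\mathfrak{F},\star,\cdot)$, using Lemma $5.9$ to absorb a cocycle into a principal polyhedral divisor. Part (ii) runs the construction in reverse and appeals to Lemma $5.6$.

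For part (i), let $X = \mathrm{Spec}\, R$ be an affine $\mathbf{G}$-variety of complexity one over $\mathbf{k}$ splitting in $E/\mathbf{k}$. The algebra $B := R\otimes_{\mathbf{k}} E = E[X(E)]$ is a normal finitely generated $M$-graded $E$-algebra of complexity one, so Theorem $4.3$ produces a smooth curve $C$ over $E$ and a proper $\sigma$-polyhedral divisor $\mathfrak{D}$ on $C$ with $B\simeq A[C,\mathfrak{D}]$ as $M$-graded $E$-algebras. By $5.3$ and $5.5$ the Galois group $\mathfrak{S}_{E/\mathbf{k}}$ acts semi-linearly and homogeneously on $B$, with $R = B^{\mathfrak{S}_{E/\mathbf{k}}}$ by Speiser's Lemma.

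Each $\psi_g$ permutes the homogeneous components, inducing a linear action $m\mapsto g\cdot m$ of $\mathfrak{S}_{E/\mathbf{k}}$ on $M$ preserving $\sigma^\vee_M$. The restriction of $\psi_g$ to the degree-zero piece extends to a semi-linear action on $K_0 = E(C)$, which corresponds to a semi-linear action $\star$ on $C$ (via the identification of $C$ with $\mathscr{R}_E K_0$ in the elliptic case, or with $\mathrm{Spec}\, A_0$ in the non-elliptic case). Embedding $B\hookrightarrow K_0[M]$, there is a unique $\varphi_g(m)\in K_0^\times$ with $\psi_g(\chi^m) = \varphi_g(m)\chi^{g\cdot m}$, so that $\psi_g(f\chi^m) = g(f)\varphi_g(m)\chi^{g\cdot m}$. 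The condition that $\psi_g$ preserves $A[C,\mathfrak{D}]$ gives the divisor identity
\begin{eqnarray*}
\mathrm{div}_C(\varphi_g(m)) = g\star \mathfrak{D}(m) - \mathfrak{D}(g\cdot m),
\end{eqnarray*}
multiplicativity makes $\varphi_g\colon M\to K_0^\times$ a group homomorphism, and $\psi_{gh} = \psi_g\circ\psi_h$ yields the Galois cocycle identity of Lemma $5.9$. Applying that lemma produces $b\colon M\to K_0^\times$ with $\varphi_g(m) = b(g\cdot m)g(b(m))^{-1}$; setting $\mathfrak{F}(m) := \mathrm{div}_C(b(m))$ defines a principal $\sigma$-polyhedral divisor for which the displayed divisor identity becomes equivalent to the invariance condition of Definition $5.8$.

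For part (ii), given $(\mathfrak{D},\mathfrak{F},\star,\cdot)$ with $\mathfrak{F}(m) = \mathrm{div}_C(b(m))$, define $\psi_g(f\chi^m) := g(f)b(g\cdot m)g(b(m))^{-1}\chi^{g\cdot m}$. The invariance condition ensures $\psi_g$ preserves $A[C,\mathfrak{D}]$, multiplicativity and the cocycle identity for $b$ show that $\psi$ is a homogeneous semi-linear $\mathfrak{S}_{E/\mathbf{k}}$-action, and Lemma $5.6$ produces the affine $\mathbf{G}$-variety $X := \mathrm{Spec}\, A[C,\mathfrak{D}]^{\mathfrak{S}_{E/\mathbf{k}}}$, whose complexity is one since this can be checked after extension to $E$. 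The main obstacle lies in part (i): one must establish that the induced semi-linear action on $B$ descends to a semi-linear action on the curve $C$ compatible with its scheme structure, so that the cocycle $\varphi_g$ takes values in $K_0^\times$ in a geometrically meaningful way; once this is in hand, the application of Hilbert $90$ through Lemma $5.9$ to package the cocycle into $\mathfrak{F}$ is a formal maneuver.
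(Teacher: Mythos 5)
Your proposal is correct and follows essentially the same route as the paper: base change to $E$, apply Theorem $4.3$, read off the homogeneous semi-linear action as a cocycle $g\mapsto f_g\in\rm Hom\it (M,K_{\rm 0}^{\star})$, trivialize it via Lemma $5.9$ to get $\mathfrak{F}$, check the invariance identity, and reverse the construction with Lemma $5.6$ for part (ii). The only difference is cosmetic: where you assert the divisor identity $\rm div\it\,\varphi_{g}(m)=g\star\mathfrak{D}(m)-\mathfrak{D}(g\cdot m)$ directly from preservation of $A[C,\mathfrak{D}]$, the paper derives it as a two-sided inequality (for $g$ and $g^{-1}$) using the uniqueness statements of Corollary $1.7$ and Corollary $3.8\,\rm (iii)$, which is the justification your step implicitly needs.
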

\begin{proof}
$\rm (i)$ Let $X$ be an affine $\mathbf{G}$-variety of complexity one over $k$.
According to Theorem $4.3$ we may suppose that $B = A[C,\mathfrak{D}]$ is the coordinate ring
of $X_{E}$ for some proper $\sigma$-polyhedral divisor $\mathfrak{D}$ over a regular curve $C$. The algebra
$B$ is endowed with a homogeneous semi-linear $\mathfrak{G}_{E/k}$-action. 
Let $E_{0} = E(C)$. Extending this action on $E_{0}[M]$ we notice that $E_{0}$ and $E[C]$ are 
preserved. Thus we obtain a semi-linear $\mathfrak{G}_{E/k}$-action
on $C$. If $C$ is projective, then one defines the $\mathfrak{G}_{E/k}$-action 
on $C$ in the following way ; given
a place $P\subset E_{0}$ we let 
\begin{eqnarray*}
g\star P = \{g\star f\,|\, f\in P\}.
\end{eqnarray*}
In the case where $C$ is arbitrary, the Speiser Lemma gives the equality
\begin{eqnarray*}
E_{0} = E\cdot K_{0},\,\,\, \rm where\,\,\,\it  K_{\rm 0} = \it E_{\rm 0\it }^{\it \mathfrak{G}_{E/k}}.
\end{eqnarray*}
The finite extension $E_{0}/K_{0}$ is Galois. We have a natural identification 
$\mathfrak{G}_{E/k}\simeq\mathfrak{G}_{E_{0}/K_{0}}$
with the Galois group of $E_{0}/K_{0}$. For all $m\in M$, $g\in\mathfrak{G}_{E/k}$
we have 
\begin{eqnarray}
g\cdot \left(f\chi^{m}\right) = g(f)f_{g}(m)\chi^{\Gamma(g,m)}
\end{eqnarray}
for some some element $f_{g}$ of the abelian group 
$T = \rm Hom(\it M,E_{\rm 0}^{\star})$ and some $\Gamma(g,m)\in M$.
We observe that $\Gamma$ is a linear action on $M$. Denote by $g\cdot m$ the 
lattice vector $\Gamma(g,m)$.
For all $g,h\in\mathfrak{G}_{E/k}$ we have
\begin{eqnarray*}
f_{gh}(m)\chi^{m} = gh\cdot\chi^{m} = g\cdot(h\cdot\chi^{m}) = g(f_{h}(m))f_{g}(h\cdot m)\chi^{gh\cdot m}.
\end{eqnarray*}
First of all, we remark that if $f\in E_{0}^{\star}$ and $g\in\mathfrak{G}_{E/k}$,
then $g\star\rm div\,\it f = \rm div\,\it g(f)$. Let $f\chi^{m}\in B$ be homogeneous of degree $m$. 
The transformation of $f\chi^{m}$ by $g$ is an element of $B$ of degree $g\cdot m$ and so
\begin{eqnarray*}
\rm div\it\, g(f)f_{g}(m) + \mathfrak{D}(g\cdot m)\rm \geq 0.
\end{eqnarray*}
This implies that
\begin{eqnarray*}
g\star\left(-\rm div\,\it f \right)\rm \leq \it
\mathfrak{F}_{g}(m) + \mathfrak{D}(g\cdot m).
\end{eqnarray*}
According to Lemma $1.7$ and Corollary $3.8\,\rm (iii)$ we obtain
\begin{eqnarray*}
 g\star \mathfrak{D}(m)\leq  \mathfrak{F}_{g}(m) + \mathfrak{D}(g\cdot m).
\end{eqnarray*}
The converse inequality uses a similar argument. One concludes that $(\mathfrak{D},\mathfrak{F},\star, \cdot)$
is an invariant polyhedral divisor.

$\rm (ii)$ One defines a 
homogeneous semi-linear $\mathfrak{G}_{E/k}$-action on $A[C,\mathfrak{D}]$
by the equality $(2)$. 
The rest of the proof is a consequence of Lemma $5.6$.
\end{proof}
Let us provide the following elementary example.
\begin{exemple}
Consider the $\sigma$-polyhedral divisor $\mathfrak{D}$ over $\mathbb{A}^{1}_{\mathbb{C}} = \rm Spec\,\it\mathbb{C}[t]$
defined by
\begin{eqnarray*}
((1,0) + \sigma)\cdot \zeta + ((0,1) + \sigma)\cdot (-\zeta) + ((1,-1)+\sigma)\cdot 0,
\end{eqnarray*}
where $\sigma$ is the first quadrant $\mathbb{Q}_{\geq 0}^{2}$ and $\zeta = \sqrt{-1}$.
We endow $\mathfrak{D}$ with a structure
of $\mathfrak{G}_{\mathbb{C}/\mathbb{R}}$-invariant polyhedral divisors by considering $\mathfrak{F}$
induced by the morphism $(m_{1},m_{2})\mapsto t^{m_{2}-m_{1}}$. We have a $\mathfrak{G}_{\mathbb{C}/\mathbb{R}}$-action
\begin{eqnarray*}
\mathfrak{G}_{\mathbb{C}/\mathbb{R}}\rightarrow \rm GL_{2}(\it\mathbb{Z}),\,\,\, g\mapsto \begin{pmatrix}
   0 & 1 \\
   1 & 0 
\end{pmatrix}
\end{eqnarray*}
on the lattice $\mathbb{Z}^{2}$, 
where $g$ is the generator of $\mathfrak{G}_{\mathbb{C}/\mathbb{R}}$. The algebra $\mathbb{C}[t]$ has
the natural complex conjugacy action $\star$ of $\mathfrak{G}_{\mathbb{C}/\mathbb{R}}$. A direct computation
shows that
\begin{eqnarray*}
A = \mathbb{C}\left[t,\frac{1}{t(t-\zeta)}\chi^{(1,0)},\frac{t}{t+\zeta}\chi^{(0,1)}\right]
\end{eqnarray*}
and so $X = \rm Spec \,\it A$ is the affine space $\mathbb{A}^{3}_{\mathbb{C}}$. More concretely,
the $\mathfrak{G}_{\mathbb{C}/\mathbb{R}}$-action on the algebra $A$ is obtained by
\begin{eqnarray*}
g\cdot (f(t)\chi^{(m_{1},m_{2})}) = \bar{f(t)}t^{2(m_{1}-m_{2})}\chi^{(m_{2},m_{1})}. 
\end{eqnarray*}
Letting $x = t^{-1}(1-\zeta)^{-1}\chi^{(1,0)}$ and $y = t(1+\zeta)^{-1}\chi^{(0,1)}$ we obtain
that $A^{\mathfrak{G}_{\mathbb{C}/\mathbb{R}}} = \mathbb{R}[t, x+y, \zeta(x-y)]$.
Hence $X/\mathfrak{G}_{\mathbb{C}/\mathbb{R}}\simeq \mathbb{A}^{3}_{\mathbb{R}}$.
\end{exemple}
Next we describe the pointed set of $E/k$-forms of an affine
$\mathbf{G}$-varieties of complexity one in terms of polyhedral divisors.
\begin{definition}
The invariant $\sigma$-polyhedral divisors $(\mathfrak{D},\mathfrak{F},\star,\cdot)$ and 
$(\mathfrak{D},\mathfrak{F}',\star',\cdot')$ over $C$ are \em conjugated \rm if they verify the following:
there exist $\varphi\in \rm Aut(\it C)$, a principal $\sigma$-polyhedral divisor
$\mathfrak{E}$ over $C$, and a linear automorphism $F\in\rm Aut(\it M)$
giving an automorphism of the $E$-algebra $A[C,\mathfrak{D}]$ (see $4.5$) 
such that for any $g\in\mathfrak{G}_{E/k}$ the diagrams 
\begin{eqnarray*}
  \xymatrix{
    C \ar[r]^{g\star} \ar[d]_{\varphi} & C \ar[d]^{\varphi} \\ 
     C \ar[r]_{g\star'} & C} 
\rm\,\,\, and\,\,\,\it
  \xymatrix{
    M \ar[r]^{g\cdot} \ar[d]_{F} & M \ar[d]^{F} \\ 
     M \ar[r]_{g\cdot'} & M}
\end{eqnarray*}
commute and for any $m\in M$ we have
\begin{eqnarray*}
\mathfrak{E}(g\star m)\cdot \varphi^{\star}(f_{g}(m)) = g(\mathfrak{E}(m))\cdot f_{g}'(F(m)).
\end{eqnarray*}
Consider an affine $\mathbf{G}$-variety $X$ of complexity one described by the invariant polyhedral
divisor $(\mathfrak{D},\mathfrak{F},\star,\cdot)$. 
We denote by $\mathscr{E}_{X}(E/k)$ the pointed set of conjugacy classes of $\mathfrak{G}_{E/k}$-invariant
$\sigma$-polyhedral divisors over $C$ of the form $(\mathfrak{D},\mathfrak{F}',\star',\cdot')$.
\end{definition}
As a direct consequence of the discussion of $5.7$ we obtain the following. 
\begin{corollaire}
Let $C$ be a regular curve over $E$. 
Given an affine $\mathbf{G}$-variety $X$ of complexity one
associated to a $\mathfrak{G}_{E/k}$-invariant polyhedral
divisor $(\mathfrak{D},\mathfrak{F},\star,\cdot)$ over $C$, we have 
a bijection of pointed sets
\begin{eqnarray*}
\mathscr{E}_{X}(E/k)\simeq H^{1}(E/k, \rm Aut_{\it\mathbf{G}_{E}}(\it X_{E}\rm )).
\end{eqnarray*}
\end{corollaire}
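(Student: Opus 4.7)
The plan is to assemble the bijection by composing three correspondences already provided in the paper: the bijection in paragraph $5.7$ between $H^{1}(E/\mathbf{k},\mathrm{Aut}_{\mathbf{G}(E)}(X(E)))$ and the conjugacy classes of semi-linear $\mathfrak{S}_{E/\mathbf{k}}$-actions on $X(E)$ respecting the $\mathbf{G}(E)$-action, the bijection from Theorem $5.10$ between such semi-linear actions on $X(E) = \mathrm{Spec}\,A[C,\mathfrak{D}]$ and $\mathfrak{S}_{E/\mathbf{k}}$-invariant proper polyhedral divisors of the form $(\mathfrak{D},\mathfrak{F}',\star',\cdot')$, and finally Proposition $4.5$ which describes the group $\mathrm{Aut}_{\mathbf{G}(E)}(X(E))$ in terms of triples $(\varphi,F,\mathfrak{E})$ where $\varphi\in\mathrm{Aut}(C)$, $F\in\mathrm{Aut}(M)$ preserves $\sigma^{\vee}_{M}$, and $\mathfrak{E}$ is a principal $\sigma$-polyhedral divisor satisfying $\varphi^{\star}\mathfrak{D}(m) = \mathfrak{D}(F(m)) + \mathfrak{E}(m)$.

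First, fix the base point of $\mathscr{E}_{X}(E/\mathbf{k})$ as the class of $(\mathfrak{D},\mathfrak{F},\star,\cdot)$. Given any other invariant polyhedral divisor of the form $(\mathfrak{D},\mathfrak{F}',\star',\cdot')$, Theorem $5.10\,\mathrm{(ii)}$ produces a homogeneous semi-linear $\mathfrak{S}_{E/\mathbf{k}}$-action $\gamma'$ on $A[C,\mathfrak{D}]$ via the formula $g\cdot(f\chi^{m}) = g\star'\!f \cdot f'_{g}(m)\chi^{g\cdot' m}$, where $f'_{g}(m) = b'(g\cdot'm)/g\star'b'(m)$ and $b'$ encodes $\mathfrak{F}'$. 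By paragraph $5.7$, this semi-linear action yields a $1$-cocycle $a_{g} = \gamma'(g)\circ\gamma(g)^{-1}$ in $\mathrm{Aut}_{\mathbf{G}(E)}(X(E))$, where $\gamma$ is the original semi-linear action attached to $(\mathfrak{D},\mathfrak{F},\star,\cdot)$. This defines a map $\Psi:\mathscr{E}_{X}(E/\mathbf{k})\to H^{1}(E/\mathbf{k},\mathrm{Aut}_{\mathbf{G}(E)}(X(E)))$, and Theorem $5.10\,\mathrm{(i)}$ together with $5.7$ show that $\Psi$ is surjective (every cocycle $a$ gives a new semi-linear action $g\mapsto a_{g}\circ\gamma(g)$, and Theorem $5.10\,\mathrm{(i)}$ translates it back to an invariant polyhedral divisor with the same underlying $\mathfrak{D}$).

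The core of the proof is to verify that $\Psi$ is well-defined and injective: two invariant divisors $(\mathfrak{D},\mathfrak{F}',\star',\cdot')$ and $(\mathfrak{D},\mathfrak{F}'',\star'',\cdot'')$ are conjugated in the sense of Definition $5.12$ if and only if the associated cocycles $a,b$ are cohomologous, i.e.\ $b_{g} = \alpha\circ a_{g}\circ g(\alpha^{-1})$ for some $\alpha\in\mathrm{Aut}_{\mathbf{G}(E)}(X(E))$. By Proposition $4.5$, an equivariant $\mathbf{G}(E)$-automorphism $\alpha$ of $X(E)$ corresponds exactly to a triple $(\varphi,F,\mathfrak{E})$. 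A direct but careful computation then compares the two formulas
\begin{equation*}
b_{g}(\chi^{m}) \;=\; \alpha\!\left(a_{g}(\alpha^{-1}(\chi^{m}))\right)
\end{equation*}
and the identity written in Definition $5.12$. On the character side, one gets the compatibility diagram relating $g\cdot$, $g\cdot'$, and $F$; on the curve side, one gets the compatibility between $g\star$, $g\star'$, and $\varphi$; on the rational function side, after expressing the cocycles via the morphisms $b',b'':M\to E_{0}^{\star}$ encoding $\mathfrak{F}',\mathfrak{F}''$, the quotient of cocycles translates precisely into the ratio condition
\begin{equation*}
\frac{\mathfrak{F}(g\cdot m)}{g\star\mathfrak{F}(m)} \;=\;
\frac{g\star(\varphi^{-1})^{\star}\mathfrak{E}(m)\cdot (\varphi^{-1})^{\star}\mathfrak{F}'(F(g\cdot m))}
{\mathfrak{E}(g\cdot m)\cdot g\star(\varphi^{-1})^{\star}\mathfrak{F}'(F(m))}.
\end{equation*}

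The main obstacle is exactly this last bookkeeping step: unwinding the definitions of $\gamma',\gamma''$, $a,b$, and the action of a triple $(\varphi,F,\mathfrak{E})$ on homogeneous elements $f\chi^{m}$ of $A[C,\mathfrak{D}]$, and checking that the cocycle identity $b_{g} = \alpha\circ a_{g}\circ g(\alpha^{-1})$ is equivalent, character by character, to the ratio formula above (together with the two commutative squares for $\varphi$ and $F$). Once this translation is carried out in one direction, the reverse implication is obtained by the same formulas read backwards, and the uniqueness of $\mathfrak{E}$ up to principal divisors (Lemma $3.7\,\mathrm{(ii)}$, Theorem $1.4$) ensures that the correspondence descends to conjugacy classes, yielding the claimed bijection of pointed sets.
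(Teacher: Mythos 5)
Your proposal follows the paper's own route: the paper treats this corollary as a direct consequence of the discussion in $5.7$ (the bijection between $H^{1}(E/\mathbf{k},\mathrm{Aut}_{\mathbf{G}(E)}(X(E)))$ and conjugacy classes of semi-linear $\mathfrak{S}_{E/\mathbf{k}}$-actions respecting the $\mathbf{G}(E)$-action), combined with the dictionary of Theorem $5.10$ between such actions and invariant polyhedral divisors and with the description of equivariant automorphisms in Proposition $4.5$, on which Definition $5.12$ is modeled. Your expansion—checking that cohomologous cocycles correspond exactly to conjugated invariant divisors via the triples $(\varphi,F,\mathfrak{E})$—is precisely the (in the paper, unwritten) verification the author intends, so the approach is essentially the same.
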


\end{document}